\documentclass[onefignum,onetabnum]{siamart171218}


\usepackage{amsfonts}
\usepackage{amssymb}
\usepackage{graphicx}
\usepackage{epstopdf}
\usepackage[mathscr]{euscript}
\usepackage{algorithmic}
\ifpdf
  \DeclareGraphicsExtensions{.eps,.pdf,.png,.jpg}
\else
  \DeclareGraphicsExtensions{.eps}
\fi

\usepackage{bm}
\usepackage{tikz-cd}
\usepackage{subfig}
\usepackage{mathtools}
\usepackage{stmaryrd}
\usepackage{accents}
\usepackage{booktabs}

\setlength{\hoffset}{.6in}


\newsiamremark{remark}{Remark}
\newsiamremark{assumption}{Assumption}

\newsiamremark{hypothesis}{Hypothesis}
\crefname{hypothesis}{Hypothesis}{Hypotheses}
\newsiamthm{claim}{Claim}

\makeatletter
\def\widebreve{\mathpalette\wide@breve}
\def\wide@breve#1#2{\sbox\z@{$#1#2$}%
     \mathop{\vbox{\m@th\ialign{##\crcr
\kern0.08em\brevefill#1{0.8\wd\z@}\crcr\noalign{\nointerlineskip}%
                    $\hss#1#2\hss$\crcr}}}\limits}
\def\brevefill#1#2{$\m@th\sbox\tw@{$#1($}%
  \hss\resizebox{#2}{\wd\tw@}{\rotatebox[origin=c]{90}{\upshape(}}\hss$}
\makeatletter

\newcommand{\triplenorm}[1]{\ensuremath{|\!|\!| #1 |\!|\!|}}

\headers{NZT for Surface Biharmonic}{S. Wu and H. Zhou}

\title{A stabilized nonconforming finite element method for the surface
biharmonic problem
\thanks{
This study is supported in part by the National Natural
Science Foundation of China grant No. 12222101 and the Beijing Natural
Science Foundation No. 1232007.}
}

\author{
Shuonan Wu\thanks{School of Mathematical Sciences, Peking University,
Beijing 100871, China} \and 
Hao Zhou\thanks{School of Mathematical Sciences, Peking University,
Beijing 100871, China.} 
}

\usepackage{amsopn}

\ifpdf
\hypersetup{
  pdftitle={NZT surface},
  pdfauthor={S. Wu and H. Zhou}
}
\fi


\begin{document}

\maketitle

\begin{abstract}
This paper presents a novel stabilized nonconforming finite element method for solving the surface biharmonic problem. The method extends the New-Zienkiewicz-type (NZT) element to polyhedral (approximated) surfaces by employing the Piola transform to establish the connection of vertex gradients across adjacent elements. Key features of the surface NZT finite element space include its $H^1$-relative conformity and weak $\bm{H}({\rm div})$ conformity, allowing for stabilization without the use of artificial parameters. Under the assumption that the exact solution and the dual problem possess only $H^3$ regularity, we establish optimal error estimates in the energy norm and provide, for the first time, a comprehensive analysis yielding optimal second-order convergence in the broken $H^1$ norm. Numerical experiments are provided to support the theoretical results.
\end{abstract}

\begin{keywords}
  surface biharmonic problem; surface nonconforming finite element method; New-Zienkiewicz-type element
\end{keywords}

\begin{AMS}
65N12, 65N15, 65N30
\end{AMS}

\section{Introduction} \label{sec:intro}
Fourth-order partial differential equations (PDEs) on surfaces are widely applied in engineering and physics, including in thin shells \cite{chapelle2011finite}, the surface Cahn-Hilliard equation \cite{elliott2015evolving}, the surface Navier-Stokes equations \cite{reusken2020stream}, and biomembranes \cite{elliott2010modeling,bonito2011dynamics}. 
In this paper, we consider the surface biharmonic problem as follows:
\begin{equation} \label{eq:s-biharmonic}
\Delta_\gamma^2 u = f \quad \text{on }\gamma, \quad \int_\gamma u \mathrm{d}\sigma = 0.
\end{equation}
Here, $\gamma \subset \mathbb{R}^3$ is a compact, closed, and orientable two-dimensional surface without boundary, subject to certain smoothness conditions, and $\Delta_\gamma$ denotes the Laplace-Beltrami operator. The source term $f$ satisfies the compatibility condition $\int_\gamma f \, \mathrm{d}\sigma = 0$. More specific assumptions and notation are given in Section \ref{sec:preliminaries}. 

As a widely utilized discrete method for PDEs on surfaces, surface finite element methods (SFEMs) establish a Galerkin method on the polyhedral (or higher-order) approximated surface $\Gamma_h$ of $\gamma$. Numerous studies have explored the SFEMs for second-order Laplace-Beltrami operator, which is discussed in review articles \cite{dziuk2013finite, bonito2020finite} and their references.

Studies utilizing SFEMs to address fourth-order problems include the use of second-order splitting techniques \cite{elliott2019second, stein2019mixed} and the Hellan-Herrmann-Johnson mixed method, which is specifically designed for solving the surface Kirchhoff plate problem \cite{walker2022kirchhoff}. This latter method can also incorporate Gauss curvature terms to tackle surface biharmonic problem.
Additionally, Larsson and Larson \cite{larsson2017continuous} proposed a scheme that combines continuous piecewise quadratic finite elements with an interior penalty formulation to address jumps in the normal component. In \cite{cai2024continuous}, continuous piecewise linear elements are employed to reconstruct gradients at vertices using weighted average methods, resulting in a continuous piecewise linear reconstructed gradient. In the error estimate of this method, the mesh is assumed to satisfy the $\mathcal{O}(h^2)$-symmetry condition \cite{wei2010superconvergence} to obtain the superconvergence property of the reconstructed gradient.

In the variational formulation of \eqref{eq:s-biharmonic}, we require that the function values and the normal components of its gradient are continuous along any curve, that is, they belong to $H^2(\gamma)$. However, achieving such strong conformity on the discrete surface $\Gamma_h$ is generally not possible. This limitation arises because $\Gamma_h$ is often only Lipschitz continuous, and the varying tangential directions of each discrete element prevent the gradient from maintaining continuity across the edges.

On the other hand, for the planar biharmonic problem, it is common to use the Hessian operator in the variational formulation, which incorporates all second-order derivatives, rather than the Laplace operator. The Hessian operator clearly corresponds to a stronger energy norm, enhancing the stability of the method at both the continuous and discrete levels. However, on general surfaces, using the variational formulations of these two operators introduces additional terms related to Gaussian curvature \cite{reusken2020stream}; see also the discussion in Remark \ref{rk:inapplicable}. As a result, directly substituting the surface Hessian for the Laplace-Beltrami operator is not feasible, creating challenges in designing stable numerical methods.

To address these challenges, in this paper, we extend the New-Zienkiewicz-type (NZT) element to surfaces. The NZT element, introduced by Wang, Shi, and Xu in \cite{wang2007new}, is a class of continuous finite elements with degrees of freedom (DoFs) that include vertex values and vertex gradients. On planar domains, the gradient of the NZT finite element space exhibits weak \(\bm{H}({\rm div})\) properties, ensuring convergence for fourth-order problems. For handling tangential gradient DoFs on the discrete surface \(\Gamma_h\), we adopt a recent approach by Demlow and Neilan that provides a novel perspective on vector-valued nodal DoFs \cite{demlow2024tangential} in solving the surface Stokes problem. Specifically, we modify the NZT finite element space using the inter-element Piola transformation introduced in \cite{demlow2024tangential}, which preserves the weak \(\bm{H}({\rm div}_{\Gamma_h}; \Gamma_h)\) properties of the discrete space. Although adjacent elements on the discrete surface cannot share identical vertex gradient values, it can still be shown that the surface NZT finite element space achieves $H^1$-relative conformity. These properties of the discrete space enable the design of a nonconforming scheme with a parameter-free stabilization.

The proposed nonstandard (nonconforming) finite element space presents several analytical challenges. To establish the approximation properties of the surface NZT finite element space, we carefully analyze the behavior of the tangential gradient and its Piola transformation from $\gamma$ to $\Gamma_h$, identifying the essential property in Lemma \ref{lm:Piola-derivative}, which provides an $\mathcal{O}(h^2)$ approximation of the Piola transformation. For the error estimate, it is sufficient to assume that the solution has $H^3$ regularity, rather than requiring $H^4$ regularity \cite{larsson2017continuous, cai2024continuous}. Furthermore, through detailed analysis, we show that in many error terms, replacing the finite element function with an interpolant from $H^3(\gamma)$ yields a higher-order estimate. Based on these analyses, the dual argument allows us to achieve a new second-order error estimate in the broken $H^1$ norm.


This paper is organized as follows: Section \ref{sec:preliminaries} introduces the preliminaries for the surface operator and its discretization, including the surface Piola transform used for both construction and analysis, as well as the planar NZT element. In Section \ref{sec:FEM}, we present the stabilized nonconforming finite element method based on the surface NZT element for solving \eqref{eq:s-biharmonic}, together with the approximation properties of the finite element space and the stability of the numerical scheme. Section \ref{sec:analysis} demonstrates the optimal error estimates in both the energy norm and the broken $H^1$ norm. Finally, numerical results are provided in Section \ref{sec:numerical}.

We shall use $X \lesssim Y$ (resp. $X \gtrsim Y$) to denote $X \leq CY$ (resp. $X \geq CY$), where $C$ is a constant independent of the mesh size $h$. Additionally, $X \simeq Y$ will signify that both $X \lesssim Y$ and $X \gtrsim Y$ hold.


\section{Notation and preliminaries} \label{sec:preliminaries}

In this paper, we assume that the smoothness of the surface $\gamma \subset \mathbb{R}^3$ is of class $C^4$. This assumption is made solely for analytical convenience, as the proposed algorithm can also be applied to surfaces with lower regularity. Furthermore, we assume that $\gamma$ is compact, closed, and orientable. Under these assumptions, there exists a tubular region $U_\delta = \{ x \in \mathbb{R}^3 : \text{dist}(x, \gamma) < \delta \}$ for sufficiently small $\delta > 0$. Within this region, the signed distance function $d$ is well-defined and of class $C^4$, with $d > 0$ indicating points outside the surface $\gamma$. The unit outward normal vector is defined as $\bm{\nu}(x) := \nabla d(x)$, where $\nabla$ denotes the gradient operator in Euclidean space. Additionally, we denote $\textbf{H}(x) := \nabla^2 d(x)$ as the Weingarten map. For sufficiently small $\delta > 0$, and following the results of Gilbarg and Trudinger \cite[Lemma 14.16]{gilbarg1977elliptic}, the closest point projection $\bm{p}: U_\delta \to \gamma$ is well-defined and given by the formula
\begin{equation} \label{eq:p}
\bm{p}(x) := x - d(x) \bm{\nu}(x).
\end{equation}
The tangential projection operator is defined as $\textbf{P} := \textbf{I} - \bm{\nu} \otimes \bm{\nu}$, where $\textbf{I}$ is the $3 \times 3$ identity matrix and $\otimes$ denotes the outer product of two vectors. It follows that the gradient of the projection can be expressed as
$\nabla \bm{p} = \textbf{P} - d \textbf{H}$.

\subsection{Differential operators and function spaces}
For any scalar function $v$ on $\gamma$, its extension is given as $v^e := v \circ \bm{p}$, which is well-defined on $U_\delta$. Then, the tangential gradient of $v$ on $\gamma$ is given by 
$$
\nabla_{\gamma} v := \textbf{P}\nabla v^e = \nabla v^e - (\nabla v^e \cdot \bm{\nu})\bm{\nu}.
$$ 
For a (column) vector field $\bm{g} = (g_1, g_2, g_3)^T$, we let $\nabla \bm{g}^e = (\nabla g_1^e, \nabla g_2^e, \nabla g_3^e)^T$ denote the Jacobian matrix of $\bm{g}^e$. The surface divergence operator of $\bm{g}$ is defined as
$$
{\rm div}_{\gamma} \bm{g} := \mathrm{tr}(\textbf{P}\nabla \bm{g}^e \textbf{P}) = \nabla \cdot \bm{g}^e - \bm{\nu}^T\nabla \bm{g}^e \bm{\nu}.
$$
The Laplace-Beltrami operator is then defined as $\Delta_\gamma := {\rm div}_\gamma \nabla_\gamma$.

We adopt the standard notation $ W_q^m(\gamma) $ for the Sobolev space of order $ m $ and exponent $ q $ on $ \gamma $, with the corresponding norm given by $ \|\cdot\|_{W^m_q(\gamma)} $. When $ m = 0 $, this space is referred to as $ L^q(\gamma) $. Additionally, we define the Hilbert space $ H^m(\gamma) = W_2^m(\gamma) $ and the $ L^2 $ inner product on $ \gamma $ as $ (\cdot, \cdot)_\gamma $. 
Similar notation applies to any subdomain of $\gamma$.
Furthermore, the subspace of $ W_q^m(\gamma) $ consisting of functions with zero mean is denoted as $ \mathring{W}_q^m(\gamma) $, consistent with similar spaces. We also define the space
$$
\bm{H}({\rm div}_\gamma; \gamma) = \{\bm{g} \in \bm{L}^2(\gamma):~ {\rm div}_\gamma \bm{g} \in L^2(\gamma)\}.
$$

Consider surface $\gamma$ without boundary, Green's formula for tangential differential operators is given by (cf. \cite[Eq. (4.7)]{larsson2017continuous}):
$$
({\rm div}_\gamma \bm{g}, v)_\gamma = -(\bm{g}, \nabla_\gamma v)_\gamma + (\mathrm{tr}(\textbf{H})\bm{\nu} \cdot \bm{g}, v)_\gamma.
$$ 
Since the image of $\nabla_\gamma$ lies in the tangent plane, applying Green's formula twice to the surface biharmonic operator yields
\begin{equation} \label{eq:Green-twice}
(\Delta_\gamma^2 u, v)_\gamma = -(\nabla_\gamma \Delta_\gamma u, \nabla_\gamma v)_\gamma = (\Delta_\gamma u, \Delta_\gamma v)_\gamma. 
\end{equation}

\begin{remark}[inapplicable with $\nabla_\gamma^2$] \label{rk:inapplicable}
When considering the weak form corresponding to \eqref{eq:s-biharmonic}, using $(\nabla_\gamma^2 u, \nabla_\gamma^2 v)_\gamma$ as in the planar case is unjustified. In fact, by applying \cite[Eq. (2.14)]{reusken2020stream}, we obtain
$
(\Delta_\gamma u, \Delta_\gamma v)_\gamma = (\nabla_\gamma^2 u, \nabla_\gamma^2 v)_\gamma - (K\nabla_\gamma u, \nabla_\gamma v)_\gamma,
$
where \( K = K(x) \) is the Gaussian curvature. This clearly shows that the two expressions are not equivalent on a general surface $\gamma$.
\end{remark}

\subsection{Discretization}
Let $\Gamma_h$ be a polyhedral surface approximation of $\gamma$, composed of triangular faces. The surface $\Gamma_h$ provides an $\mathcal{O}(h^2)$ approximation, meaning the distance $d(x)$ between points on $\Gamma_h$ and $\gamma$ satisfies $d(x) = \mathcal{O}(h^2)$. Throughout this paper, we assume that $h$ is {\it sufficiently small} such that $\Gamma_h \subset U_\delta$. This ensures that the closest point projection is well-defined on $\Gamma_h$. Let $\mathcal{T}_h$ be the set of faces of $\Gamma_h$, which is shape-regular and, for simplicity, assumed to be quasi-uniform with $h := \max_{K \in \mathcal{T}_h} \mathrm{diam}(K)$. The images of the mesh elements on the exact surface are given by
$$
K^\gamma := \bm{p}(K) ~~\forall K \in \mathcal{T}_h, \quad \mathcal{T}_h^\gamma := \{\bm{p}(K): K \in \mathcal{T}_h\}.
$$
We denote $\mathcal{E}_h$ the set of edges of $\mathcal{T}_h$, and $\mathcal{V}_h$ be the set of vertices in $\mathcal{T}_h$.
For each $K \in \mathcal{T}_h$, $\mathcal{V}_K$ denote the set of three vertices of $K$. For $e \in \mathcal{E}_h$ with $e = \partial K_1 \cap \partial K_2$, we define the edge patches $\omega_e = K_1\cup K_2$ and $\omega_{e^\gamma} = K_1^\gamma \cup K_2^\gamma$. 

The piecewise constant outward unit normal to $\Gamma_h$ is denoted by $\bm{\nu}_h$. We shall use $\bm{\nu}_K := \bm{\nu}_h|_K$. Under the condition of a sufficiently small mesh size, the estimate $|\bm{\nu} \circ \bm{p} - \bm{\nu}_h| \lesssim h$ holds. For simplicity, when there is no ambiguity, the composition with $\bm{p}$ is sometimes omitted, and we write $|\bm{\nu} - \bm{\nu}_h| \lesssim h$ instead. The tangential projection with respect to $\Gamma_h$ is $\textbf{P}_h := \textbf{I} - \bm{\nu}_h \otimes \bm{\nu}_h$. Let $\mathrm{d}\sigma$ and $\mathrm{d}\sigma_h$ be the surface matures of $\gamma$ and $\Gamma_h$. It holds that $\mathrm{d}\sigma(\bm{p}(x)) = \mu_h(x) \mathrm{d} \sigma_h(x)$, where $\mu_h$ satisfies $|1 - \mu_h| \lesssim h^2$.

\paragraph{Operators and function spaces on $\Gamma_h$} 
Differential operators on $\Gamma_h$, such as $\nabla_{\Gamma_h}$, ${\rm div}_{\Gamma_h}$, and $\Delta_{\Gamma_h}$, can be defined in a similar manner. In the subsequent sections, we assume that these operators are {\it piecewise defined}. Accordingly, we will define piecewise Sobolev spaces:
$$
H_h^m(\Gamma_h) := \{v \in L^2(\Gamma_h): v|_K \in H^m(K),\forall K \in \mathcal{T}_h\}, ~~\|v\|_{H^m_h(\Gamma_h)}^2 := \sum_{K \in \mathcal{T}_h} \|v\|_{H^m(K)}^2.
$$
We denote the $L^2$ inner product on $\Gamma_h$ by $ (\cdot, \cdot)_{\Gamma_h} $. The definitions of the norm and inner product on $K\in \mathcal{T}_h$ and $e\in \mathcal{E}_h$ are similar.

Next, we define some commonly used jump and average operators on $\Gamma_h$.
For $e=\partial K_1^e \cap \partial K_2^e$, let $ \bm{n}_i^e $ (for $ i=1,2 $) denote the in-plane outward unit normal vector with respect to $ \partial K_i^e $ restricted to $e$, and let $ \bm{\tau}^e $ be the unit tangent vector of $ e $. It should be noted that, on the discrete surface $\Gamma_h$, in general, $\bm{n}_1^e \neq - \bm{n}_2^e$. For scalar function $v$ and vector field $\bm{g}$, we define:
\begin{equation} \label{eq:jump-average}
\begin{aligned}
[v] &:=v|_{K_1^e}-v|_{K_2^e},\quad
 \llbracket \bm{g} \rrbracket := \bm{g}|_{K_1^e} - \bm{g}|_{K_2^e},\quad
[\bm{g} \cdot \bm{n} ] := \bm{g} |_{K_1^e}\cdot\bm{n}_1^e + \bm{g} |_{K_2^e}\cdot\bm{n}_2^e,\\
\{v\} &:= \frac{1}{2}(v|_{K^e_1}+v|_{K^e_2}),  \quad 
\{\bm{g} \cdot \bm{n} \} :=\frac{1}{2}(\bm{g} |_{K^e_1}\cdot\bm{n}_1^e-\bm{g} |_{K^e_2}\cdot\bm{n}_2^e).
\end{aligned}
\end{equation}
For a vector field $\bm{g} \in \bm{H}_h^1(\Gamma_h)$, a well-known result states that $\bm{g} \in \bm{H}(\mathrm{div}_{\Gamma_h}; \Gamma_h)$ if and only if $[\bm{g} \cdot \bm{n}]|_e = 0$ for all edges $e$ (cf. \cite{lederer2020divergence}).

\paragraph{Extensions and lifts} For the rest of the paper, we view the closest projection as a mapping from the discrete surface to the the true surface. In this sense, it is a bijection, and its inverse is $\bm{p}^{-1}: \gamma \to \Gamma_h$. If $v$ is defined on $\Gamma_h$, its lift $v^\ell = v \circ \bm{p}^{-1}$. For any $ v \in H_h^m(\gamma) $ (where $ m = 0, 1, 2, 3 $), we have the following norm equivalence:
\begin{equation}  \label{eq:scalar-norm-equiv}
\|v\|_{H^m(K^\gamma)} \simeq \|v^e\|_{H^m(K)} \quad \forall K \in \mathcal{T}_h, \, m = 0, 1, 2, 3.
\end{equation}
This result can be derived using the change of variables and the chain rule (for instance, \cite[pp. 146]{dziuk1988finite} provides a proof for $m = 0,1,2$ on $C^3$ surfaces). It is important to note that on $C^4$ surfaces, the above equivalence holds at most for $m = 3$. For discussions on how to relax the smoothness requirements, we refer to \cite{bonito2020finite}.

Using the transformation relation of tangential gradients on $\gamma$ and $\Gamma_h$, we derive the following integral equality (cf. \cite[Eq. (2.14)]{demlow2009higher}):
\begin{equation}\label{eq:trans-surface}
(\nabla_{\Gamma_h}w^e, \nabla_{\Gamma_h}v^e)_{\Gamma_h} = (\textbf{R}_h\nabla_{\gamma}w, \nabla_{\gamma}v)_\gamma,
\end{equation}
where $\textbf{R}_h : = \mu_h^{-1} \textbf{P}(\textbf{I}-d\textbf{H})\textbf{P}_h(\textbf{I}-d\textbf{H})\textbf{P}$ satisfying $|(\textbf{R}_h - \textbf{I}) \textbf{P}| \lesssim h^2$.

\subsection{Surface Piola transforms}
We note that the derivation of the weak formulation, as shown in \eqref{eq:Green-twice}, requires the tangential derivative $\nabla_\gamma u \in \bm{H}(\mathrm{div}_\gamma; \gamma)$. Based on this consideration, we employ the divergence-conforming Piola transform between surfaces (cf. \cite{cockburn2016hybridizable, bonito2020divergence, lederer2020divergence, demlow2024tangential}).
The general definition is as follows: if $\Phi$ is a diffeomorphism mapping the surface $\mathscr{S}_0$ to the surface $\mathscr{S}_1$, then for a vector field $\bm{g}$ on $\mathscr{S}_0$, the Piola transform $\mathscr{P}_{\Phi}$ maps $\bm{g}$ to $\mathcal{S}_1$, with the expression
$$
(\mathscr{P}_{\Phi}\bm{g}) \circ \Phi := |D\Phi|^{-1} (D\Phi) \bm{g},
$$
where $D\Phi$ is the Jacobian of $\Phi$. If $\mathrm{d}\sigma_i$ represents the surface measure of $\mathscr{S}_i$, then the determinant of $D\Phi$, denoted as $\mu=|D\Phi|$, satisfies $\mu \mathrm{d}\sigma_0 = \mathrm{d}\sigma_1$. Similarly, the Piola transform with respect to $\Phi^{-1}$ can be given as
$$
(\mathscr{P}_{\Phi^{-1}} \bm{g}) \circ \Phi^{-1} := (\mu \circ \Phi^{-1}) (D\Phi^{-1}) \bm{g}, \quad \text{for } \bm{g}: \mathscr{S}_1 \to \mathbb{R}^3.
$$ 
Similar to the Euclidean setting, there holds 
\begin{equation} \label{eq:Piola-div}
{\rm div}_{\mathscr{S}_0} \bm{g} = \mu {\rm div}_{\mathscr{S}_1} \mathscr{P}_{\Phi} \bm{g} \qquad \forall \bm{g} \in \bm{H}({\rm div}_{\mathscr{S}_0}; \mathscr{S}_0).
\end{equation}

In the case of bijection $\bm{p}:\Gamma_h\to \gamma$ (i.e., $\mathscr{S}_0 = \Gamma_h$, $\mathscr{S}_1 = \gamma$ so that $\mu = \mu_h$), the Piola transform of $\bm{g}:\gamma \to \mathbb{R}^3$ with respect to the inverse $\bm{p}^{-1}$ is given by 
\begin{align}\label{def:Piola_invp}
\breve{\bm{g}}:=\mathscr{P}_{\bm{p}^{-1}}\bm{g}=\mu_h\Big[\textbf{I}-\frac{\bm{\nu}\otimes \bm{\nu}_h}{\bm{\nu}\cdot \bm{\nu}_h} \Big][\textbf{I}-d\textbf{H}]^{-1}\bm{g}^e.
\end{align}
Taking $\bm{g} = \nabla_\gamma v$ for some $v \in C^1(\gamma) \cap H_h^2(\gamma)$, then \eqref{eq:Piola-div} implies that 
\begin{equation} \label{eq:Piola-Delta}
\Delta_\gamma v|_{K^\gamma}(\bm{p}(x)) = {\rm div}_\gamma \nabla_\gamma v|_{K^\gamma}(\bm{p}(x)) = \mu_h^{-1}(x) {\rm div}_{\Gamma_h} \widebreve{\nabla_\gamma v}|_K(x) \quad \forall K \in \mathcal{T}_h.
\end{equation}

The following lemma states the equivalence of norms of vector fields and their Piola transforms for $C^4$ surface. 
\begin{lemma}[norm equivalence of the Piola transform]  \label{lm:Piola-equivalence}
For any $K \in \mathcal{T}_h$, let $\bm{g}$ be a vector field on the $C^4$ surface $K^\gamma$, and denote its corresponding Piola transform according to \eqref{def:Piola_invp} by $\breve{\bm g} = \mathscr{P}_{\bm{p}^{-1}} \bm{g}: K \to \mathbb{R}^3$. If $\bm{g} \in \bm{H}^m(K^\gamma)$ for $m=0,1,2$, then $\breve{\bm g} \in \bm{H}^m(K)$. Moreover, we have the following equivalence:
\begin{equation} \label{eq:Piola-equivalence}
\|\bm{g}\|_{H^m(K^\gamma)} \simeq \|\breve{\bm g}\|_{H^m(K)} \quad m=0,1,2.
\end{equation}
\end{lemma}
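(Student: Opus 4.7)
The plan is to write $\breve{\bm{g}} = \mathbf{M}\,\bm{g}^e$ where
$$\mathbf{M}(x) := \mu_h(x)\Bigl[\mathbf{I} - \frac{\bm{\nu}(x)\otimes \bm{\nu}_K}{\bm{\nu}(x)\cdot \bm{\nu}_K}\Bigr][\mathbf{I} - d(x)\mathbf{H}(x)]^{-1},$$
and then split the claim into two ingredients: a scalar componentwise norm equivalence between $K^\gamma$ and $K$ supplied by \eqref{eq:scalar-norm-equiv}, and a uniform boundedness result for $\mathbf{M}$ and $\mathbf{M}^{-1}$ in $W^{m,\infty}(K)$ for $m=0,1,2$.

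First I would verify the regularity of $\mathbf{M}$. Since $d\in C^4(U_\delta)$ we have $\bm{\nu}=\nabla d\in C^3$ and $\mathbf{H}=\nabla^2 d\in C^2$, while $\bm{\nu}_K$ is constant on $K$. Because $|d|\lesssim h^2$ and $|\bm{\nu}-\bm{\nu}_K|\lesssim h$, for $h$ sufficiently small the denominator $\bm{\nu}\cdot\bm{\nu}_K \geq 1 - Ch$ is bounded away from zero and $\mathbf{I}-d\mathbf{H}$ is uniformly invertible via Neumann series. Successive application of the product, quotient, and inverse-matrix differentiation rules then yields $\|\mathbf{M}\|_{W^{m,\infty}(K)}\lesssim 1$ and $\|\mathbf{M}^{-1}\|_{W^{m,\infty}(K)}\lesssim 1$ for $m=0,1,2$, with constants independent of $K$ and $h$.

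Next, applying \eqref{eq:scalar-norm-equiv} componentwise to each Cartesian component of $\bm{g}$ gives $\|\bm{g}\|_{H^m(K^\gamma)}\simeq \|\bm{g}^e\|_{H^m(K)}$ for $m=0,1,2$. Then the Leibniz rule combined with the $W^{m,\infty}$ bounds on $\mathbf{M}$ produces
$$\|\breve{\bm{g}}\|_{H^m(K)} \lesssim \sum_{k=0}^{m}\|\mathbf{M}\|_{W^{m-k,\infty}(K)}\|\bm{g}^e\|_{H^k(K)} \lesssim \|\bm{g}^e\|_{H^m(K)},$$
and the reverse inequality follows identically from $\bm{g}^e=\mathbf{M}^{-1}\breve{\bm{g}}$. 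Chaining the two equivalences yields \eqref{eq:Piola-equivalence}.

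The main obstacle is the first step: carefully tracking the uniform $h$-independence of the $W^{2,\infty}$ bounds on $\mathbf{M}$ and $\mathbf{M}^{-1}$. In particular one must differentiate the rational factor $(\bm{\nu}\otimes\bm{\nu}_K)/(\bm{\nu}\cdot\bm{\nu}_K)$ and the matrix inverse $[\mathbf{I}-d\mathbf{H}]^{-1}$ twice, which is where the $C^4$ assumption on $\gamma$ is sharp: two derivatives of $\mathbf{M}$ involve two derivatives of $\mathbf{H}=\nabla^2 d$, requiring $d\in C^4$. This also explains why the stated equivalence is restricted to $m\leq 2$, consistent with the limitation noted after \eqref{eq:scalar-norm-equiv}.
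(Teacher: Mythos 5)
Your decomposition $\breve{\bm{g}} = \textbf{M}\,\bm{g}^e$ with uniform $W^{m,\infty}(K)$ bounds on the coefficient matrix, combined with the componentwise scalar equivalence \eqref{eq:scalar-norm-equiv}, is the standard route and is effectively what the cited \cite[Lemma~4.1]{bonito2020divergence} does; the paper itself simply invokes that reference for $m=0,1$ and states that $m=2$ is similar, so you are in essence supplying the content of the citation. Your diagnosis of where $C^4$ is used (two derivatives of $\textbf{H}=\nabla^2 d$ appear when bounding $\textbf{M}$ in $W^{2,\infty}$) also matches the comment in the paper's proof.

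There is, however, a genuine gap in your reverse inequality. The $3\times3$ matrix $\textbf{M}$ is \emph{not} invertible: the factor $\textbf{I} - \frac{\bm{\nu}\otimes\bm{\nu}_K}{\bm{\nu}\cdot\bm{\nu}_K}$ annihilates $\bm{\nu}$, and since $\textbf{H}\bm{\nu}=0$ (differentiate $|\nabla d|^2\equiv 1$) one finds $[\textbf{I}-d\textbf{H}]^{-1}\bm{\nu}=\bm{\nu}$, so $\textbf{M}$ has $\bm{\nu}(x)$ in its kernel at every $x$. The step ``$\bm{g}^e=\textbf{M}^{-1}\breve{\bm{g}}$'' therefore does not make sense as written, and neither does the claim $\|\textbf{M}^{-1}\|_{W^{m,\infty}(K)}\lesssim 1$. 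The lemma is correct only because the Piola transform acts on \emph{tangential} fields, and $\textbf{M}$ restricted to the tangent plane of $\gamma$ at $\bm{p}(x)$ is a uniformly bounded isomorphism onto the tangent plane of $K$. The cleanest repair is to use the composition identity $\mathscr{P}_{\bm{p}}\circ\mathscr{P}_{\bm{p}^{-1}}=\mathrm{id}$ and write $\bm{g}\circ\bm{p} = \textbf{N}\,\breve{\bm{g}}$ with $\textbf{N}=\mu_h^{-1}(\textbf{P}-d\textbf{H})$ (acting on the $K$-tangential vector $\breve{\bm{g}}$), which admits the same $W^{m,\infty}$ bounds as $\textbf{M}$ by the same reasoning; the reverse inequality then follows exactly as the forward one. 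Alternatively, replace $\textbf{M}^{-1}$ throughout by an explicit pseudoinverse defined on tangent vectors and verify its $W^{m,\infty}$ bounds directly.

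A smaller secondary point: your closing sentence attributes the restriction $m\leq 2$ to the limitation noted after \eqref{eq:scalar-norm-equiv}, but that scalar equivalence holds up to $m=3$ on $C^4$ surfaces. The cutoff $m\leq 2$ here is one order tighter and is caused specifically by $\textbf{H}$ entering $\textbf{M}$ with only $C^2$ smoothness, not by the scalar equivalence.
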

\begin{proof}
The proof for $m = 0, 1$ is given in \cite[Lemma 4.1]{bonito2020divergence}. The case for $m = 2$ follows a similar argument, so the details will not be elaborated here. We note that since the Piola transform \eqref{def:Piola_invp} involves $\textbf{H}$ with $C^2$ smoothness, the assumption of $C^4$ smoothness is therefore beneficial in the case for $m = 2$.
\end{proof}

We now introduce a key lemma, which establishes a refined approximation of the Piola transform in terms of the tangential derivative. This lemma forms an essential step in constructing our numerical method.
\begin{lemma}[$\mathcal{O}(h^2)$ approximation of Piola transform] \label{lm:Piola-derivative}
For any $v \in C^1(\gamma)$, it holds that
\begin{equation}\label{eq:Piola-derivative}
|\nabla_{\Gamma_h}v^e - \widebreve{\nabla_\gamma v}| \lesssim h^2|(\nabla_\gamma v)^e| \quad \text{on }\Gamma_h. 
\end{equation}
\end{lemma}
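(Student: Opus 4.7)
The plan is to reduce both vector fields $\nabla_{\Gamma_h}v^e$ and $\widebreve{\nabla_\gamma v}$ to explicit linear combinations of the tangential gradient $\bm{w}:=(\nabla_\gamma v)^e$ and the geometric quantities $\bm{\nu}$, $\bm{\nu}_h$, $d$, $\textbf{H}$, $\mu_h$, then subtract term by term, repeatedly using the pointwise estimates $|d|\lesssim h^2$, $|\bm{\nu}-\bm{\nu}_h|\lesssim h$, $|\mu_h-1|\lesssim h^2$, and $\bm{\nu}\cdot\bm{\nu}_h=1+\mathcal{O}(h^2)$ already recorded in Section~\ref{sec:preliminaries}. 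The key observation that makes the estimate sharp is that $\bm{w}$ is tangent to $\gamma$, so $\bm{\nu}\cdot\bm{w}=0$, which upgrades $\bm{\nu}_h\cdot\bm{w}=(\bm{\nu}_h-\bm{\nu})\cdot\bm{w}$ from $\mathcal{O}(1)$ to $\mathcal{O}(h)|\bm{w}|$.

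First, I would handle $\nabla_{\Gamma_h}v^e$. Since $v^e=v\circ\bm{p}$ and $\nabla\bm{p}=\textbf{P}-d\textbf{H}$, the chain rule on the tubular neighborhood gives $\nabla v^e=(\textbf{P}-d\textbf{H})\bm{w}$. Applying the tangential projection $\textbf{P}_h$ and using $\textbf{P}\bm{w}=\bm{w}$ yields
\begin{equation*}
\nabla_{\Gamma_h}v^e=\textbf{P}_h\bm{w}-d\,\textbf{P}_h\textbf{H}\bm{w}=\textbf{P}_h\bm{w}+\mathcal{O}(h^2)|\bm{w}|.
\end{equation*}

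Next, I would expand the Piola transform \eqref{def:Piola_invp}. The Neumann series gives $[\textbf{I}-d\textbf{H}]^{-1}=\textbf{I}+d\textbf{H}+\mathcal{O}(d^2)=\textbf{I}+\mathcal{O}(h^2)$, while $\mu_h=1+\mathcal{O}(h^2)$ and $(\bm{\nu}\cdot\bm{\nu}_h)^{-1}=1+\mathcal{O}(h^2)$. Substituting into \eqref{def:Piola_invp} and collecting the $\mathcal{O}(h^2)|\bm{w}|$ remainders produces
\begin{equation*}
\widebreve{\nabla_\gamma v}=\bm{w}-(\bm{\nu}_h\cdot\bm{w})\,\bm{\nu}+\mathcal{O}(h^2)|\bm{w}|.
\end{equation*}
Since $\textbf{P}_h\bm{w}=\bm{w}-(\bm{\nu}_h\cdot\bm{w})\bm{\nu}_h$, subtracting gives
\begin{equation*}
\widebreve{\nabla_\gamma v}-\nabla_{\Gamma_h}v^e=(\bm{\nu}_h\cdot\bm{w})(\bm{\nu}_h-\bm{\nu})+\mathcal{O}(h^2)|\bm{w}|.
\end{equation*}

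The final step is the sharpness gain: writing $\bm{\nu}_h\cdot\bm{w}=(\bm{\nu}_h-\bm{\nu})\cdot\bm{w}$ (using $\bm{\nu}\cdot\bm{w}=0$), the leading remainder factors as a product of two $\mathcal{O}(h)$ quantities, delivering the claimed $\mathcal{O}(h^2)|\bm{w}|$ bound pointwise on $\Gamma_h$. I expect the main pitfall to be bookkeeping in the expansion of \eqref{def:Piola_invp}; one must verify that every $\mathcal{O}(h)$ contribution couples with a second $\mathcal{O}(h)$ factor (either $d$, $1-\mu_h$, or the tangentiality identity above) before any remainders are lumped into $\mathcal{O}(h^2)|\bm{w}|$, since a naive bound would only give $\mathcal{O}(h)|\bm{w}|$.
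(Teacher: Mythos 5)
Your proof is correct and follows essentially the same route as the paper: expand $\nabla_{\Gamma_h}v^e$ via $\nabla\bm{p}=\textbf{P}-d\textbf{H}$, expand the Piola transform using $|d|+|1-\mu_h|+|1-\bm{\nu}\cdot\bm{\nu}_h|\lesssim h^2$, and observe that the leading-order difference vanishes to $\mathcal{O}(h^2)$. The only cosmetic difference is where the tangentiality of $\bm{w}=(\nabla_\gamma v)^e$ enters: you apply $\textbf{P}\bm{w}=\bm{w}$ early (reducing $\nabla_{\Gamma_h}v^e$ to $\textbf{P}_h\bm{w}$ up to $\mathcal{O}(h^2)$) and then invoke $\bm{\nu}\cdot\bm{w}=0$ explicitly at the end to write $\bm{\nu}_h\cdot\bm{w}=(\bm{\nu}_h-\bm{\nu})\cdot\bm{w}$, whereas the paper retains the full operator $\textbf{P}_h\textbf{P}$ so that the difference factors as the rank-one matrix $-(\bm{\nu}-\bm{\nu}_h)\otimes(\bm{\nu}-\bm{\nu}_h)$, whose operator norm is already $\mathcal{O}(h^2)$ without singling out tangentiality; these are the same cancellation, recorded in different notation.
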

\begin{proof}
We have the following relationship
\begin{equation} \label{eq:derivative-ve}
\nabla_{\Gamma_h} v^e 
= \textbf{P}_h \nabla(v \circ \bm{p}) 
= \textbf{P}_h (\textbf{P}-d \textbf{H}) (\nabla_\gamma v)^e.
\end{equation}

Now $|d|+|1-\mu_h|\lesssim h^2$, and $|1-\bm{\nu}_h\cdot \bm{\nu}|=\frac{1}{2}|\bm{\nu}_h-\bm{\nu}|^2\lesssim h^2$. Thus the expression of Piola transformation in \eqref{def:Piola_invp} can be written as 
$$
\begin{aligned}
\widebreve{\nabla_\gamma v} &= \Big( \Big[\textbf{I} - \frac{\bm{\nu}\otimes \bm{\nu}_h}{\bm{\nu}\cdot \bm{\nu}_h}\Big]
+ d \Big[\textbf{I}-\frac{\bm{\nu}\otimes \bm{\nu}_h}{\bm{\nu}\cdot \bm{\nu}_h} \Big]\textbf{H}[\textbf{I}-d\textbf{H}]^{-1}\\
&~~~\qquad\qquad +(\mu_h-1)\Big[\textbf{I}-\frac{\bm{\nu}\otimes \bm{\nu}_h}{\bm{\nu}\cdot \bm{\nu}_h} \Big] [\textbf{I}-d\textbf{H}]^{-1} \Big)(\nabla_\gamma v)^e \\
&= (\textbf{I} - \bm{\nu}\otimes \bm{\nu}_h + \mathcal{O}(h^2)) (\nabla_\gamma v)^e.
\end{aligned}
$$ 
Therefore, we have 
$$
\begin{aligned}
|\nabla_{\Gamma_h} v^e - \widebreve{\nabla_\gamma v}|  
& \lesssim \Big|\Big( [\textbf{I}-\bm{\nu}_h \otimes \bm{\nu}_h][\textbf{I}-\bm{\nu}\otimes \bm{\nu}]-[\textbf{I}- \bm{\nu}\otimes \bm{\nu}_h]+h^2\Big)(\nabla_\gamma v)^e\Big|\\
&=| (-(\bm{\nu}-\bm{\nu}_h)\otimes (\bm{\nu}-\bm{\nu}_h) + h^2 )(\nabla_\gamma v)^e| \lesssim h^2 |(\nabla_\gamma v)^e|,
\end{aligned}
$$
where $|\bm{\nu} - \bm{\nu}_h| \lesssim h$ is applied in the last step. 
\end{proof}

\begin{remark} \label{rm:Piola-vs-extension}
The above lemma indicates that the tangential derivative after applying the Piola transform provides a better approximation to the tangential derivative after extension. This improvement arises intuitively because both derivatives lie in the tangent space of $\Gamma_h$. In fact, if $\nabla_\gamma v$ is directly extended to $\Gamma_h$, from \eqref{eq:derivative-ve} we have only 
\begin{equation} \label{eq:extension-derivative}
|\nabla_{\Gamma_h}v^e - (\nabla_{\gamma} v)^e| \lesssim h |(\nabla_\gamma v)^e|.
\end{equation}
which is less accurate than the Piola transform. Another point is that \eqref{eq:extension-derivative}, combined with the triangle inequality, leads to a pointwise equivalence: $|\nabla_{\Gamma_h}v^e| \simeq |(\nabla_{\gamma} v)^e|$, assuming $h$ is sufficiently small. Note that this assumption, essential to surface FEM, will be taken as a given in further discussions.
\end{remark}

Inspired by \cite{demlow2024tangential}, we apply the (discrete) Piola transforms proposed therein to map between surface triangles. The following definition is given in \cite[Definition 2.3]{demlow2024tangential}: For each vertex $a \in \mathcal{V}_h$, we arbitrary choose a single (fixed) face $K_a \in \mathcal{T}_a$. For $K \in \mathcal{T}_a$, define $\mathscr{M}_a^K: \mathbb{R}^3 \to \mathbb{R}^3$ by 
\begin{equation} \label{def:Mka}
\mathscr{M}_a^K \bm{x} := \Big(\bm{\nu}_{K_a}\cdot \bm{\nu}_K \Big[\bm{I} - \frac{\bm{\nu}_{K_a}\otimes \bm{\nu}_K}{\bm{\nu}_{K_a}\cdot \bm{\nu}_K}\Big] \Big) \bm{x} \quad \forall \bm{x} \in \mathbb{R}^3.
\end{equation}
In particular, $\mathscr{M}_a^K \bm{x}$ is the Piola transform of $\bm{x}$ with respect to the inverse of the closest point projection onto the plane containing $K_a$. 

\begin{lemma}[Lemma 2.5 of \cite{demlow2024tangential}] Fixed $a \in \mathcal{V}_h$, and let $\bm{g}$ lie in the tangent plane of $\gamma$ at $\bm{p}(a)$. For $K \in \mathcal{T}_a$, let $\breve{\bm g}_K := \mathscr{P}_{\bm{p}^{-1}} \bm{g}|_K$ be the Piola transform of $\bm{g}$ to $K$ via the inverse of the closest point projection. Then, 
\begin{equation} \label{eq:Mk}
|\breve{\bm{g}}_K - \mathscr{M}_a^K \breve{\bm{g}}_{K_a}| \lesssim h^2|\breve{\bm g}_{K_a}| \lesssim h^2 |\bm{g}^e|.
\end{equation}
\end{lemma}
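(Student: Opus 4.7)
The plan is to reduce the estimate to a pointwise identity at the vertex $a$. Since $a \in \Gamma_h$ lies within $\mathcal{O}(h^2)$ of $\gamma$, we have $|d(a)| \lesssim h^2$, $\mu_h(a) = 1 + \mathcal{O}(h^2)$, and $\bm{\nu}(\bm{p}(a)) \cdot \bm{\nu}_K = 1 + \mathcal{O}(h^2)$ for every $K \in \mathcal{T}_a$. Writing $\bm{\nu}_0 := \bm{\nu}(\bm{p}(a))$, the first step would be to expand \eqref{def:Piola_invp} pointwise at $a$ and, using tangency $\bm{g}\cdot \bm{\nu}_0 = 0$, obtain
$$
\breve{\bm g}_K = \bm{g} - (\bm{g} \cdot \bm{\nu}_K)\bm{\nu}_0 + \mathcal{O}(h^2)|\bm{g}|,
$$
with an analogous formula for $\breve{\bm g}_{K_a}$. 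The crucial observation is that tangency forces $\bm{g}\cdot \bm{\nu}_K = \bm{g}\cdot (\bm{\nu}_K - \bm{\nu}_0) = \mathcal{O}(h)|\bm{g}|$, so the middle term is already small, and any $\mathcal{O}(h^2)$ factor multiplying it produces $\mathcal{O}(h^3)$, which is harmlessly absorbed into the remainder.

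The second step is to apply the explicit formula \eqref{def:Mka} for $\mathscr{M}_a^K$ to the expansion of $\breve{\bm g}_{K_a}$. Using $\bm{\nu}_{K_a}\cdot \bm{\nu}_K = 1 + \mathcal{O}(h^2)$ together with $\bm{\nu}_{K_a}\cdot\bm{\nu}_0 = 1 + \mathcal{O}(h^2)$, and again absorbing all $\mathcal{O}(h^3)$ cross terms, the result I expect is
$$
\mathscr{M}_a^K \breve{\bm g}_{K_a} = \bm{g} - (\bm{g}\cdot \bm{\nu}_{K_a})\bm{\nu}_0 - (\bm{g}\cdot \bm{\nu}_K)\bm{\nu}_{K_a} + (\bm{g}\cdot \bm{\nu}_{K_a})\bm{\nu}_{K_a} + \mathcal{O}(h^2)|\bm{g}|.
$$
Subtracting the two expansions, the leading $\bm{g}$ cancels, and after regrouping the difference takes the form
$$
\breve{\bm g}_K - \mathscr{M}_a^K \breve{\bm g}_{K_a} = \big(\bm{g}\cdot (\bm{\nu}_{K_a} - \bm{\nu}_K)\big)(\bm{\nu}_0 - \bm{\nu}_{K_a}) + \mathcal{O}(h^2)|\bm{g}|.
$$
Since both factors in the leading term are $\mathcal{O}(h)$, this yields the desired $\mathcal{O}(h^2)|\bm{g}|$ bound. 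The auxiliary inequality $|\breve{\bm g}_{K_a}| \lesssim |\bm{g}^e|$ follows directly from boundedness of the matrix appearing in \eqref{def:Piola_invp}, since $|\bm{\nu}\cdot\bm{\nu}_h| \gtrsim 1$ and $|d|\lesssim h^2$.

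The main obstacle I anticipate is the bookkeeping in the second step: before cancellation, each expansion carries several $\mathcal{O}(h)$ pieces of the form $(\bm{g}\cdot \bm{\nu}_j)\bm{\nu}_i$, and the $\mathcal{O}(h^2)$ estimate only emerges once these are gathered into a single product of two small factors. The structural reason this works is that $\mathscr{M}_a^K$ is, by design, the Piola transform between the tangent planes of $K_a$ and $K$, while $\breve{\bm g}_K$ and $\breve{\bm g}_{K_a}$ are $\mathcal{O}(h^2)$-perturbations of the Piola transforms onto these same planes, with the $\mathcal{O}(h)$ deviation entirely attributable to $\bm{\nu} - \bm{\nu}_h$, which is orthogonal to $\bm{g}$ up to another $\mathcal{O}(h)$ term.
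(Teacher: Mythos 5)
Your proof is correct. Note that the paper supplies no proof of this statement---it simply cites Lemma~2.5 of Demlow and Neilan---so there is no internal argument to compare against; your computation provides one. Every step checks out: the pointwise expansion of \eqref{def:Piola_invp} at $a$ uses $|d(a)|\lesssim h^2$, $\mu_h(a)=1+\mathcal{O}(h^2)$, $1-\bm{\nu}_0\cdot\bm{\nu}_K=\tfrac12|\bm{\nu}_0-\bm{\nu}_K|^2\lesssim h^2$, and the tangency $\bm{g}\cdot\bm{\nu}_0=0$ (which makes $\bm{g}\cdot\bm{\nu}_K=\bm{g}\cdot(\bm{\nu}_K-\bm{\nu}_0)$ of size $\mathcal{O}(h)|\bm{g}|$), giving $\breve{\bm g}_K=\bm{g}-(\bm{g}\cdot\bm{\nu}_K)\bm{\nu}_0+\mathcal{O}(h^2)|\bm{g}|$; writing \eqref{def:Mka} as $\mathscr{M}_a^K\bm{x}=(\bm{\nu}_{K_a}\cdot\bm{\nu}_K)\bm{x}-(\bm{\nu}_K\cdot\bm{x})\bm{\nu}_{K_a}$, applying it to the analogous expansion of $\breve{\bm g}_{K_a}$, and subtracting does indeed collapse to $(\bm{g}\cdot(\bm{\nu}_{K_a}-\bm{\nu}_K))(\bm{\nu}_0-\bm{\nu}_{K_a})+\mathcal{O}(h^2)|\bm{g}|$, a product of two $\mathcal{O}(h)$ factors. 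One small point worth making explicit: the target bound is $\lesssim h^2|\breve{\bm g}_{K_a}|$ and you derived $\lesssim h^2|\bm{g}|$, so you also need $|\bm{g}|\lesssim|\breve{\bm g}_{K_a}|$ (not just the reverse direction you mention at the end); this follows immediately from your own first-step expansion, since $\breve{\bm g}_{K_a}=\bm{g}+\mathcal{O}(h)|\bm{g}|$ implies $|\breve{\bm g}_{K_a}|\simeq|\bm{g}|$ for $h$ sufficiently small.
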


By combing \eqref{eq:Piola-derivative} and \eqref{eq:Mk}, with $\bm{g} = \nabla_\gamma v$, we have 
\begin{equation} \label{eq:Piola-Mka}
\Big| (\nabla_{\Gamma_h}v^e)|_K -  \mathscr{M}_a^K \widebreve{\nabla_\gamma v}|_{K_a}\Big| \lesssim h^2 |(\nabla_\gamma v)^e| 
\end{equation}

\subsection{New Zienkiewiz-type element} \label{subsec:NZT}
From the perspective of the space where the variational formulation resides, i.e. \eqref{eq:Green-twice}, the discrete function space requires continuity of the functions and continuity of the tangential gradient in the normal direction. Even in locally flat spaces, these two conditions effectively require the functions to be $C^1$. However, on polyhedral meshes $\Gamma_h$, these continuity requirements conflict at the vertices (unless the local approximated surface is trivial). From this viewpoint, relaxing certain continuity conditions becomes a necessary choice. 

On the other hand, due to the relatively weak nature of the continuous norms in the problem (as discussed in the Remark \ref{rk:inapplicable}), choosing to maintain the continuity of the discrete space (in certain strong sense) while weakly preserving the $\bm{H}({\rm div})$-conformity of the tangential derivatives is a natural option.

Following these principles, a suitable element in the plane is the New Zienkiewicz-type element (NZT element) proposed in \cite{wang2007new}. Although it can be generalized to any dimension, this paper focuses solely on the two-dimensional case ($n=2$). The  shape function space and the definition of degrees of freedom (DoFs) on the triangular element $K$ are given as follows.

\paragraph{Shape function space}
Let $\lambda_i$ \((i=1,2,3)\) be the barycentric coordinates, and define the bubble function $b_K := \lambda_1 \lambda_2 \lambda_3$. For $1\leq i < j \leq 3$, denote
$$
q_{ij} := \lambda_i^2\lambda_j - \lambda_i\lambda_j^2 + \left(2(\lambda_i - \lambda_j) + \frac{3(\nabla\lambda_i-\nabla\lambda_j)\cdot \nabla\lambda_k}{|\nabla \lambda_k|^2}(2\lambda_k -1) \right)b_K \quad k \neq i,j.
$$ 
The space function space is 
\begin{equation} \label{eq:shape-NZT}
V(K) := \mathcal{P}_2(K) + \mathrm{span}\{q_{ij}: 1\leq i < j \leq 3\}.
\end{equation}

\paragraph{Degrees of freedom} The DoFs are defined as the function values and derivative values at the vertices (depicted in Figure \ref{fig:NZT-plane}). For any edge $e$ of $K$, it is noted that the function in $V(K)$ restricted to $e$ is cubic. Therefore, the element (in the planar case) belongs to the class of $C^0$. 

Thanks to the intricately designed shape function space of the NZT element \eqref{eq:shape-NZT}, it satisfies the following key property (see \cite[Equ (7)]{wang2007new}): For any $v \in V(K)$,
\begin{equation}\label{eq:NZT-edge}
  \frac{1}{|e|}  \int_e \frac{\partial v}{\partial \bm{n}^e}\mathrm{d} s = \frac{1}{2} \frac{\partial v}{\partial \bm{n}^e}(a^e_1) + 
  \frac{1}{2} \frac{\partial v}{\partial \bm{n}^e}(a^e_2).
\end{equation}
where $\bm{n}^e$ is the unit out normal of edge $e$ of $K$, $a^e_i~(i=1,2)$ are the vertices of $e$.

\section{Finite element method on surface} \label{sec:FEM}
In this section, we aim to propose an NZT space on a discrete surface and present the corresponding finite element method in conjunction with stabilization techniques.

\subsection{Surface NZT space}
The surface NZT finite element space on the discretized surface $\Gamma_h$ is defined as:
\begin{equation}\label{eq:sNZT-space}
\begin{aligned}
V_h := \big\{v|_K \in V(K):~ v|_K(a) & = v|_{K_a}(a), \\
\nabla_{\Gamma_h}v|_K(a) &= \mathscr{M}_a^{K}(\nabla_{\Gamma_h}v|_{K_a}(a)),~ \forall K\in \mathcal{T}_a,\ \forall a\in \mathcal{V}_h  \big\}.
\end{aligned}
\end{equation}

\begin{figure}[!htbp]
\centering 
\captionsetup{justification=centering}
\subfloat[DoFs of NZT element]{
  \includegraphics[width=0.33\textwidth]{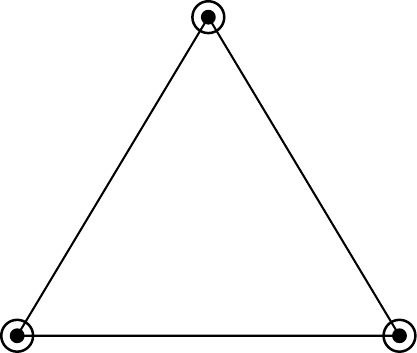}
  \label{fig:NZT-plane}
}\quad %
\subfloat[Connection of vertex gradient DoFs in $V_h$]{
  \includegraphics[width=0.55\textwidth]{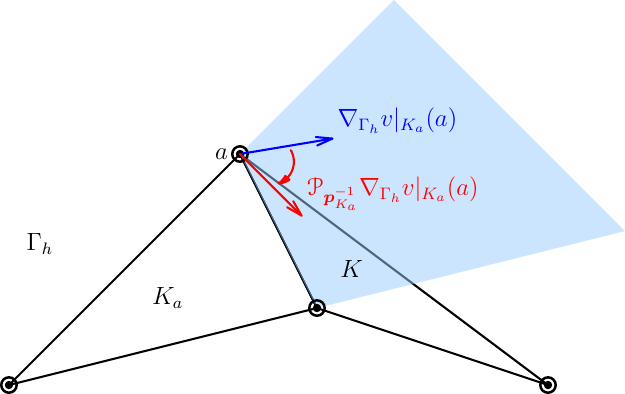}
  \label{fig:div-SAFE1}
}
\caption{Composition of surface NZT space $V_h$ (right) from the NZT element (left), where the DoF of tangential derivative at vertex $a$ in $ K_a $ is mapped to $K$ via the Piola transform with respect to the closest point projection on $ K_a $.}
\end{figure}

Define $\mathring{V}_{h}$ as the integral-free subspace of $V_h$. Some properties of the surface NZT space \eqref{eq:sNZT-space} are considered. First, the discrete tangential gradient is shown to exhibit weak $\bm{H}({\rm div}_{\Gamma_h};\Gamma_h)$ conformity.
\begin{lemma}[weak $\bm{H}({\rm div}_{\Gamma_h}; \Gamma_h)$ conformity of $\nabla_{\Gamma_h}v$]
    For any $v\in V_h$ and $e\in \mathcal{E}_h$, it holds that 
    \begin{align}\label{eq:NZT_D1jumpaverage}
       \frac{1}{|e|} \int_e [\nabla_{\Gamma_h} v \cdot \bm{n}]\mathrm{d} s_h=0.
    \end{align}
\end{lemma}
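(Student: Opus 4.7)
The plan is to reduce the integrated jump identity to a pointwise cancellation at the two endpoints of $e$ by invoking the characteristic edge identity \eqref{eq:NZT-edge} of the planar NZT element, and then to verify the cancellation by a short geometric computation based on the explicit form of $\mathscr{M}_a^K$ in \eqref{def:Mka}.

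First, let $e = \partial K_1 \cap \partial K_2$ with endpoints $a_1^e, a_2^e$. Since each triangle $K_i$ is flat and $v|_{K_i}$ lies in the planar NZT shape function space $V(K_i)$, applying \eqref{eq:NZT-edge} componentwise on each $K_i$ and summing over $i=1,2$ yields
\[
\frac{1}{|e|}\int_e [\nabla_{\Gamma_h} v \cdot \bm{n}]\,\mathrm{d}s_h
= \frac{1}{2}\sum_{j=1}^{2}\bigl(\nabla_{\Gamma_h}v|_{K_1}(a_j^e)\cdot\bm{n}_1^e + \nabla_{\Gamma_h}v|_{K_2}(a_j^e)\cdot\bm{n}_2^e\bigr).
\]
Hence it is enough to establish, for each endpoint $a$ of $e$, the pointwise identity
\[
\nabla_{\Gamma_h}v|_{K_1}(a)\cdot\bm{n}_1^e + \nabla_{\Gamma_h}v|_{K_2}(a)\cdot\bm{n}_2^e = 0.
\]

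Next, by the definition of $V_h$, both gradients at $a$ are Piola transforms of a common reference value: $\nabla_{\Gamma_h}v|_{K_i}(a) = \mathscr{M}_a^{K_i}\bm{g}$ with $\bm{g} := \nabla_{\Gamma_h}v|_{K_a}(a)$. The problem therefore reduces to the purely geometric identity, valid for every $\bm{g} \in \mathbb{R}^3$,
\[
\mathscr{M}_a^{K_1}\bm{g}\cdot\bm{n}_1^e + \mathscr{M}_a^{K_2}\bm{g}\cdot\bm{n}_2^e = 0.
\]
Using the BAC-CAB identity, formula \eqref{def:Mka} can be rewritten compactly as $\mathscr{M}_a^{K}\bm{g} = (\bm{\nu}_{K_a}\times\bm{g})\times\bm{\nu}_K$, so each term becomes a scalar triple product $\mathscr{M}_a^{K_i}\bm{g}\cdot\bm{n}_i^e = (\bm{\nu}_{K_a}\times\bm{g})\cdot(\bm{\nu}_{K_i}\times\bm{n}_i^e)$. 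Because $\bm{\tau}^e$ lies in both element planes, $\bm{\tau}^e \perp \bm{\nu}_{K_i}$, and the outward in-plane normals across $e$ can be written consistently as $\bm{n}_1^e = \bm{\tau}^e\times\bm{\nu}_{K_1}$ and $\bm{n}_2^e = \bm{\nu}_{K_2}\times\bm{\tau}^e$. One more BAC-CAB expansion gives $\bm{\nu}_{K_1}\times\bm{n}_1^e = \bm{\tau}^e$ and $\bm{\nu}_{K_2}\times\bm{n}_2^e = -\bm{\tau}^e$, which produces the desired cancellation.

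The main subtlety is the orientation convention for the in-plane edge normals. On the polyhedral surface one has $\bm{n}_1^e \neq -\bm{n}_2^e$ in general, but both point outward from their respective triangle and lie in the plane perpendicular to $\bm{\tau}^e$; this translates into opposite signs in $\pm\,\bm{\tau}^e\times\bm{\nu}_{K_i}$, which is precisely what drives the cancellation. Once this convention is pinned down the entire argument is algebraic, and in particular no smallness assumption on $h$ is required.
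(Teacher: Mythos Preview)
Your proof is correct and follows essentially the same approach as the paper: reduce to pointwise cancellation at the endpoints via the NZT edge identity \eqref{eq:NZT-edge}, then verify $\mathscr{M}_a^{K_1}\bm{g}\cdot\bm{n}_1^e + \mathscr{M}_a^{K_2}\bm{g}\cdot\bm{n}_2^e = 0$ by rewriting each term as $(\bm{\nu}_{K_a}\times\bm{g})\cdot(\bm{\nu}_{K_i}\times\bm{n}_i^e)$ and using $\bm{\nu}_{K_1}\times\bm{n}_1^e + \bm{\nu}_{K_2}\times\bm{n}_2^e = 0$. The paper invokes the Binet--Cauchy identity in place of your BAC--CAB/triple-product manipulation and presents the two steps in the opposite order, but the substance is identical.
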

\begin{proof}
Let $a$ and $b$ be the two vertices of $e$, and let $K_1$ and $K_2$ denote the two elements that share $e$ as a common edge. By the Binet-Cauchy identity, for any $\bm{x} \in \mathbb{R}^3$, 
    $\mathscr{M}_a^{K_j}\bm{x}\cdot \bm{n}_j^e=(\bm{\nu}_{K_a}\times \bm{x})\cdot (\bm{\nu}_{K_j}\times \bm{n}_j^e),$ 
which implies
\begin{align}\label{eq:Mka_div}
\mathscr{M}_a^{K_1}\bm{x}\cdot\bm{n}_1^e+\mathscr{M}_a^{K_2}\bm{x}\cdot\bm{n}_2^e=(\bm{\nu}_{K_a}\times \bm{x})\cdot (\bm{\nu}_{K_1}\times \bm{n}_1^e+\bm{\nu}_{K_2}\times \bm{n}_2^e)=0.
\end{align}
Choosing $\bm{x} = \nabla_{\Gamma_h}v|_{K_a}(a)$ and using the definition of $V_h$ in \eqref{eq:sNZT-space}, it follows that $[\nabla_{\Gamma_h}v \cdot \bm{n}]|_e(a) = 0$. Similarly, $[\nabla_{\Gamma_h}v \cdot \bm{n}]|_e(b) = 0$. Invoking the NZT element property \eqref{eq:NZT-edge}, the desired result is established.
\end{proof}

For the NZT finite element space $V_h$ on discrete surfaces, the vertex derivative values across different elements are determined instead by the discrete Piola transform \eqref{def:Mka}, which results in the loss of $H^1$-conformity. Nevertheless, we will demonstrate that any function in this space is close to an $H^1$-conforming relative. To this end, we first present the following lemma.
\begin{lemma}[local jump estimates] \label{lm:sNZT-jump}
For any $v\in V_h$, it holds that
\begin{subequations} \label{eq:sNZT-jump}
\begin{align}
\big| \llbracket \nabla_{\Gamma_h}v \rrbracket|_e (a) \big|& \lesssim h \big| \nabla_{\Gamma_h} v|_K(a) \big|  \qquad \forall K \in \mathcal{T}_h, e \subset \partial K, a \in \partial e, \label{eq:sNZT-jump1} \\
\| [v] \|_{L^2(e)} &\lesssim h^{\frac{3}{2}} \| \nabla_{\Gamma_h}v \|_{L^2(K)} \quad \forall K \in \mathcal{T}_h, e \subset \partial K. 
\label{eq:sNZT-jump2}
\end{align}
\end{subequations}
\end{lemma}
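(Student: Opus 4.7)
The plan is to prove \eqref{eq:sNZT-jump1} first and use it as the main input to \eqref{eq:sNZT-jump2}. For \eqref{eq:sNZT-jump1}, I would unpack the formula \eqref{def:Mka} into the convenient form $\mathscr{M}_a^K\bm{x} = (\bm{\nu}_{K_a}\cdot \bm{\nu}_K)\bm{x} - (\bm{\nu}_K\cdot \bm{x})\bm{\nu}_{K_a}$. Taking $\bm{x} = \nabla_{\Gamma_h}v|_{K_a}(a)$, which lies in the tangent plane of $K_a$ (so $\bm{\nu}_{K_a}\cdot\bm{x}=0$), and using $|\bm{\nu}_{K_a}-\bm{\nu}_K|\lesssim h$, one obtains $|\bm{\nu}_{K_a}\cdot\bm{\nu}_K - 1|\lesssim h^2$ together with $|\bm{\nu}_K\cdot \bm{x}| = |(\bm{\nu}_K - \bm{\nu}_{K_a})\cdot\bm{x}|\lesssim h|\bm{x}|$, yielding $|\mathscr{M}_a^K\bm{x} - \bm{x}|\lesssim h|\bm{x}|$. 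Applying this to both elements $K_1, K_2$ sharing $e$, together with the vertex derivative relation in the definition \eqref{eq:sNZT-space} of $V_h$ and the triangle inequality, gives $|\llbracket\nabla_{\Gamma_h}v\rrbracket|_e(a)|\lesssim h\,|\nabla_{\Gamma_h}v|_{K_a}(a)|$. Since the same perturbation bound implies $|\nabla_{\Gamma_h}v|_K(a)|\simeq|\nabla_{\Gamma_h}v|_{K_a}(a)|$ for sufficiently small $h$, \eqref{eq:sNZT-jump1} follows.

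For \eqref{eq:sNZT-jump2}, I would exploit the fact that $v|_K$ restricted to $e$ is cubic in arc length (as noted after \eqref{eq:shape-NZT}, since $b_K$ vanishes on $e$), so $[v]|_e$ is a cubic polynomial on an interval of length $|e|\simeq h$. The vertex DoFs in \eqref{eq:sNZT-space} give $[v](a_j)=0$ at both endpoints $a_1, a_2$, while the tangential derivative at each endpoint reads $[\partial_{\bm{\tau}^e} v](a_j) = \llbracket\nabla_{\Gamma_h}v\rrbracket|_e(a_j)\cdot\bm{\tau}^e$, controlled by \eqref{eq:sNZT-jump1}. A direct scaling argument shows that a cubic $p$ on $[0,L]$ with $p(0)=p(L)=0$ satisfies $\|p\|_{L^2(0,L)}\lesssim L^{3/2}(|p'(0)|+|p'(L)|)$. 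Combining this with \eqref{eq:sNZT-jump1} gives $\|[v]\|_{L^2(e)}\lesssim h^{5/2}\sum_{j=1,2}|\nabla_{\Gamma_h}v|_K(a_j)|$. Finally, a standard inverse estimate on the finite-dimensional shape space yields $|\nabla_{\Gamma_h}v|_K(a_j)|\lesssim h^{-1}\|\nabla_{\Gamma_h}v\|_{L^2(K)}$, establishing \eqref{eq:sNZT-jump2}.

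The main obstacle is the $\mathcal{O}(h)$ bound $|\mathscr{M}_a^K\bm{x} - \bm{x}|\lesssim h|\bm{x}|$ for vectors $\bm{x}$ tangent to $K_a$. This is somewhat subtle because $\mathscr{M}_a^K$ is only an $\mathcal{O}(1)$ perturbation of the identity in general; the $\mathcal{O}(h)$ smallness is entirely due to the tangentiality of the argument combined with $|\bm{\nu}_{K_a}-\bm{\nu}_K|\lesssim h$, which makes the ``bad'' component $(\bm{\nu}_K\cdot\bm{x})\bm{\nu}_{K_a}$ small. Once this observation is in place, the rest of the argument, namely vertex continuity in $V_h$, the cubic structure of $V(K)|_e$, the one-dimensional scaling estimate, and the inverse estimate, is fairly routine.
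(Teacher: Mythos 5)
Your proposal is correct and follows essentially the same path as the paper: both arguments bound $|\mathscr{M}_a^K\bm{x}-\bm{x}|\lesssim h|\bm{x}|$ for $\bm{x}$ tangent to $K_a$ (the paper writes this as $|(\mathscr{M}_a^K-\textbf{P}_{K_a})\bm{x}|$, while you unpack the formula into components, but the content is identical), then combine with the triangle inequality for \eqref{eq:sNZT-jump1}, and both use the fact that $[v]|_e$ is cubic with vanishing endpoint values together with a scaling argument and an inverse estimate (the paper passes through $\|\nabla_{\Gamma_h}v\|_{L^\infty(K)}$, while you apply a pointwise-to-$L^2$ inverse bound directly) for \eqref{eq:sNZT-jump2}.
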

\begin{proof} 
Applying \eqref{def:Mka}, and noticing that $|\bm{\nu}_K - \bm{\nu}_{K_a}| \lesssim h$, we have
$$
\begin{aligned}
\big|\nabla_{\Gamma_h}v|_K(a)-\nabla_{\Gamma_h}v|_{K_a}(a)\big| &
= \Big|\Big(\bm{\nu}_{K_a}\cdot \bm{\nu}_K \Big[\textbf{I} - \frac{\bm{\nu}_{K_a}\otimes \bm{\nu}_K}{\bm{\nu}_{K_a}\cdot \bm{\nu}_K}\Big]-\textbf{P}_{K_a} \Big) \nabla_{\Gamma_h}v|_{K_a}(a) \Big|\\
&\lesssim h \big|\nabla_{\Gamma_h}v|_{K_a}(a)\big| \qquad \forall a \in \mathcal{V}_h, K \in \mathcal{T}_a.
\end{aligned}
$$
For any $K \in \mathcal{T}_h$, $e \subset \partial K$, $a\in \partial e$, let $e= \partial K \cap \partial K'$. Then, using triangle inequality yields
$$
\begin{aligned}
\big| \llbracket\nabla_{\Gamma_h}v\rrbracket|_e(a) \big| & \leq 
\big| \nabla_{\Gamma_h}v|_K(a) - \nabla_{\Gamma_h}v|_{K_a}(a) \big| +
\big| \nabla_{\Gamma_h}v|_{K'}(a) - \nabla_{\Gamma_h}v|_{K_a}(a) \big| 
 \\
&\lesssim h \big| \nabla_{\Gamma_h}v|_{K_{a}}(a) \big| 
 \lesssim h \big|\nabla_{\Gamma_h}v|_{K}(a)\big|.
\end{aligned}
$$
This proves \eqref{eq:sNZT-jump1}. Let $a_i~(i=1,2)$ be the vertices of $e$.
Note that $[v](a_i) = 0$ and $[v]|_e \in \mathcal{P}_3(e)$, then by applying the standard scaling argument, we obtain \eqref{eq:sNZT-jump2} by 
\begin{align*}
 \| [v] \|_{L^2(e)} &\lesssim h^{\frac{3}{2}} \sum_{i=1,2} \big| \llbracket\nabla_{\Gamma_h}v\rrbracket|_e(a_i) \cdot \bm{\tau}^e \big|
 \leq h^{\frac{3}{2}}\sum_{i=1,2} \big| \llbracket\nabla_{\Gamma_h}v\rrbracket|_e(a_i) \big|\\
 &\lesssim h^{\frac{5}{2}} \| \nabla_{\Gamma_h}v\|_{L^{\infty}(K)}
  \lesssim h^{\frac{3}{2}} \|\nabla_{\Gamma_h}v\|_{L^2(K)}.
\end{align*}
\end{proof}

Below, we define the {\it $H^1$-conforming relative} $\Pi_h^c v$ for any $v \in V_h$. The definition is as follows: for any $K \in \mathcal{T}_h$, $\Pi_h^c v|_K \in V(K)$ and satisfies
\begin{equation}\label{eq:H1-relative}
\Pi_h^c v(a) := v(a), \quad 
(\nabla_{\Gamma_h} \Pi_h^c v \cdot \bm{\tau}^e)(a) := \{\nabla_{\Gamma_h} v \cdot \bm{\tau}^e\}(a) 
\quad \forall a \in \partial e,\ e \subset \partial K.
\end{equation}
Since the NZT element is uniquely determined by the function values and derivative values at the vertices (see subsection \ref{subsec:NZT}), the definition of $\Pi_h^c v$ is therefore well-posed. Moreover, note that $\Pi_h^c v|_e \in \mathcal{P}_3(e)$, so $\Pi_h^c v$ is continuous across any edge $e \in \mathcal{E}_h$, i.e., $\Pi_h^c: V_h \to C^0(\Gamma_h)$.

\begin{lemma}[$H^1$-conforming relative] \label{lm:H1-relative}
 For any $v\in V_h$, the $H^1$-conforming relative \eqref{eq:H1-relative} satisfies
 \begin{equation}\label{eq:sNZT-relative}
     \| v- \Pi_h^c v\|_{L^2(K)} \lesssim h \|v\|_{L^2(K)}, \quad  |v-\Pi_h^c v|_{H^1(K)}\lesssim h|v|_{H^1(K)} \quad \forall K \in \mathcal{T}_h.
 \end{equation}
\end{lemma}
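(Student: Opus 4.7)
The plan is to write $w := v - \Pi_h^c v$, which lies in $V(K)$ elementwise, and to exhibit its defining DoFs as $\mathcal{O}(h)$-small. First, by \eqref{eq:H1-relative} the vertex-value DoFs vanish: $w(a) = 0$ for every $a \in \mathcal{V}_K$. For each edge $e \subset \partial K$ through $a$, writing $e = \partial K \cap \partial K'$, I would compute
\[
\nabla_{\Gamma_h} w|_K(a) \cdot \bm{\tau}^e \;=\; \nabla_{\Gamma_h} v|_K(a) \cdot \bm{\tau}^e - \{\nabla_{\Gamma_h} v \cdot \bm{\tau}^e\}(a) \;=\; \tfrac{1}{2}\, \llbracket \nabla_{\Gamma_h} v \rrbracket|_e(a) \cdot \bm{\tau}^e,
\]
so that the jump estimate \eqref{eq:sNZT-jump1} yields $|\nabla_{\Gamma_h} w|_K(a) \cdot \bm{\tau}^e| \lesssim h\,|\nabla_{\Gamma_h} v|_K(a)|$ for every such edge.

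Second, since $\nabla_{\Gamma_h} w|_K(a)$ lies in the (two-dimensional) tangent plane of the flat triangle $K$, and the two edges of $K$ emanating from $a$ supply two tangent directions whose linear independence is controlled uniformly in $h$ by shape regularity, the per-edge scalar bounds upgrade to a full vector bound
\[
|\nabla_{\Gamma_h} w|_K(a)| \lesssim h\,|\nabla_{\Gamma_h} v|_K(a)| \qquad \forall a \in \mathcal{V}_K.
\]

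Third, to convert this vertex bound into the norm estimates of \eqref{eq:sNZT-relative}, I would pass to a reference triangle. Because $V(\hat K)$ is finite dimensional with vertex values and vertex gradients forming a unisolvent set of DoFs, every norm on $V(\hat K)$ is equivalent to the max of the DoF values. Combining $w(a) = 0$ with the standard affine scalings $\|\cdot\|_{L^2(K)} \sim h\,\|\cdot\|_{L^2(\hat K)}$, $|\cdot|_{H^1(K)} \sim |\cdot|_{H^1(\hat K)}$, and $|\nabla_{\hat x}\hat w(a)| \sim h\,|\nabla_{\Gamma_h} w(a)|$ then gives
\[
\|w\|_{L^2(K)} \lesssim h^2 \max_{a \in \mathcal{V}_K} |\nabla_{\Gamma_h} w|_K(a)|, \qquad |w|_{H^1(K)} \lesssim h \max_{a \in \mathcal{V}_K} |\nabla_{\Gamma_h} w|_K(a)|.
\]
Substituting the vertex bound and finally invoking the standard inverse estimates $|\nabla_{\Gamma_h} v|_K(a)| \lesssim h^{-1}|v|_{H^1(K)} \lesssim h^{-2}\|v\|_{L^2(K)}$ on the finite-dimensional space $V(K)$ produces exactly \eqref{eq:sNZT-relative}.

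The argument is largely bookkeeping once Lemma \ref{lm:sNZT-jump} is in hand; the only non-routine point is the passage from the scalar tangential bounds along each edge to a full vertex-gradient vector bound, which relies essentially on shape regularity to keep the spanning constant uniform in $h$.
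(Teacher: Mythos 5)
Your proposal is correct and follows essentially the same route as the paper's proof: both hinge on the vanishing vertex-value DoFs of $w = v - \Pi_h^c v$, upgrade the per-edge tangential jump bounds from \eqref{eq:sNZT-jump1} to a full vertex-gradient bound via shape regularity, and then close with standard scaling and inverse estimates. The only cosmetic difference is that the paper organizes the scaling through intermediate $W^1_\infty$ and $L^\infty$ bounds before converting to the $L^2$ and $H^1$ statements, whereas you scale directly to $L^2$/$H^1$ against the vertex-gradient DoFs and finish with inverse inequalities — the same content, differently packaged.
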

\begin{proof} Denote $v_c := \Pi_h^c v$. For any vertex $a$ of $K$, using the definition of $\Pi_h^c$ in \eqref{eq:H1-relative}, we have 
\begin{align*}
|\nabla_{\Gamma_h}(v-v_c)|_K(a)| 
\lesssim \sum_{e \subset \partial K, \partial e \ni a} \big|\nabla_{\Gamma_h} (v-v_c)
|_K(a) \cdot \bm{\tau}^e\big| 
\leq  \sum_{e \subset \partial K, \partial e \ni a} \big|\llbracket\nabla_{\Gamma_h}v \rrbracket|_e (a)\big|.
\end{align*}
Note that $(v - v_c)|_K \in V(K)$ and $(v - v_c)|_K(a) = 0$ for all $a \in \mathcal{V}_K$. Therefore, we apply \eqref{eq:sNZT-jump1} and the standard scaling arguments to obtain
$$
\begin{aligned}
|v-v_c|_{W^1_{\infty}(K)} &\lesssim \max_{a \in \mathcal{V}_K} \big| \nabla_{\Gamma_h}(v-v_c)|_K(a) \big| 
\lesssim \max_{a \in \mathcal{V}_K} \sum_{e \subset \partial K, \partial e \ni a} \big|\llbracket\nabla_{\Gamma_h}v \rrbracket|_e (a)\big| \\
& \lesssim h \max_{a \in \mathcal{V}_K}  \big|\nabla_{\Gamma_h}v|_K(a) \big| \lesssim h|v|_{W^1_{\infty}(K)}.
\end{aligned}
$$ 
By further using $(v - v_c)|_K(a) = 0$ for $a \in \mathcal{V}_K$ and the inverse inequality, we obtain
\begin{align*}
|v-v_c|_{L^{\infty}(K)}\lesssim h|v-v_c|_{W^1_{\infty}(K)}\lesssim  h^2|v|_{W^1_{\infty}(K)}\lesssim h |v|_{L^{\infty}(K)}.
\end{align*}
By applying the standard scaling argument separately to the two inequalities above, we can derive \eqref{eq:sNZT-relative}.
\end{proof}

\subsection{Approximation properties} Given $w \in C^1(\gamma) \cap H_h^3(\gamma)$, using the definition of $V_h$ in \eqref{eq:sNZT-space} and the surface Piola transform \eqref{def:Piola_invp}, a unique function $\tilde{I}_h w \in V_h$ is defined by
\begin{equation} \label{eq:tildeI}
\tilde{I}_h w(a) = w^e(a), \quad \nabla_{\Gamma_h} \tilde{I}_h w|_{K_a}(a) = \widebreve{\nabla_\gamma w}|_{K_a}(a), \quad \forall a\in \mathcal{V}_h.
\end{equation}
The integral-free subspace projection $I_h w \in \mathring{V}_{h}$ is then defined as
\begin{equation} \label{eq:I}
I_h w := \tilde{I}_h w - \frac{1}{|\Gamma_h|}\int_{\Gamma_h} \tilde{I}_h w \, \mathrm{d}\sigma_h.
\end{equation}

\begin{lemma}[approximation] \label{lm:approximation}
For $w \in C^1(\gamma) \cap H_h^3(\gamma)$, it holds that 
\begin{subequations}
\begin{align} 
\| w^e - \tilde{I}_h w \|_{H^m(K)} &\lesssim h^{3-m} \|w\|_{H^3({K^\gamma})} 
\quad \forall K \in \mathcal{T}_h, ~0\leq m\leq 3, \label{eq:tildeI-L0} \\
\| \widebreve{\nabla_\gamma w} - \nabla_{\Gamma_h} I_h w \|_{H^m(K)} &\lesssim h^{2-m} \| w \|_{H^3({K^\gamma})} 
\quad \forall K \in \mathcal{T}_h, ~0\leq m\leq 2. \label{eq:I-D1}
\end{align}
Moreover, if $w \in C^1(\gamma) \cap H_h^3(\gamma) \cap \mathring{L}^2(\gamma)$, it holds that
\begin{equation} \label{eq:I-L0}
\|w^e - I_h w\|_{H_h^m(\Gamma_h)} \lesssim h^{\min\{3-m,2\}} \|w\|_{H_h^3(\gamma)} \quad 0 \leq m \leq 3.
\end{equation}
\end{subequations}
\end{lemma}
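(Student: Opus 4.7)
The plan is to establish (a) first---from which (b) and (c) largely follow---and to handle them in that order.

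For (a), I would introduce on each element $K$ the auxiliary planar NZT interpolant $\bar I_h w\in V(K)$ of the scalar $w^e$, defined by $\bar I_h w(a)=w^e(a)$ and $\nabla_{\Gamma_h}\bar I_h w|_K(a)=\nabla_{\Gamma_h}w^e|_K(a)$ at each $a\in\mathcal V_K$. Since $V(K)\supset\mathcal P_2(K)$ and $\bar I_h$ preserves $\mathcal P_2(K)$, a Bramble--Hilbert argument on the reference triangle combined with the scalar norm equivalence \eqref{eq:scalar-norm-equiv} gives $\|w^e-\bar I_h w\|_{H^m(K)}\lesssim h^{3-m}\|w\|_{H^3(K^\gamma)}$ for $0\le m\le 3$. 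It remains to estimate $\bar I_h w-\tilde I_h w$: this function lies in $V(K)$, vanishes at every vertex of $K$, and its gradient at vertex $a$ equals $\nabla_{\Gamma_h}w^e|_K(a)-\mathscr M_a^K\widebreve{\nabla_\gamma w}|_{K_a}(a)$, which is bounded by $h^2|(\nabla_\gamma w)^e(a)|$ up to a constant by \eqref{eq:Piola-Mka}. Norm equivalence on the finite-dimensional space $V(\hat K)$ combined with affine scaling yields
\[
|\bar I_h w-\tilde I_h w|_{H^m(K)}\lesssim h^{4-m}\|(\nabla_\gamma w)^e\|_{L^\infty(K)}\quad(0\le m\le 3),
\]
and a scaled Sobolev embedding of the form $\|g\|_{L^\infty(K)}\lesssim h^{-1}\|g\|_{L^2(K)}+|g|_{H^1(K)}+h|g|_{H^2(K)}$ applied to $g=(\nabla_\gamma w)^e$ bounds the right-hand side by $h^{3-m}\|w\|_{H^3(K^\gamma)}$, completing (a).

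For (b), first note that $\nabla_{\Gamma_h}I_h w=\nabla_{\Gamma_h}\tilde I_h w$ since the two interpolants differ by a constant. The triangle inequality reduces the target to the two pieces
\[
\|\widebreve{\nabla_\gamma w}-\nabla_{\Gamma_h}w^e\|_{H^m(K)}+\|\nabla_{\Gamma_h}(w^e-\tilde I_h w)\|_{H^m(K)},
\]
and the second is $\lesssim h^{2-m}\|w\|_{H^3(K^\gamma)}$ by (a). For the first, I would revisit the proof of Lemma \ref{lm:Piola-derivative} to isolate the $3\times 3$ geometric matrix $A(x)$ defined by $\widebreve{\nabla_\gamma w}-\nabla_{\Gamma_h}w^e=A(x)(\nabla_\gamma w)^e$, and then prove the refined bounds $\|A\|_{L^\infty(K)}\lesssim h^2$, $\|\nabla_{\Gamma_h}A\|_{L^\infty(K)}\lesssim h$, and $\|\nabla_{\Gamma_h}^2A\|_{L^\infty(K)}\lesssim 1$. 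The Leibniz rule together with $\|(\nabla_\gamma w)^e\|_{H^k(K)}\lesssim\|w\|_{H^{k+1}(K^\gamma)}$ then delivers the desired $h^{2-m}$ bound for $0\le m\le 2$.

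For (c), the case $m\ge 1$ follows at once from (a) after summing over $K\in\mathcal T_h$, because the correction $I_h w-\tilde I_h w$ is constant and drops out of every surface derivative. The case $m=0$ requires the mean-zero hypothesis: writing $c:=|\Gamma_h|^{-1}\int_{\Gamma_h}\tilde I_h w\,d\sigma_h$, I would split $\int_{\Gamma_h}\tilde I_h w=\int_{\Gamma_h}(\tilde I_h w-w^e)\,d\sigma_h+\int_{\Gamma_h}w^e\,d\sigma_h$, bound the first by $\lesssim h^3\|w\|_{H^3(\gamma)}$ via (a), and change variables through $\bm{p}$ in the second to convert it, using $\int_\gamma w\,d\sigma=0$, into $\int_\gamma w\bigl((\mu_h\circ\bm{p}^{-1})^{-1}-1\bigr)\,d\sigma$, which is $\lesssim h^2\|w\|_{L^2(\gamma)}$ thanks to $|\mu_h-1|\lesssim h^2$. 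Hence $|c|\lesssim h^2\|w\|_{H^3(\gamma)}$, which is precisely why the estimate saturates at $h^2$ and produces the $\min\{3-m,2\}$ exponent.

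The chief technical obstacle is the refined derivative bounds on the matrix $A$ in step (b). A naive differentiation appears to destroy all $h$-decay because the ambient gradient $\nabla d=\bm{\nu}$ is $O(1)$. The crucial observation is that only surface derivatives on $\Gamma_h$ enter, and $\nabla_{\Gamma_h}d=\textbf{P}_h\bm{\nu}$ satisfies $|\textbf{P}_h\bm{\nu}|\lesssim h$ from $|\bm{\nu}-\bm{\nu}_h|\lesssim h$; the same mechanism yields one extra power of $h$ per surface derivative for each of $\bm{\nu}-\bm{\nu}_h$, $d$, and $\mu_h-1$ appearing in $A$, turning the pointwise statement of Lemma \ref{lm:Piola-derivative} into the full Sobolev estimate needed above.
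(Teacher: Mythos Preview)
Your arguments for (a) and (c) coincide with the paper's; the paper writes $z$ for your $\bar I_h w$ and controls $\max_{a\in\mathcal V_K}|(\nabla_\gamma w)^e(a)|$ through the identity $(\nabla_{\Gamma_h}w^e)|_K(a)=(\nabla_{\Gamma_h}z)|_K(a)$ followed by an inverse estimate on $z$, rather than a scaled Sobolev embedding, but the outcome is the same.

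For (b) you take a genuinely different route. The paper never differentiates the geometric matrix $A$; instead it inserts the linear Lagrange interpolant $\bm{I}_1:\bm{C}(K)\to\bm{\mathcal P}_1(K)$ and splits
\[
\|\widebreve{\nabla_\gamma w}-\nabla_{\Gamma_h}w^e\|_{H^m(K)}
\le\|\widebreve{\nabla_\gamma w}-\bm{I}_1\widebreve{\nabla_\gamma w}\|
+\|\nabla_{\Gamma_h}w^e-\bm{I}_1\nabla_{\Gamma_h}w^e\|
+\|\bm{I}_1(\widebreve{\nabla_\gamma w}-\nabla_{\Gamma_h}w^e)\|.
\]
The first two pieces are $\lesssim h^{2-m}$ by standard $\mathcal P_1$ approximation together with the Piola norm equivalence \eqref{eq:Piola-equivalence}, and the third is a $\bm{\mathcal P}_1$ function whose vertex values are $\lesssim h^2|(\nabla_\gamma w)^e(a)|$ by Lemma~\ref{lm:Piola-derivative}, so scaling finishes. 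This uses only the pointwise estimate already in hand and no Leibniz bookkeeping.

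Your direct approach is viable, but the mechanism you describe is not quite right. The surface derivative $\nabla_{\Gamma_h}(\bm\nu-\bm\nu_h)=\textbf{P}_h\textbf{H}$ is $O(1)$, not $O(h)$: there is no ``extra power of $h$'' for $\bm\nu-\bm\nu_h$ relative to its ambient derivative. What actually rescues the argument is that $\bm\nu-\bm\nu_h$ enters $A$ only through the quadratic form $(\bm\nu-\bm\nu_h)\otimes(\bm\nu-\bm\nu_h)$, so the product rule leaves one surviving $O(h)$ factor after one derivative and none after two, yielding $h^{2-k}$ for $k=0,1,2$ as required. You would also need to verify $\|\nabla_{\Gamma_h}^k(\mu_h-1)\|_{L^\infty(K)}\lesssim h^{2-k}$, which requires unpacking the explicit dependence of $\mu_h$ on $d$, $\textbf{H}$, and $\bm\nu\cdot\bm\nu_h$ and is nowhere recorded in the paper. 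The $\bm{I}_1$ trick bypasses all of this.
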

\begin{proof}
Let $z$ be the elementwise interpolant of $w^e$ to $V(K)$, i.e., $z|_K \in V(K)$ and $z|_K(a) = w^e|_K(a)$ and $\nabla_{\Gamma_h}z|_K(a) = \nabla_{\Gamma_h} w^e|_K(a)$ for all $K \in \mathcal{T}_h$ and $a \in \mathcal{V}_K$. By standard approximation theory, this setup gives
\begin{equation} \label{eq:we-z}
\|w^e-z\|_{H^m(K)} \lesssim h^{3-m} | w^e |_{H^3(K)} \lesssim h^{3-m} \|w\|_{H^3(K^\gamma)} \quad \forall K \in \mathcal{T}_h, ~0 \leq m \leq 3.
\end{equation}

Using the property of $\mathscr{M}_a^K$ from \eqref{eq:Piola-Mka} and the definition of $\tilde{I}_h$ in \eqref{eq:tildeI}, we find
$$
\begin{aligned}
|\nabla_{\Gamma_h}(z - \tilde{I}_hw)|_K(a)| &
= \big|\nabla_{\Gamma_h}w^e|_K(a) - \mathscr{M}_a^K \widebreve{\nabla_\gamma w}|_{K_a}(a)\big|\\
&\lesssim h^2 |(\nabla_\gamma w)^e (a)| \quad \forall K \in \mathcal{T}_a.
\end{aligned}
$$ 
Since $(z - \tilde{I}_h w)|_K \in V(K)$ and $(z - \tilde{I}_h w)|_K(a) = 0$ for all $a \in \mathcal{V}_K$, standard scaling arguments the above inequality yield
\begin{equation} \label{eq:z-Ihw}
 \|z - \tilde{I}_hw\|_{H^m(K)} \lesssim h^{2-m} \max_{a \in \mathcal{V}_K}\big| \nabla_{\Gamma_h}(z-\tilde{I}_h w)|_K(a) \big|  \lesssim h^{4-m} \max_{a \in \mathcal{V}_K} |(\nabla_\gamma w)^e (a)|.
\end{equation} 
Using \eqref{eq:extension-derivative}, the inverse estimates and the standard interpolation results yield
\begin{equation} \label{eq:vertex-grad}
\begin{aligned}
\max_{a \in \mathcal{V}_K} |(\nabla_\gamma w)^e (a)| & 
\lesssim \max_{a \in \mathcal{V}_K} \big| \nabla_{\Gamma_h} w^e|_K(a) \big| 
= \max_{a \in \mathcal{V}_K} \big| \nabla_{\Gamma_h} z|_K(a) \big| \\
 & \lesssim \|\nabla_{\Gamma_h} z\|_{L^\infty(K)} \lesssim h^{-1} \|\nabla_{\Gamma_h} z\|_{L^2(K)} \\
 & \leq h^{-1} (\|\nabla_{\Gamma_h} w^e\|_{L^2(K)} + \|\nabla_{\Gamma_h}(w^e - z)\|_{L^2(K)}) \\
 & \lesssim h^{-1} \big(\|w\|_{H^1({K^\gamma})} + h^2 \|w\|_{H^3({K^\gamma})} \big).
 \end{aligned}
\end{equation}
Substituting \eqref{eq:vertex-grad} into \eqref{eq:z-Ihw} and applying \eqref{eq:we-z} gives \eqref{eq:tildeI-L0}.

Let $\bm{I}_1:\bm{C}(K) \to \bm{\mathcal{P}}_1(K)$ denote the standard Lagrange interpolation. Then, for $0 \leq m \leq 2$, by \eqref{eq:Piola-Mka} and the norm equivalence of the Piola transform \eqref{eq:Piola-equivalence}, it holds that
$$
\begin{aligned}
  \|\widebreve{\nabla_\gamma w}-\nabla_{\Gamma_h}w^e\|_{H^m(K)} 
  & \leq  \|\widebreve{\nabla_\gamma w} - \bm{I}_1\widebreve{\nabla_\gamma w}\|_{H^m(K)}
  + \|\bm{I}_1\nabla_{\Gamma_h}w^e-\nabla_{\Gamma_h}w^e\|_{H^m(K)} \\
  &\quad + \|\bm{I}_1\widebreve{\nabla_\gamma w} - \bm{I}_1 \nabla_{\Gamma_h}w^e\|_{H^m(K)}\\
   &\lesssim h^{2-m} \|\widebreve{\nabla_\gamma w}\|_{H^2(K)} 
   + h^{2-m} \|\nabla_{\Gamma_h}w^e\|_{H^2(K)} \\
   &\quad +h^{1-m}\max_{a \in \mathcal{V}_K} \big|\widebreve{\nabla_\gamma w}|_K(a)-\nabla_{\Gamma_h}w^e|_K(a)\big| \\
   &\lesssim h^{2-m}\|w\|_{H^3(K^\gamma)}+h^{3-m}\max_{a \in \mathcal{V}_K} \big| (\nabla_\gamma w)^e|_K(a)\big|.
    \end{aligned}
$$
Combining \eqref{eq:vertex-grad} and \eqref{eq:tildeI-L0} gives \eqref{eq:I-D1}.

If further $w \in \mathring{L}^2(\gamma)$, then by $|1 - \mu_h| \lesssim h^2$, it holds that
$$ 
\begin{aligned}
\|I_h w - \tilde{I}_h w\|_{L^2(\Gamma_h)} &= \frac{1}{|\Gamma_h|^{1/2}} \left|\int_{\Gamma_h} \tilde{I}_h w \mathrm{d} \sigma_h \right| \\
&\lesssim \left|\int_{\Gamma_h} \tilde{I}_h w - w^e \mathrm{d} \sigma_h \right| + \left|\int_{\Gamma_h} w^e \mathrm{d} \sigma_h - \int_\gamma w \mathrm{d}\sigma\right| \\
&\lesssim h^3 \|w\|_{H_h^3(\gamma)} + \int_{\Gamma_h} |w^e(1-\mu_h)| \mathrm{d} \sigma_h \\
&\lesssim h^3 \|w\|_{H_h^3(\gamma)} + h^2 \|w\|_{L^2(\gamma)},
\end{aligned}
$$ 
which gives \eqref{eq:I-L0}.
\end{proof}

Note that $\tilde{I}_h$ and $I_h$ differ by a constant, and the approximation results involving derivatives apply equally to both. The following presents two corollaries of these approximation results.
\begin{corollary}[jump in the normal derivative of interpolant] \label{lm:normal-jump}
For $w \in C^1(\gamma) \cap H_h^3(\gamma)$, it holds that 
\begin{equation} \label{eq:normal-jump}
h_e^{-1/2}\| [\nabla_{\Gamma_h} I_hw \cdot \bm{n}] \|_{L^2(e)} \lesssim h_e \|w\|_{H^3_h(\omega_{e^\gamma})} \quad \forall e \in \mathcal{E}_h.
\end{equation}
\end{corollary}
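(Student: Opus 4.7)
The plan is to split the jump into a term controlled by approximation theory and a term that vanishes because the continuous tangential gradient is divergence-conforming, then apply a scaled trace inequality combined with Lemma \ref{lm:approximation}.

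The first step is to show that $\widebreve{\nabla_\gamma w} \in \bm{H}({\rm div}_{\Gamma_h}; \Gamma_h)$, so that $[\widebreve{\nabla_\gamma w}\cdot\bm{n}]|_e = 0$ for every $e \in \mathcal{E}_h$. Since $w \in C^1(\gamma) \cap H_h^3(\gamma)$, the tangential gradient $\nabla_\gamma w$ is continuous across the interior edges of $\mathcal{T}_h^\gamma$ and piecewise $H^2$, hence lies in $\bm{H}({\rm div}_\gamma;\gamma)$. The surface Piola transform \eqref{def:Piola_invp} preserves divergence-conformity via \eqref{eq:Piola-div} (applied on each $K \in \mathcal{T}_h$, with matching traces across shared edges following the Binet–Cauchy identity already used in \eqref{eq:Mka_div}). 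Consequently the normal trace of $\widebreve{\nabla_\gamma w}$ is single-valued across each $e$.

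With this observation, the identity
\begin{equation*}
[\nabla_{\Gamma_h} I_h w \cdot \bm{n}]\big|_e
= \big[(\nabla_{\Gamma_h} I_h w - \widebreve{\nabla_\gamma w}) \cdot \bm{n}\big]\big|_e
\end{equation*}
reduces the estimate to bounding the $L^2(e)$-norm of $\nabla_{\Gamma_h} I_h w - \widebreve{\nabla_\gamma w}$ on each of the two elements $K$ with $e\subset\partial K$. I apply the standard scaled trace inequality
\begin{equation*}
\|\bm{\varphi}\|_{L^2(e)} \lesssim h^{-1/2}\|\bm{\varphi}\|_{L^2(K)} + h^{1/2}|\bm{\varphi}|_{H^1(K)}
\end{equation*}
and then invoke Lemma \ref{lm:approximation}, estimate \eqref{eq:I-D1}, with $m=0$ and $m=1$ respectively, yielding
\begin{equation*}
\|\nabla_{\Gamma_h} I_h w - \widebreve{\nabla_\gamma w}\|_{L^2(e)}
\lesssim h^{-1/2}\cdot h^2 \|w\|_{H^3(K^\gamma)} + h^{1/2}\cdot h \|w\|_{H^3(K^\gamma)}
\lesssim h^{3/2}\|w\|_{H^3(K^\gamma)}.
\end{equation*}

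Summing the contributions from the two elements sharing $e$, using the triangle inequality for the jump, and finally multiplying by $h_e^{-1/2} \simeq h^{-1/2}$ (by quasi-uniformity) gives the asserted bound $h_e \|w\|_{H^3_h(\omega_{e^\gamma})}$. The only non-routine point is the justification that $\widebreve{\nabla_\gamma w}$ is globally divergence-conforming on the polyhedral surface; once this is in hand, everything else reduces to a standard trace-plus-approximation argument.
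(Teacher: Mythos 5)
Your proof is essentially the paper's argument: both identify that $\widebreve{\nabla_\gamma w}\in\bm{H}(\mathrm{div}_{\Gamma_h};\Gamma_h)$ has vanishing normal jump, subtract it off inside the jump, and apply a scaled trace inequality together with \eqref{eq:I-D1} for $m=0,1$. One minor note: your attribution of the matching normal traces of $\widebreve{\nabla_\gamma w}$ to the Binet--Cauchy identity \eqref{eq:Mka_div} is slightly off---that identity concerns the discrete map $\mathscr{M}_a^K$ between tangent planes, whereas the single-valuedness of the normal trace of $\mathscr{P}_{\bm{p}^{-1}}\bm{g}$ across edges is the defining flux-preservation property of the Piola transform itself; the conclusion is correct in any case.
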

\begin{proof}
Since $\widebreve{\nabla_\gamma w}\in \bm{H}({\rm div}_{\Gamma_h};\Gamma_h)$, we have $[\widebreve{\nabla_\gamma w} \cdot \bm{n}]|_e = 0$ for all $e \in \mathcal{E}_h$. Therefore, by the standard local trace inequality and \eqref{eq:I-D1}, there holds
$$ 
\begin{aligned}
h_e^{-1/2}&\|[\nabla_{\Gamma_h}I_h w \cdot \bm{n} ]\|_{L^2(e)}
= h_e^{-1/2}\|[(\nabla_{\Gamma_h}I_h w - \widebreve{\nabla_\gamma w}) \cdot \bm{n} ]\|_{L^2(e)} \\
&\lesssim h_e^{-1} \|\nabla_{\Gamma_h}I_h w -\widebreve{\nabla_\gamma w}\|_{L^2(\omega_e)}
+ |\nabla_{\Gamma_h}I_h w - \widebreve{\nabla_\gamma w}|_{H_h^1(\omega_e)} \lesssim h \|w\|_{H^3_h(\omega_{e^\gamma})}.
\end{aligned}
$$  
This proves \eqref{eq:normal-jump}.
\end{proof}

\begin{corollary} \label{co:Delta-interpolation}
For $w \in C^1(\gamma) \cap H_h^3(\gamma)$, it holds that 
\begin{equation} \label{eq:Delta-interpolation}
\|(\Delta_\gamma w)^e - \Delta_{\Gamma_h} I_h w\|_{L^2(K)} \lesssim h \|w\|_{H^3(K)} \quad \forall K \in \mathcal{T}_h.
\end{equation}
\end{corollary}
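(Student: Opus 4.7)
The plan is to start from the Piola-divergence identity \eqref{eq:Piola-Delta}, which expresses the extended Laplace--Beltrami operator as
\[
(\Delta_\gamma w)^e = \mu_h^{-1}\,{\rm div}_{\Gamma_h} \widebreve{\nabla_\gamma w} \quad \text{on } K,
\]
and to compare it with $\Delta_{\Gamma_h} I_h w = {\rm div}_{\Gamma_h} \nabla_{\Gamma_h} I_h w$. Since $\tilde{I}_h w$ and $I_h w$ differ by a constant, their tangential gradients coincide, so we may freely use \eqref{eq:I-D1} for the divergence-level comparison.

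The natural decomposition is
\[
(\Delta_\gamma w)^e - \Delta_{\Gamma_h} I_h w
= (\mu_h^{-1}-1)\,{\rm div}_{\Gamma_h}\widebreve{\nabla_\gamma w}
+ {\rm div}_{\Gamma_h}\bigl(\widebreve{\nabla_\gamma w} - \nabla_{\Gamma_h} I_h w\bigr).
\]
For the first term I would use $|\mu_h^{-1}-1|\lesssim h^2$ together with $\|{\rm div}_{\Gamma_h}\widebreve{\nabla_\gamma w}\|_{L^2(K)} \le \|\widebreve{\nabla_\gamma w}\|_{H^1(K)}$, and then invoke the Piola norm equivalence \eqref{eq:Piola-equivalence} and the scalar norm equivalence \eqref{eq:scalar-norm-equiv} to get a bound of order $h^2\|w\|_{H^2(K^\gamma)}$. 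For the second term I would bound ${\rm div}_{\Gamma_h}$ by the $H^1$ seminorm and apply \eqref{eq:I-D1} with $m=1$, which gives a bound of order $h\,\|w\|_{H^3(K^\gamma)}$.

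Combining these two estimates yields $\|(\Delta_\gamma w)^e - \Delta_{\Gamma_h} I_h w\|_{L^2(K)} \lesssim h\|w\|_{H^3(K^\gamma)}$, and the norm equivalence on $K$ versus $K^\gamma$ delivers exactly the stated inequality. The only subtlety, and the step I would be most careful about, is ensuring the Piola-transform quantity $\widebreve{\nabla_\gamma w}$ is handled via Lemma~\ref{lm:Piola-equivalence} with $m=1$ (rather than $m=2$, which would impose stricter smoothness) so that the whole argument stays within the assumed $H^3$ regularity of $w$.
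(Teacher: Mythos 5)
Your proposal is correct and follows essentially the same route as the paper's proof: split via \eqref{eq:Piola-Delta} into the geometric factor $(\mu_h^{-1}-1){\rm div}_{\Gamma_h}\widebreve{\nabla_\gamma w}$ (order $h^2$) plus ${\rm div}_{\Gamma_h}(\widebreve{\nabla_\gamma w}-\nabla_{\Gamma_h}I_h w)$, controlled by \eqref{eq:I-D1} with $m=1$, then triangle inequality. You spell out the intermediate norm-equivalence steps more explicitly than the paper does, but the decomposition, the key lemmas invoked, and the orders matched are identical.
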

\begin{proof}
By \eqref{eq:Piola-Delta} and $|1 - \mu_h| \lesssim h^2$, there holds 
$$ 
\|(\Delta_\gamma w)^e - {\rm div}_{\Gamma_h}\widebreve{\nabla_\gamma w}\|_{L^2(K)} \lesssim h^2 \|w\|_{H^2(K^\gamma)}.
$$ 
By further utilizing the approximation result \eqref{eq:I-D1} and triangle inequality, the desired estimate can be obtained.
\end{proof}

\subsection{Stabilized nonconforming FEM}
In this subsection, we introduce the surface finite element method for the biharmonic problem \eqref{eq:s-biharmonic}.  The variational formulation of \eqref{eq:s-biharmonic} seeks $u \in \mathring{H}^2(\gamma)$ such that 
\begin{equation} \label{eq:variational-form}
(\Delta_\gamma u, \Delta_\gamma v)_\gamma = (f, v)_\gamma \quad \forall v \in \mathring{H}^2(\gamma).
\end{equation}
The well-posedness of \eqref{eq:variational-form} can be established using the classical Lax-Milgram Lemma. For $C^4$ surface, employing a partition of unity combined with the interior regularity estimate for elliptic equations \cite[Chapter 6.3.1]{evans2022partial} yields
\begin{equation} \label{eq:regularity}
\|u\|_{H^4(\gamma)} \lesssim \|f\|_{L^2(\gamma)}.
\end{equation}
This regularity result can also be found in \cite{larsson2017continuous, cai2024continuous}, given by \cite[Th. 27]{besse2007einstein}. It is worth noting that, although $H^4$ regularity is achieved on $C^4$ surfaces, only $H^3$ regularity of the solution is utilized in the error estimates due to the constraints imposed by the norm equivalence in \eqref{eq:scalar-norm-equiv}.

We define the bilinear form $a_h: V_h \times V_h \to \mathbb{R}$ as
\begin{equation}\label{eq:ah}
    a_h(w,v):=\sum_{K\in \mathcal{T}_h}\int_K \Delta_{\Gamma_h} w\Delta_{\Gamma_h} v\mathrm{d} \sigma_h +\sum_{e\in \mathcal{E}_h} h_e^{-1}\int_e [\nabla_{\Gamma_h}u \cdot \bm{n}] [\nabla_{\Gamma_h}v \cdot \bm{n}]
    \mathrm{d} s_h.
\end{equation}
It is important to emphasize that no artificial parameter in the stabilization term. 
Such a stabilization is not required for planar problem for the NZT finite element method (or other nonconfomring FEMs) with full second-order derivative.
However, for surface biharmonic problems, since the second-order derivative in the first term of the bilinear form is only $\Delta_{\Gamma_h}$, which is weaker than the full second-order derivative. In this sense, the stabilization in \eqref{eq:ah} becomes necessary.

We define the stabilized nonconforming finite element method as follows: Find $u_h\in \mathring{V}_{h}$ such that
\begin{subequations} 
\begin{equation}\label{eq:FEM}
    a_h(u_h,v_h)=l_h(v_h)\quad \forall v_h\in \mathring{V}_{h},
\end{equation}
where the discrete linear functional is
\begin{equation} \label{eq:source}
    l_h(v_h)=\int_{\Gamma_h} f_h v_h\mathrm{d} \sigma_h \quad \text{with} \quad f_h := f^e-\frac{1}{|\Gamma_h|}\int_{\Gamma_h} f^e \mathrm{d} \sigma_h.
\end{equation}
\end{subequations}

Define the discrete energy semi-norm 
\begin{equation} \label{eq:energy-norm}
\triplenorm{v}_h^2 := \| \Delta_{\Gamma_h} v \|_{L^2(\Gamma_h)}^2 
+ \sum_{e\in \mathcal{E}_h}h_e^{-1} \|[\nabla_{\Gamma_h} v \cdot \bm{n}]\|_{L^2(e)}^2.
\end{equation}
Next, we show that this indeed defines a norm on $\mathring{V}_{h}$. To this end, we first show the discrete Poincar\'e inequality on $\mathring{V}_{h}$.

\begin{lemma}[discrete Poincar\'e inequality] For any $v \in \mathring{V}_{h}$, it holds that 
\begin{equation}\label{eq:poincare}
    \|v\|_{L^2(\Gamma_h)}\lesssim |v|_{H_h^1(\Gamma_h)}.
\end{equation}
\end{lemma}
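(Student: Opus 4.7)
The plan is to exploit the $H^1$-conforming relative $\Pi_h^c v$ from Lemma \ref{lm:H1-relative} to reduce the discrete Poincar\'e inequality on the nonconforming space $\mathring{V}_h$ to the standard continuous Poincar\'e inequality on $H^1(\Gamma_h)$. Since $\Pi_h^c v$ is $C^0(\Gamma_h)$ and piecewise polynomial, it lies in $H^1(\Gamma_h)$, so after subtracting its mean the continuous Poincar\'e inequality is directly available. The nonconforming error between $v$ and $\Pi_h^c v$ carries an explicit factor of $h$ and can therefore be absorbed later.

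Concretely, I would set $v_c := \Pi_h^c v$ and $\bar v_c := v_c - |\Gamma_h|^{-1}\int_{\Gamma_h} v_c \, \mathrm{d}\sigma_h$, and split $v = (v - v_c) + \bar v_c + (v_c - \bar v_c)$. Since $\bar v_c \in H^1(\Gamma_h)$ has zero mean, the continuous Poincar\'e inequality on $\Gamma_h$ gives $\|\bar v_c\|_{L^2(\Gamma_h)} \lesssim |\bar v_c|_{H^1(\Gamma_h)} = |v_c|_{H^1(\Gamma_h)}$. By the triangle inequality and Lemma \ref{lm:H1-relative},
$$
|v_c|_{H^1(\Gamma_h)} \leq |v|_{H^1_h(\Gamma_h)} + |v - v_c|_{H^1_h(\Gamma_h)} \lesssim |v|_{H^1_h(\Gamma_h)}.
$$

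Next, since $v \in \mathring{V}_h$ has zero mean on $\Gamma_h$, the mean of $v_c$ is small:
$$
\Big|\int_{\Gamma_h} v_c \, \mathrm{d}\sigma_h\Big| = \Big|\int_{\Gamma_h} (v_c - v) \, \mathrm{d}\sigma_h\Big| \leq |\Gamma_h|^{1/2} \|v - v_c\|_{L^2(\Gamma_h)} \lesssim h \|v\|_{L^2(\Gamma_h)},
$$
again by Lemma \ref{lm:H1-relative}. Combining these bounds with the $L^2$ triangle inequality yields
$$
\|v\|_{L^2(\Gamma_h)} \leq \|v - v_c\|_{L^2(\Gamma_h)} + \|\bar v_c\|_{L^2(\Gamma_h)} + |\Gamma_h|^{-1/2}\Big|\int_{\Gamma_h} v_c \, \mathrm{d}\sigma_h\Big| \lesssim |v|_{H^1_h(\Gamma_h)} + h \|v\|_{L^2(\Gamma_h)}.
$$
For $h$ sufficiently small, the last term is absorbed into the left-hand side, producing \eqref{eq:poincare}.

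The subtle point is the $h$-independence of the Poincar\'e constant on the polyhedral surface $\Gamma_h$. I would justify this by lifting $\bar v_c$ to $\gamma$ via the closest-point projection and using the norm equivalences \eqref{eq:scalar-norm-equiv}: if $w := \bar v_c^\ell$, then $\|w\|_{L^2(\gamma)} \simeq \|\bar v_c\|_{L^2(\Gamma_h)}$ and $|w|_{H^1(\gamma)} \simeq |v_c|_{H^1(\Gamma_h)}$. The mean of $w$ on $\gamma$ differs from that of $\bar v_c$ on $\Gamma_h$ only through the factor $\mu_h$ with $|1-\mu_h|\lesssim h^2$, so $|\int_\gamma w \, \mathrm{d}\sigma| \lesssim h^2 \|\bar v_c\|_{L^2(\Gamma_h)}$; subtracting this small mean and applying the fixed Poincar\'e inequality on $\gamma$ to $w - |\gamma|^{-1}\int_\gamma w$ gives the desired $h$-uniform constant (with a further small term absorbed into the left-hand side). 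This is the only place where the geometric approximation of $\gamma$ by $\Gamma_h$ enters, and it is the main technical hurdle in the argument.
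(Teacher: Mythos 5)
Your proof is correct and follows essentially the same route as the paper: construct the $H^1$-conforming relative $v_c = \Pi_h^c v$, apply the $h$-uniform continuous Poincar\'e inequality on $\Gamma_h$ to the mean-free $\bar v_c$, and use Lemma~\ref{lm:H1-relative} to bound $|v_c|_{H^1(\Gamma_h)}$ and to show that the mean of $v_c$ is $\mathcal{O}(h)\|v\|_{L^2(\Gamma_h)}$, finally absorbing the small $h\|v\|_{L^2(\Gamma_h)}$ term. The one difference is that the paper simply cites \cite[Section 4.2.1]{bonito2020finite} for the $h$-uniform Poincar\'e constant on $\Gamma_h$, whereas you sketch a self-contained justification via lifting to $\gamma$ with the norm equivalences \eqref{eq:scalar-norm-equiv} and the $|1-\mu_h|\lesssim h^2$ bound on the measure distortion; that sketch is sound and is a useful addition, but it does not change the structure of the argument.
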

 \begin{proof}
 Revoking the $H^1$-conforming relative defined in \eqref{eq:H1-relative}, we denote $v_c := \Pi_h^c v \in C^0(\Gamma_h)$, and $\mathring{v}_{c} := v_c - \frac{1}{|\Gamma_h|} \int_{\Gamma_h} v_c \mathrm{d}\sigma_h \in \mathring{H}^1(\Gamma_h)$. Then, the uniform Poincar\'e inequality on $\Gamma_h$ (see \cite[Section 4.2.1]{bonito2020finite}) indicates that 
 $$
 \|\mathring{v}_{c}\|_{L^2(\Gamma_h)} \lesssim |\mathring{v}_{c} |_{H^1(\Gamma_h)} = |v_c|_{H^1(\Gamma_h)},
 $$
 where the hidden constant is independent of $\Gamma_h$.
Using this inequality, combined with the estimates for the $H^1$-conforming relative \eqref{eq:sNZT-relative}, we obtain
\begin{align*}
\|v\|_{L^2(\Gamma_h)} & \leq \|v-v_c\|_{L^2(\Gamma_h)} + \|\mathring{v}_{c}\|_{L^2(\Gamma_h)} 
+ \frac{1}{|\Gamma_h|^{1/2}} \big| \int_{\Gamma_h} v_c \mathrm{d} \sigma_h\big| \\
&\lesssim h \|v\|_{L^2(\Gamma_h)}+ |v_{c}|_{H^1(\Gamma_h)} + \frac{1}{|\Gamma_h|^{1/2}}\big| \int_{\Gamma_h} (v_c - v) \mathrm{d} \sigma_h \big| \\
& \leq h \|v\|_{L^2(\Gamma_h)} + |v|_{H_h^1(\Gamma_h)} + |v - v_c|_{H_h^1(\Gamma_h)} + \|v - v_c\|_{L^2(\Gamma_h)} \\
&\lesssim h \|v\|_{L^2(\Gamma_h)} + (1+h) |v|_{H_h^1(\Gamma_h)},
\end{align*}
where the second inequality uses $\int_{\Gamma_h}v\mathrm{d}\sigma_h=0$. Thus, we have proven \eqref{eq:poincare}, provided that $h$ is appropriately small.
\end{proof}  

\begin{theorem}[energy norm] \label{tm:H1-energy} For any $v \in \mathring{V}_{h}$, it holds that
  \begin{equation}\label{eq:H1-energy}
        \|v\|_{H^1_h(\Gamma_h)} \lesssim \triplenorm{v}_h.
    \end{equation}
\end{theorem}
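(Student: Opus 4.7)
The plan is to start from an elementwise Green's identity. Since every $K\in\mathcal{T}_h$ is a flat triangle, the classical planar Green's formula gives, for any $v\in \mathring{V}_h$,
$$
\int_K |\nabla_{\Gamma_h}v|^2\,\mathrm{d}\sigma_h = -\int_K v\,\Delta_{\Gamma_h}v\,\mathrm{d}\sigma_h + \int_{\partial K} v\,(\nabla_{\Gamma_h}v\cdot\bm{n})\,\mathrm{d}s_h.
$$
Summing over $K$ and regrouping edge contributions with the paper's sign convention in \eqref{eq:jump-average}, the boundary term splits as $\sum_{e}\int_e\{v\}\,[\nabla_{\Gamma_h}v\cdot\bm{n}]\,\mathrm{d}s_h + \sum_{e}\int_e [v]\,\{\nabla_{\Gamma_h}v\cdot\bm{n}\}\,\mathrm{d}s_h$. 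The goal is then to control each of the resulting three terms by either $|v|_{H^1_h(\Gamma_h)}\triplenorm{v}_h$ or $h\,|v|_{H^1_h(\Gamma_h)}^2$, so that absorbing the latter for $h$ small yields the bound on $|v|_{H^1_h(\Gamma_h)}$, and Poincar\'e \eqref{eq:poincare} then upgrades this to the full $H^1_h$ norm.

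For the volume term, Cauchy--Schwarz combined with \eqref{eq:poincare} gives
$\bigl|\sum_K(v,\Delta_{\Gamma_h}v)_K\bigr|\le \|v\|_{L^2(\Gamma_h)}\|\Delta_{\Gamma_h}v\|_{L^2(\Gamma_h)}\lesssim |v|_{H^1_h(\Gamma_h)}\,\triplenorm{v}_h.$
For the $\{v\}[\nabla_{\Gamma_h}v\cdot\bm{n}]$ sum, I would split as
$\bigl(\sum_e h_e\|\{v\}\|_{L^2(e)}^2\bigr)^{1/2}\bigl(\sum_e h_e^{-1}\|[\nabla_{\Gamma_h}v\cdot\bm{n}]\|_{L^2(e)}^2\bigr)^{1/2}$; the second factor is at most $\triplenorm{v}_h$, and the first, by the standard local trace inequality $\|v\|_{L^2(e)}^2\lesssim h^{-1}\|v\|_{L^2(K)}^2+h\|\nabla_{\Gamma_h}v\|_{L^2(K)}^2$ and \eqref{eq:poincare}, is $\lesssim |v|_{H^1_h(\Gamma_h)}$.

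The delicate term is $\sum_e\int_e [v]\{\nabla_{\Gamma_h}v\cdot\bm{n}\}\,\mathrm{d}s_h$, and this is where the NZT-specific jump estimate \eqref{eq:sNZT-jump2} is crucial. Using $\|[v]\|_{L^2(e)}\lesssim h^{3/2}\|\nabla_{\Gamma_h}v\|_{L^2(K)}$ and, for the average, the trace inequality followed by the inverse inequality on the piecewise-polynomial $v$ to obtain $\|\{\nabla_{\Gamma_h}v\cdot\bm{n}\}\|_{L^2(e)}\lesssim h^{-1/2}\|\nabla_{\Gamma_h}v\|_{L^2(\omega_e)}$, Cauchy--Schwarz yields a total of $\lesssim h\,|v|_{H^1_h(\Gamma_h)}^2$, which for $h$ sufficiently small can be absorbed into the left-hand side.

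Putting these three bounds together gives $|v|_{H^1_h(\Gamma_h)}^2\le C_1|v|_{H^1_h(\Gamma_h)}\triplenorm{v}_h+C_2h\,|v|_{H^1_h(\Gamma_h)}^2$, so for $h$ small enough one concludes $|v|_{H^1_h(\Gamma_h)}\lesssim \triplenorm{v}_h$; a final invocation of \eqref{eq:poincare} delivers \eqref{eq:H1-energy}. The main obstacle is precisely the $[v]\{\nabla_{\Gamma_h}v\cdot\bm{n}\}$ term: a generic trace estimate would only give $\|[v]\|_{L^2(e)}\lesssim h^{1/2}\|v\|_{L^2(K)}+h^{3/2}\|\nabla_{\Gamma_h}v\|_{L^2(K)}$ in the wrong scaling after pairing with the average, and the absorption would fail; the extra $h$ saved by \eqref{eq:sNZT-jump2}, which in turn exploits that $[v]$ vanishes at the endpoints of $e$ and is cubic along $e$, is exactly what makes the argument work.
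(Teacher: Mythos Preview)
Your proposal is correct and follows essentially the same route as the paper: elementwise Green's identity, then bound the volume term and the $\{v\}[\nabla_{\Gamma_h}v\cdot\bm{n}]$ sum by $\triplenorm{v}_h$ times an $H^1_h$-controlled quantity via trace inequalities, handle the $[v]\{\nabla_{\Gamma_h}v\cdot\bm{n}\}$ sum using the NZT jump estimate \eqref{eq:sNZT-jump2} to produce an absorbable $h|v|_{H^1_h}^2$, and finish with Poincar\'e \eqref{eq:poincare}. The only cosmetic difference is that the paper keeps $\|v\|_{L^2(\Gamma_h)}$ in the first two bounds and invokes Poincar\'e once at the end, whereas you apply it immediately to write everything in terms of $|v|_{H^1_h}$.
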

\begin{proof}Integral by part
$$
\begin{aligned}
|v|^2_{H_h^1(\Gamma_h)}&= \sum_{K \in \mathcal{T}_h} \int_{K} \nabla_{\Gamma_h}v \cdot \nabla_{\Gamma_h}v \mathrm{d}\sigma_h  \\
&= -\sum_{K\in \mathcal{T}_h} \int_K\Delta_{\Gamma_h}v\cdot v\mathrm{d} \sigma_h
+\sum_{e\in \mathcal{E}_h} \int_e [\nabla_{\Gamma_h}v \cdot \bm{n}] \{v\}\mathrm{d}s_h \\
& ~~~+\sum_{e\in \mathcal{E}_h} \int_e \{\nabla_{\Gamma_h}v \cdot \bm{n} \} [v] \mathrm{d} s_h.
\end{aligned}
$$
Next, by the definition of energy (semi-)norm \eqref{eq:energy-norm} and the trace inequality
$$
\begin{aligned}
-\sum_{K\in \mathcal{T}_h} \int_K\Delta_{\Gamma_h}v\cdot v \mathrm{d} \sigma_h & \leq \triplenorm{v}_h \|v\|_{L^2(\Gamma_h)}, \\
\sum_{e\in \mathcal{E}_h} \int_e [\nabla_{\Gamma_h}v \cdot \bm{n}] \{v\}\mathrm{d}s_h & \leq 
\triplenorm{v}_h \left( \sum_{e\in \mathcal{E}_h}h_e \|\{v\}\|_{L^2(e)}^2 \right)^{1/2} \lesssim \triplenorm{v}_h \|v\|_{L^2(\Gamma_h)}.
\end{aligned}
$$
Invoking the jump estimate in \eqref{eq:sNZT-jump2}, there holds
$$
\sum_{e\in \mathcal{E}_h} \int_e \{\nabla_{\Gamma_h} v \cdot \bm{n}\}[v] \mathrm{d} s_h 
\lesssim  \| \nabla_{\Gamma_h} v\|_{L^2(\Gamma_h)}  
\left( \sum_{e \in \mathcal{E}_h} h_e^{-1}\| [v] \|_{L^2(e)}^2 \right)^{1/2} \lesssim  h|v|^2_{H_h^1(\Gamma_h)}.
$$
Combining the above estimates yields
$$
|v|_{H_h^1(\Gamma_h)}^2 \lesssim \triplenorm{v}_h \|v\|_{L^2(\Gamma_h)} + h|v|^2_{H_h^1(\Gamma_h)}.
$$
Applying the discrete Poincar\'e inequality \eqref{eq:poincare} and assuming $h$ is sufficiently small, we then arrive at \eqref{eq:H1-energy}.
\end{proof}

At this point, we have shown that $\triplenorm{\cdot}_h$ is not only a norm on $\mathring{V}_{h}$, but it also controls the broken $H^1$ norm on $\Gamma_h$. By the classical Lax-Milgram Lemma, the nonconforming FEM \eqref{eq:FEM} is well-posed.

\section{Error estimates} \label{sec:analysis}

This section provides error estimates for the energy norm and broken $H^1$ norm of the numerical scheme \eqref{eq:FEM}. The broken $H^1$ norm error estimate based on a duality argument is new, whereas previous work \cite{larsson2017continuous} only considered the $L^2$ error estimate. The estimates in this section primarily follow the classical analysis for nonconforming elements \cite{shi1990error}, while incorporating a refined analysis of the geometric error. It is worth noting that the estimates only require the solution to belong to $H^3(\gamma)$ for both orignial and dual problems.

\subsection{Some auxiliary lemmas} We first present some estimates for the jump and the source terms.
\begin{lemma}[jump estimates] \label{lm:jump-estimate} It holds that 
\begin{subequations} \label{eq:jump-estimate}
\begin{align} 
&\sum_{e\in \mathcal{E}_h} \big|  \langle (\Delta_\gamma w)^e, [\nabla_{\Gamma_h} v_h \cdot \bm{n}]\rangle_e \big| 
\lesssim h \|w\|_{H^3(\gamma)} \triplenorm{v_h}_h  \quad \forall w \in H^3(\gamma), v_h \in V_h, \label{eq:jump-estimate1} \\
& \sum_{e\in \mathcal{E}_h}\big| (\Delta_\gamma w)^e, [\nabla_{\Gamma_h} I_h v \cdot \bm{n}]\rangle_e \big|
\lesssim h^2 \|w\|_{H^3(\gamma)} \|v\|_{H^3(\gamma)} \quad \forall w,v \in H^3(\gamma). \label{eq:jump-estimate2}
\end{align}  
\end{subequations}  
\end{lemma}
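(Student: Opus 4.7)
The plan is to exploit the weak $\bm{H}({\rm div}_{\Gamma_h};\Gamma_h)$-conformity of $\nabla_{\Gamma_h} v_h$ established in \eqref{eq:NZT_D1jumpaverage}, namely $\int_e [\nabla_{\Gamma_h} v_h \cdot \bm{n}]\mathrm{d} s_h = 0$ for every $v_h \in V_h$ and every $e \in \mathcal{E}_h$. This allows us to subtract from $(\Delta_\gamma w)^e$ its $L^2(e)$-mean $\pi_e^0 (\Delta_\gamma w)^e$ edgewise without changing the inner product, which is the standard ``consistency trick'' for nonconforming methods. The only genuinely new ingredient is using $I_h v$-specific approximation (Corollary~\ref{lm:normal-jump}) to gain a second power of $h$ in \eqref{eq:jump-estimate2}.

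For \eqref{eq:jump-estimate1}, I would first write
$$
\langle (\Delta_\gamma w)^e, [\nabla_{\Gamma_h} v_h\cdot \bm{n}]\rangle_e = \langle (\Delta_\gamma w)^e - \pi_e^0 (\Delta_\gamma w)^e, [\nabla_{\Gamma_h} v_h\cdot \bm{n}]\rangle_e
$$
and then apply the Bramble--Hilbert-type trace estimate $\|\phi - \pi_e^0 \phi\|_{L^2(e)} \lesssim h_e^{1/2}|\phi|_{H^1(\omega_e)}$ (proved by a standard scaling argument on the reference patch) to $\phi = (\Delta_\gamma w)^e$. Then Cauchy--Schwarz gives
$$
\sum_{e} \big| \langle (\Delta_\gamma w)^e, [\nabla_{\Gamma_h} v_h\cdot\bm{n}]\rangle_e \big| \lesssim \left(\sum_e h_e|(\Delta_\gamma w)^e|^2_{H^1(\omega_e)}\right)^{1/2}\!\!\left(\sum_e h_e^{-1}\|[\nabla_{\Gamma_h} v_h\cdot\bm{n}]\|_{L^2(e)}^2\right)^{1/2}\!\!\!.
$$
The second factor is bounded by $\triplenorm{v_h}_h$, and the first by $h\,\|(\Delta_\gamma w)^e\|_{H^1(\Gamma_h)}\lesssim h\|w\|_{H^3(\gamma)}$ via the norm equivalence \eqref{eq:scalar-norm-equiv} (for $m=1$, applied to $\Delta_\gamma w$).

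For \eqref{eq:jump-estimate2}, the same subtraction step and Cauchy--Schwarz yield
$$
\sum_e \big| \langle (\Delta_\gamma w)^e, [\nabla_{\Gamma_h} I_h v\cdot \bm{n}]\rangle_e \big| \lesssim \sum_e h_e|(\Delta_\gamma w)^e|_{H^1(\omega_e)}\cdot h_e^{-1/2}\|[\nabla_{\Gamma_h} I_h v\cdot \bm{n}]\|_{L^2(e)},
$$
where I have rewritten $h_e^{1/2}\|\cdot\|_{L^2(e)} = h_e\cdot h_e^{-1/2}\|\cdot\|_{L^2(e)}$. Now the key improvement: instead of bounding the last factor by $\triplenorm{I_h v}_h$ (which does not vanish as $h\to 0$), I invoke Corollary~\ref{lm:normal-jump}, which gives the sharper estimate $h_e^{-1/2}\|[\nabla_{\Gamma_h}I_h v\cdot\bm{n}]\|_{L^2(e)} \lesssim h_e\|v\|_{H^3_h(\omega_{e^\gamma})}$. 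A final discrete Cauchy--Schwarz over edges together with $\|(\Delta_\gamma w)^e\|_{H^1(\Gamma_h)}\lesssim \|w\|_{H^3(\gamma)}$ produces the desired $\mathcal{O}(h^2)$ bound.

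There is no serious obstacle here; the only thing to be careful about is to route the estimate through $\pi_e^0$ so that the NZT property \eqref{eq:NZT_D1jumpaverage} can be applied, and, in the second inequality, to split the $h_e^{1/2}$ factor from the trace/Poincar\'e step so that the stronger bound on the normal jump from Corollary~\ref{lm:normal-jump} can be plugged in cleanly. Had we only had $\triplenorm{I_h v}_h$ available for the jump, we would lose a factor of $h$ in \eqref{eq:jump-estimate2}; this is precisely where the interpolation-specific estimate is essential.
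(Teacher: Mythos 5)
Your proposal is correct and follows essentially the same path as the paper's proof: both subtract the edge-wise mean of $(\Delta_\gamma w)^e$ (justified by the weak $\bm{H}({\rm div}_{\Gamma_h};\Gamma_h)$ property \eqref{eq:NZT_D1jumpaverage}), apply a weighted Cauchy--Schwarz over edges, bound the approximation factor by $h\|w\|_{H^3(\gamma)}$ via a Bramble--Hilbert/trace argument and the norm equivalence \eqref{eq:scalar-norm-equiv}, and then control the jump factor by $\triplenorm{v_h}_h$ for \eqref{eq:jump-estimate1} and by Corollary~\ref{lm:normal-jump} for \eqref{eq:jump-estimate2}. The only cosmetic difference is that you perform the discrete Cauchy--Schwarz over edges at the end rather than immediately, which changes nothing.
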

\begin{proof}
Let  $P_e^0: L^2(e) \to \mathcal{P}_0(e)$ be the local $L^2$ projection to constant space on edge $e$. Then, by weak $\bm{H}({\rm div}_{\Gamma_h}; \Gamma_h)$ conformity of $V_h$ in \eqref{eq:NZT_D1jumpaverage} and standard approximation theory, we obtain
\begin{align*}
&\sum_{e\in \mathcal{E}_h} \big| \langle (\Delta_\gamma w)^e, [\nabla_{\Gamma_h} v_h \cdot \bm{n}] \rangle_e \big|
= \sum_{e\in \mathcal{E}_h}   \big| \langle (\Delta_\gamma w)^e - P_e^0 (\Delta_\gamma w)^e, [\nabla_{\Gamma_h} v_h \cdot \bm{n}] \rangle_e \big| \\
&\leq  \Big(\sum_{e\in \mathcal{E}_h} h_e \|(\Delta_\gamma u)^e - P_e^0 (\Delta_\gamma u)^e \|_{L^2(e)}^2 \Big)^{1/2} \
          \Big(\sum_{e\in \mathcal{E}_h} h_e^{-1} \|[\nabla_{\Gamma_h} v_h \cdot \bm{n}]\|_{L^2(e)}^2 \Big)^{1/2} \\
&\leq  \Big(\sum_{e\in \mathcal{E}_h} h_e \|(\Delta_\gamma u)^e - P_e^0 (\Delta_\gamma u)^e \|_{L^2(e)}^2 \Big)^{1/2} \triplenorm{v_h}_h \lesssim h \|u\|_{H^3(\gamma)} \triplenorm{v_h}_h.
\end{align*}
Repeat the above estimate by replacing $v_h$ with $I_h v$, and directly applying \eqref{eq:normal-jump}, we then obtain \eqref{eq:jump-estimate2}.
\end{proof}

\begin{lemma}[source estimates] For $f \in \mathring{L}^2(\gamma)$, it holds that 
\begin{subequations} \label{eq:source-estimate}
\begin{align}
|l_h(v_h) - l(\Pi_h^c v_h)^\ell) | & \lesssim h \|f\|_{L^2(\gamma)} \triplenorm{v_h}_h \quad \forall v_h \in \mathring{V}_{h}, \label{eq:source-estimate1} \\
| l_h(I_h v)-l(v)| &\lesssim h^2 \|f\|_{L^2(\gamma)} \|v\|_{H^3(\gamma)} \quad \forall v \in H^3(\gamma). \label{eq:source-estimate2}
\end{align}
\end{subequations}
\end{lemma}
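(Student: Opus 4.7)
The plan is to rewrite each term on a common surface and then split the difference into an interpolation/relative-approximation part and a geometric part coming from $\mu_h - 1$.

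First, note that every test function used here is mean-free on $\Gamma_h$: $v_h \in \mathring V_h$ by assumption, and $I_h v \in \mathring V_h$ by construction. Consequently the shift in $f_h$ is annihilated, so
\[
l_h(v_h) = \int_{\Gamma_h} f^e v_h \,\mathrm{d}\sigma_h, \qquad l_h(I_h v) = \int_{\Gamma_h} f^e I_h v \,\mathrm{d}\sigma_h.
\]
For the continuous functional, the change of variables through $\bm p$ gives $l(w^\ell) = \int_\gamma f w^\ell \,\mathrm{d}\sigma = \int_{\Gamma_h} f^e w \,\mu_h \mathrm{d}\sigma_h$ for any $w$ on $\Gamma_h$, and similarly $l(v) = \int_{\Gamma_h} f^e v^e \mu_h \,\mathrm{d}\sigma_h$ for $v$ on $\gamma$.

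For \eqref{eq:source-estimate1} I would now write
\[
l_h(v_h) - l((\Pi_h^c v_h)^\ell) = \int_{\Gamma_h} f^e (v_h - \Pi_h^c v_h)\,\mathrm{d}\sigma_h + \int_{\Gamma_h} f^e \,\Pi_h^c v_h\,(1-\mu_h)\,\mathrm{d}\sigma_h.
\]
The first piece is controlled via the $H^1$-relative estimate \eqref{eq:sNZT-relative}, which gives $\|v_h - \Pi_h^c v_h\|_{L^2(\Gamma_h)} \lesssim h \|v_h\|_{L^2(\Gamma_h)}$, followed by the discrete Poincar\'e inequality \eqref{eq:poincare} and the energy control \eqref{eq:H1-energy} to replace $\|v_h\|_{L^2(\Gamma_h)}$ by $\triplenorm{v_h}_h$. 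The second piece uses $|1-\mu_h| \lesssim h^2$, a triangle inequality $\|\Pi_h^c v_h\|_{L^2(\Gamma_h)} \le \|v_h\|_{L^2(\Gamma_h)} + \|v_h - \Pi_h^c v_h\|_{L^2(\Gamma_h)} \lesssim \|v_h\|_{L^2(\Gamma_h)}$, and the same Poincar\'e/energy chain; the resulting $h^2$ factor is absorbed into the required $h$ bound.

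For \eqref{eq:source-estimate2} I first observe that both sides are invariant under constant shifts of $v$: on one hand $l$ annihilates constants because $f \in \mathring L^2(\gamma)$, and on the other hand $I_h$ annihilates constants by its very definition \eqref{eq:I}. Thus we may assume $v \in \mathring H^3(\gamma)$, which is exactly the hypothesis under which \eqref{eq:I-L0} is available. The same decomposition gives
\[
l_h(I_h v) - l(v) = \int_{\Gamma_h} f^e (I_h v - v^e)\,\mathrm{d}\sigma_h + \int_{\Gamma_h} f^e v^e\,(1-\mu_h)\,\mathrm{d}\sigma_h,
\]
and I would bound the first term by $\|f\|_{L^2(\gamma)} \|I_h v - v^e\|_{L^2(\Gamma_h)} \lesssim h^2 \|f\|_{L^2(\gamma)} \|v\|_{H^3(\gamma)}$ using \eqref{eq:I-L0} with $m=0$, and the second by $h^2 \|f\|_{L^2(\gamma)} \|v\|_{L^2(\gamma)}$ using $|1-\mu_h| \lesssim h^2$ together with the norm equivalence \eqref{eq:scalar-norm-equiv}.

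There is no serious obstacle; the only point requiring minor care is the zero-mean reduction in \eqref{eq:source-estimate2}, which must exploit the compatibility condition on $f$ together with the mean-removal step in the definition of $I_h$ to enable the application of \eqref{eq:I-L0}.
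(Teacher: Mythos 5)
Your proof is correct and follows essentially the same approach as the paper: split $l_h - l$ into an approximation part (handled by the $H^1$-relative estimate \eqref{eq:sNZT-relative}, respectively the interpolation bound \eqref{eq:I-L0}) plus a Poincar\'e/energy chain to reach $\triplenorm{v_h}_h$, and a geometric part controlled by $|1-\mu_h|\lesssim h^2$. The only cosmetic difference is your observation that the $\overline{f^e}$ shift in $f_h$ can be dropped immediately because the discrete test functions ($v_h$ and $I_h v$) are mean-free on $\Gamma_h$, and that one may reduce \eqref{eq:source-estimate2} to mean-free $v$ via the constant-invariance of $I_h$; the paper instead carries the explicit mean subtractions ($\overline{f^e}$, $\overline{(\Pi_h^c v_h)^\ell}$, $\bar v$) through the decomposition so the two remaining integrands become an extension/lift pair before invoking $|1-\mu_h|\lesssim h^2$ --- a harmless streamlining on your side, not a different argument.
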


\begin{proof} Revoking the source term in \eqref{eq:source}, we denote $f_h=f^e-\overline{f^e}$ where $\overline{f^e} := \frac{1}{|\Gamma_h|}\int_{\Gamma_h}{f^e}\mathrm{d} \sigma_h$, and $\overline{(\Pi_h^c v_h)^\ell} := \frac{1}{|\gamma|} \int_{\gamma} (\Pi_h^c v_h)^\ell \mathrm{d}\sigma$. Notice that $f \in \mathring{L}^2(\gamma)$, then
$$
\begin{aligned}
l_h(v_h) - l((\Pi_h^c v_h)^\ell) &= (f_h, v_h)_{\Gamma_h} - (f, (\Pi_h^c v_h)^\ell)_\gamma \\
&= (f_h, v_h - \Pi_h^c v_h)_{\Gamma_h} + (f^e - \overline{f^e}, \Pi_h^c v_h)_{\Gamma_h} \\
& ~~~ - (f, (\Pi_h^c v_h)^\ell - \overline{(\Pi_h^c v_h)^\ell})_\gamma \\
&= (f_h, v_h - \Pi_h^c v_h)_{\Gamma_h}  + (f^e - \overline{f^e}, \Pi_h^c v_h - \overline{(\Pi_h^c v_h)^\ell})_{\Gamma_h} \\
& ~~~ - (f - \overline{f^e}, (\Pi_h^c v_h)^\ell - \overline{(\Pi_h^c v_h)^\ell})_\gamma \\ 
\end{aligned}
$$ 
Noticing that the integrand functions in the last two terms happen to be an extension/lift of each other.
Using the property of $H^1$-conforming relative \eqref{eq:sNZT-relative} and $|1 - \mu_h| \lesssim h^2$, we easily deduct \eqref{eq:source-estimate1}.

Next, we denote $\Bar{v}:= \frac{1}{|\gamma|}\int_\gamma v\mathrm{d} \sigma$. Using the similar trick, we have 
$$ 
\begin{aligned}
l_h(I_h v)-l(v) 
=(f_h, I_h v - v^e)_{\Gamma_h} + \big((f- \bar{f^e})^e, (v - \bar{v})^e\big)_{\Gamma_h} - (f- \bar{f^e}, v - \bar{v})_{\gamma}.
\end{aligned}
$$ 
Using the interpolation error estimate \eqref{eq:I-L0} and again the fact that $|1-\mu_h| \lesssim h^2$, 
we easily obtain \eqref{eq:source-estimate2}.
\end{proof}

\subsection{Energy norm error estimates}
\begin{theorem}[energy norm error estimate I] \label{tm:energy-estimate}
Let $u$ be the solution of surface biharmonic problem \eqref{eq:s-biharmonic} on the $C^4$ surface $\gamma$, $u_h \in \mathring{V}_{h}$ be the solution of \eqref{eq:FEM}. Then, 
\begin{equation} \label{eq:energy-estimate}
\triplenorm{I_h u - u_h}_h \lesssim h(\|u\|_{H^3(\gamma)} + \|f\|_{L^2(\gamma)})\lesssim h \| f \|_{L^2(\gamma)}. 
\end{equation}
\end{theorem}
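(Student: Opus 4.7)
The bilinear form satisfies $a_h(v,v) = \triplenorm{v}_h^2$ by the definition \eqref{eq:energy-norm}, so with $w_h := I_h u - u_h \in \mathring V_h$ the discrete equation \eqref{eq:FEM} yields the consistency identity
\begin{equation*}
\triplenorm{w_h}_h^2 = a_h(I_h u, w_h) - l_h(w_h),
\end{equation*}
and the goal is to bound the right-hand side by $C h(\|u\|_{H^3(\gamma)} + \|f\|_{L^2(\gamma)})\triplenorm{w_h}_h$. The stabilization contribution of $a_h(I_h u, w_h)$ is handled directly via Cauchy-Schwarz and Corollary~\ref{lm:normal-jump}. For the bulk term $\sum_K(\Delta_{\Gamma_h} I_h u, \Delta_{\Gamma_h} w_h)_K$, I would first swap $\Delta_{\Gamma_h} I_h u$ for $(\Delta_\gamma u)^e$ using Corollary~\ref{co:Delta-interpolation} (with $L^2$ error of order $h\|u\|_{H^3}$), then perform one elementwise integration by parts. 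This produces the volume pairing $-(\nabla_{\Gamma_h}(\Delta_\gamma u)^e, \nabla_{\Gamma_h} w_h)_{\Gamma_h}$ plus the single-jump boundary sum $\sum_e \int_e (\Delta_\gamma u)^e [\nabla_{\Gamma_h} w_h\cdot\bm{n}]ds_h$, the latter being immediately controlled by \eqref{eq:jump-estimate1}.

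On the source side, the estimate \eqref{eq:source-estimate1} replaces $l_h(w_h)$ by $l((\Pi_h^c w_h)^\ell)$ up to $O(h\|f\|_{L^2}\triplenorm{w_h}_h)$. Since $\Pi_h^c w_h\in C^0(\Gamma_h)$, its lift $(\Pi_h^c w_h)^\ell$ belongs to $H^1(\gamma)$, and applying the continuous Green's formula once, together with $\Delta_\gamma^2 u = f \in L^2(\gamma)$ and the tangentiality of $\nabla_\gamma\Delta_\gamma u$, yields
\begin{equation*}
l((\Pi_h^c w_h)^\ell) = -(\nabla_\gamma \Delta_\gamma u, \nabla_\gamma (\Pi_h^c w_h)^\ell)_\gamma.
\end{equation*}
Transferring this back to $\Gamma_h$ via \eqref{eq:trans-surface} and the bound $|(\textbf{R}_h - \textbf{I})\textbf{P}|\lesssim h^2$ (applied to the tangent vector $\nabla_\gamma \Delta_\gamma u$), I obtain
\begin{equation*}
l_h(w_h) = -(\nabla_{\Gamma_h}(\Delta_\gamma u)^e, \nabla_{\Gamma_h}\Pi_h^c w_h)_{\Gamma_h} + O\bigl((h\|f\|_{L^2} + h^2\|u\|_{H^3})\triplenorm{w_h}_h\bigr).
\end{equation*}
Subtracting this from the processed form of $a_h(I_h u, w_h)$ collapses the two volume pairings into the single residual $-(\nabla_{\Gamma_h}(\Delta_\gamma u)^e, \nabla_{\Gamma_h}(w_h - \Pi_h^c w_h))_{\Gamma_h}$, which is bounded via Cauchy-Schwarz using the extension bound $\|\nabla_{\Gamma_h}(\Delta_\gamma u)^e\|_{L^2(\Gamma_h)}\lesssim\|u\|_{H^3(\gamma)}$, the $H^1$-relative estimate \eqref{eq:sNZT-relative}, and Theorem~\ref{tm:H1-energy}. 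Dividing through and invoking the regularity \eqref{eq:regularity} to absorb $\|u\|_{H^3}\lesssim\|f\|_{L^2}$ delivers the claim.

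The main obstacle is that the most natural approach---integrating by parts twice so as to pair $\Delta_\gamma^2 u = f$ directly with $w_h$---would require $u\in H^4(\gamma)$ together with the $H^4$ version of the norm equivalence \eqref{eq:scalar-norm-equiv}, neither of which is accessible in our $C^4$ setting. The resolution is the device above: perform the second weak derivative on the \emph{smooth} surface $\gamma$ against the $H^1$-regular surrogate $(\Pi_h^c w_h)^\ell$, then pay only a factor of $h$ through \eqref{eq:sNZT-relative} to transfer from $\Pi_h^c w_h$ back to $w_h$. This is precisely the step in which the mild nonconformity of the surface NZT space is absorbed into the first-order energy error, and it is the ingredient that makes $H^3(\gamma)$ regularity sufficient.
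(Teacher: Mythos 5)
Your proposal is correct and follows essentially the same route as the paper's proof: you use Corollary~\ref{co:Delta-interpolation} to swap $\Delta_{\Gamma_h} I_h u$ for $(\Delta_\gamma u)^e$ (the paper's $I_1$), one elementwise integration by parts to expose the edge sum handled by \eqref{eq:jump-estimate1} (the paper's $I_2$), the source estimate \eqref{eq:source-estimate1} plus one continuous Green's formula and the transfer identity \eqref{eq:trans-surface} (the paper's $I_4$), the residual pairing against $w_h - \Pi_h^c w_h$ controlled by \eqref{eq:sNZT-relative} (the paper's $I_3$), and the stabilization jump estimate \eqref{eq:normal-jump} (the paper's $I_5$). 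The only superficial difference is that you test directly with $w_h = I_h u - u_h$ and use $a_h(w_h,w_h)=\triplenorm{w_h}_h^2$, whereas the paper writes the same bound as a supremum over test functions; these are equivalent.
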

\begin{proof}
Define $u_I = I_h u \in \mathring{V}_h$. Utilizing the coercivity of $a_h(\cdot, \cdot)$ under the norm $\triplenorm{\cdot}_h$, it follows that
$$
\begin{aligned}
\triplenorm{u_I - u_h}_h &\leq \sup_{v_h \in \mathring{V}_h} \frac{a_h(u_I - u_h, v_h)}{\triplenorm{v_h}_h}\\
&\leq \sup_{v_h \in \mathring{V}_h} \frac{|l_h(v_h)-l((\Pi_h^c v_h)^\ell)|}{\triplenorm{v_h}_h}
+  \sup_{v_h \in \mathring{V}_h}\frac{|a_h(u_I, v_h)-(f, (\Pi_h^c v_h)^\ell)_\gamma|}{\triplenorm{v_h}_h} \\
&\lesssim h \|f\|_{L^2(\gamma)} +  \sup_{v_h \in \mathring{V}_h}\frac{|a_h(u_I, v_h)-(f, (\Pi_h^c v_h)^\ell)_\gamma|}{\triplenorm{v_h}_h},
\end{aligned}    
$$
where the source estimate \eqref{eq:source-estimate1} is applied in the last step. With $\Pi_h^c v_h \in H^1(\Gamma_h)$, its lift $(\Pi_h^c v_h)^\ell$ belongs to $H^1(\gamma)$. This allows for decomposing $a_h(u_I, v_h) - (f, (\Pi_h^c v_h)^\ell)_\gamma$ as
$$
\begin{aligned}
&\quad~ a_h(u_I, v_h) - (f, (\Pi_h^c v_h)^\ell)_\gamma \\
&= (\Delta_{\Gamma_h} u_I, \Delta_{\Gamma_h} v_h)_{\Gamma_h} - (\Delta_\gamma^2 u, (\Pi_h^c v_h)^\ell)_\gamma \\
&\quad + \sum_{e \in \mathcal{E}_h} h_e^{-1} \langle [\nabla_{\Gamma_h} u_I \cdot \bm{n} ], [\nabla_{\Gamma_h} v_h \cdot \bm{n} ] \rangle_e \\
&= (\Delta_{\Gamma_h} u_I - (\Delta_\gamma u)^e, \Delta_{\Gamma_h} v_h)_{\Gamma_h} \\
&\quad + ((\Delta_\gamma u)^e, \Delta_{\Gamma_h}v_h)_{\Gamma_h} + (\nabla_{\Gamma_h} (\Delta_\gamma u)^e, \nabla_{\Gamma_h} v_h)_{\Gamma_h} \\
&\quad - (\nabla_{\Gamma_h}(\Delta_\gamma u)^e, \nabla_{\Gamma_h}(v_h - \Pi_h^c v_h))_{\Gamma_h} \\
&\quad - (\nabla_{\Gamma_h}(\Delta_\gamma u)^e, \nabla_{\Gamma_h}\Pi_h^c v_h)_{\Gamma_h} + (\nabla_\gamma (\Delta_\gamma u), \nabla_\gamma (\Pi_h^c v_h)^\ell)_\gamma \\
&\quad + \sum_{e \in \mathcal{E}_h} h_e^{-1} \langle [\nabla_{\Gamma_h} u_I \cdot \bm{n} ], [\nabla_{\Gamma_h} v_h \cdot \bm{n} ] \rangle_e := \sum_{i=1}^5 I_i.
\end{aligned}
$$

Using \eqref{eq:Delta-interpolation}, we have
\begin{equation} \label{eq:energy-I1}
|I_1|  \leq \|\Delta_{\Gamma_h} u_I - (\Delta u)^e\|_{L^2(\Gamma_h)} \|\Delta_{\Gamma_h} v_h \|_{L^2(\Gamma_h)} \\
\lesssim h \|u\|_{H^3(\gamma)}\triplenorm{v_h}_h.
\end{equation}
In light of the jump estimate \eqref{eq:jump-estimate1}, there holds  
\begin{equation} \label{eq:energy-I2}
|I_2| \leq \sum_{e\in \mathcal{E}_h} \big|  \langle (\Delta_\gamma u)^e, [\nabla_{\Gamma_h} v_h \cdot \bm{n}]\rangle_e \big| 
\lesssim h \|u\|_{H^3(\gamma)} \triplenorm{v_h}_h.
\end{equation}
By utilizing the properties of the $H^1$-conforming relation in \eqref{eq:sNZT-relative} and the energy norm to control the $H^1$ norm in \eqref{eq:H1-energy}, it can be deduced that
\begin{equation} \label{eq:energy-I3}
|I_3| \lesssim \|u\|_{H^3(\gamma)} |v_h - \Pi_h^c v_h|_{H_h^1(\Gamma_h)} \lesssim h \|u\|_{H^3(\gamma)}  \triplenorm{v_h}_h.
\end{equation}
From \eqref{eq:sNZT-relative} and \eqref{eq:H1-energy}, we deduce that $\|\Pi_h^c v_h\|_{H^1(\Gamma_h)} \lesssim \triplenorm{v_h}_h$. Further, noting that $\Delta_\gamma u, (\Pi_h^c v_h)^\ell \in H^1(\gamma)$, by \eqref{eq:trans-surface}, we obtain
\begin{equation} \label{eq:energy-I4}
|I_4| = \big|((\textbf{R}_h - \textbf{I})\textbf{P} \nabla_\gamma(\Delta_\gamma u), \nabla_\gamma(\Pi_h^c v_h)^\ell)_\gamma\big| \lesssim h^2 \|u\|_{H^3(\gamma)} \triplenorm{v_h}_h.
\end{equation}
Finally, from the jump of the normal derivative of the interpolant \eqref{eq:normal-jump}, we obtain $|I_5| \lesssim h \|u\|_{H^3(\gamma)} \triplenorm{v_h}_h$. Combining this estimate with \eqref{eq:energy-I1}--\eqref{eq:energy-I4}, we have 
\[
|a_h(u_I, v_h) - (f, (\Pi_h^c v_h)^\ell)_\gamma| \lesssim h \|u\|_{H^3(\gamma)} \triplenorm{v_h}_h.
\]
By substituting the decomposition into the estimate for $\triplenorm{u_I - u_h}_h$ and applying the regularity result \eqref{eq:regularity}, the proof of \eqref{eq:energy-estimate} is completed.
\end{proof}

Using the interpolation estimates \eqref{eq:I-L0} and \eqref{eq:normal-jump}, we ultimately obtain the following theorem.
\begin{theorem}[energy norm error estimate II] \label{tm:energy-estimate2}
Let $u$ be the solution of surface biharmonic problem \eqref{eq:s-biharmonic} on the $C^4$ surface $\gamma$, $u_h \in \mathring{V}_{h}$ be the solution of \eqref{eq:FEM}. Then, 
\begin{subequations}
\begin{align}
    \| (\Delta_\gamma u)^e - \Delta_{\Gamma_h} u_h \|_{L^2(\Gamma_h)} &\lesssim h(\|u\|_{H^3(\gamma)} + \|f\|_{L^2(\gamma)})\lesssim h \| f \|_{L^2(\gamma)}, \\
    \big(\sum_{e\in \mathcal{E}_h}h_e^{-1} \|[\nabla_{\Gamma_h} u_h \cdot \bm{n}]\|_{L^2(e)}^2\big)^{1/2} & \lesssim h(\|u\|_{H^3(\gamma)} + \|f\|_{L^2(\gamma)})\lesssim h \| f \|_{L^2(\gamma)}. \label{eq:energy-estimate-jump}
\end{align}
\end{subequations}
\end{theorem}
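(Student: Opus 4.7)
The plan is to reduce both estimates to Theorem \ref{tm:energy-estimate} by a triangle-inequality split through the interpolant $u_I := I_h u \in \mathring{V}_h$, using the two interpolation results already in place (Corollary \ref{co:Delta-interpolation} for the Laplacian, and Corollary \ref{lm:normal-jump} for the normal-derivative jump). Nothing new has to be estimated; the theorem is essentially a post-processing of the energy estimate.

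First I would write, for the Laplacian part,
\[
\|(\Delta_\gamma u)^e - \Delta_{\Gamma_h} u_h\|_{L^2(\Gamma_h)}
\le \|(\Delta_\gamma u)^e - \Delta_{\Gamma_h} u_I\|_{L^2(\Gamma_h)}
+ \|\Delta_{\Gamma_h}(u_I - u_h)\|_{L^2(\Gamma_h)}.
\]
The first term is controlled elementwise by \eqref{eq:Delta-interpolation} and summed over $K \in \mathcal{T}_h$ to yield $\lesssim h \|u\|_{H^3(\gamma)}$. The second is bounded directly by $\triplenorm{u_I - u_h}_h$ because $\|\Delta_{\Gamma_h}\cdot\|_{L^2(\Gamma_h)}$ is one of the two pieces making up the energy seminorm \eqref{eq:energy-norm}, and Theorem \ref{tm:energy-estimate} gives $\triplenorm{u_I - u_h}_h \lesssim h \|f\|_{L^2(\gamma)}$.

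Second, for the jump sum I would write analogously
\[
\Bigl(\sum_{e\in \mathcal{E}_h} h_e^{-1}\|[\nabla_{\Gamma_h} u_h \cdot \bm{n}]\|_{L^2(e)}^2\Bigr)^{1/2}
\le \Bigl(\sum_{e\in \mathcal{E}_h} h_e^{-1}\|[\nabla_{\Gamma_h} u_I \cdot \bm{n}]\|_{L^2(e)}^2\Bigr)^{1/2}
+ \Bigl(\sum_{e\in \mathcal{E}_h} h_e^{-1}\|[\nabla_{\Gamma_h}(u_I - u_h) \cdot \bm{n}]\|_{L^2(e)}^2\Bigr)^{1/2}.
\]
The first term is estimated edge-by-edge by \eqref{eq:normal-jump}, which directly provides $h_e^{-1/2}\|[\nabla_{\Gamma_h} I_h u \cdot \bm{n}]\|_{L^2(e)} \lesssim h_e \|u\|_{H^3_h(\omega_{e^\gamma})}$; a finite-overlap argument on the edge patches $\omega_{e^\gamma}$ then gives $\lesssim h \|u\|_{H^3(\gamma)}$. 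The second term is again a piece of $\triplenorm{u_I - u_h}_h$ and is therefore bounded by $h \|f\|_{L^2(\gamma)}$ via Theorem \ref{tm:energy-estimate}.

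Finally, the regularity result \eqref{eq:regularity}, namely $\|u\|_{H^4(\gamma)} \lesssim \|f\|_{L^2(\gamma)}$, absorbs $\|u\|_{H^3(\gamma)}$ into $\|f\|_{L^2(\gamma)}$, producing the second inequality in each line of the statement. Since every ingredient is already proven, I do not foresee a real obstacle; the only point requiring a moment of care is recognizing that both quantities in the theorem are individually controlled by the full energy seminorm $\triplenorm{\cdot}_h$, so that Theorem \ref{tm:energy-estimate} can be invoked componentwise without loss.
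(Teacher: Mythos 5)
Your proposal is correct and coincides with the paper's (implicit, one-line) proof: split through $u_I = I_h u$ by the triangle inequality, absorb the interpolation pieces via \eqref{eq:Delta-interpolation} and \eqref{eq:normal-jump}, bound the remaining pieces by $\triplenorm{u_I - u_h}_h$ using Theorem~\ref{tm:energy-estimate}, and finish with the regularity bound \eqref{eq:regularity}. Your citation of Corollary~\ref{co:Delta-interpolation} for the Laplacian term is actually the sharper reference (the paper's prefatory sentence points to \eqref{eq:I-L0}, which bounds $\Delta_{\Gamma_h}(u^e - I_h u)$ but would still leave the geometric gap $(\Delta_\gamma u)^e - \Delta_{\Gamma_h} u^e$ to be handled, which is exactly what \eqref{eq:Delta-interpolation} packages up).
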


\subsection{Broken $H^1$ norm error estimate}
In this section, the duality argument only relies on the regularity result from $\mathring{H}^{-1}(\gamma)$ to $H^3(\gamma)$. For $C^4$ surfaces, this regularity is evident from \eqref{eq:regularity} and standard interpolation theory.
To proceed with the error estimate, we first introduce a lemma that enhances the approximation properties of the interpolated function in the weak form. 

\begin{lemma} \label{lm:bilinear-approximation} For any $w,v\in H^3(\gamma)$, it holds that 
\begin{equation} \label{eq:H1-Delta}
\big| (\Delta_{\Gamma_h} I_h w,\Delta_{\Gamma_h} I_h v)_{\Gamma_h} - (\Delta_\gamma w,\Delta_\gamma v)_\gamma \big|
\lesssim h^2 \|w\|_{H^3(\gamma)} \|v\|_{H^3(\gamma)}. 
\end{equation}
\end{lemma}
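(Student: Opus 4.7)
The plan is to separate the error into a geometric part, stemming from $\mathrm{d}\sigma = \mu_h \mathrm{d}\sigma_h$, and an interpolation part; both must be shown to be $\mathcal{O}(h^2)$. Since $(\Delta_\gamma w,\Delta_\gamma v)_\gamma = \int_{\Gamma_h}\mu_h(\Delta_\gamma w)^e(\Delta_\gamma v)^e\,\mathrm{d}\sigma_h$, the geometric discrepancy $\big((\mu_h-1)(\Delta_\gamma w)^e,(\Delta_\gamma v)^e\big)_{\Gamma_h}$ is directly bounded by $h^2\|w\|_{H^2(\gamma)}\|v\|_{H^2(\gamma)}$ via $|1-\mu_h|\lesssim h^2$ together with the norm equivalence \eqref{eq:scalar-norm-equiv}.

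Writing $E_h^w := \nabla_{\Gamma_h}I_hw - \widebreve{\nabla_\gamma w}$ (and likewise $E_h^v$), the Piola divergence identity \eqref{eq:Piola-Delta} gives the key splitting
\[
\Delta_{\Gamma_h}I_hw - (\Delta_\gamma w)^e = \mathrm{div}_{\Gamma_h}E_h^w + (\mu_h-1)(\Delta_\gamma w)^e.
\]
Expanding the inner product using this on both arguments produces a sum of pairings. Using $\|E_h^\cdot\|_{L^2}\lesssim h^2$ and $\|\mathrm{div}_{\Gamma_h}E_h^\cdot\|_{L^2}\lesssim h$ from \eqref{eq:I-D1}, $|1-\mu_h|\lesssim h^2$, and $\|\Delta_\gamma\cdot\|_{L^2(\gamma)}\lesssim \|\cdot\|_{H^2(\gamma)}$, Cauchy--Schwarz yields $\mathcal{O}(h^2)\|w\|_{H^3(\gamma)}\|v\|_{H^3(\gamma)}$ for every pairing except the two ``cross'' terms
\[
\big((\Delta_\gamma w)^e,\mathrm{div}_{\Gamma_h}E_h^v\big)_{\Gamma_h} \quad \text{and} \quad \big(\mathrm{div}_{\Gamma_h}E_h^w,(\Delta_\gamma v)^e\big)_{\Gamma_h},
\]
each of which is only $\mathcal{O}(h)$ by naive estimates.

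The main obstacle is to recover the extra factor of $h$ in these cross terms, and the route is piecewise integration by parts. Because $\widebreve{\nabla_\gamma v}$ and $\nabla_{\Gamma_h}I_hv$ are both tangent to each flat triangle, so is $E_h^v$, and Green's formula on each $K\in\mathcal{T}_h$ yields
\[
\sum_{K}\big((\Delta_\gamma w)^e,\mathrm{div}_{\Gamma_h}E_h^v\big)_K = -\big(\nabla_{\Gamma_h}(\Delta_\gamma w)^e,E_h^v\big)_{\Gamma_h} + \sum_{e\in\mathcal{E}_h}\big\langle (\Delta_\gamma w)^e,[E_h^v\cdot\bm{n}]\big\rangle_e,
\]
where single-valuedness of $(\Delta_\gamma w)^e$ across edges eliminates any cross-jump contribution. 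The volume term on the right is $\mathcal{O}(h^2)\|w\|_{H^3(\gamma)}\|v\|_{H^3(\gamma)}$, using $\|E_h^v\|_{L^2(\Gamma_h)}\lesssim h^2\|v\|_{H^3(\gamma)}$ from \eqref{eq:I-D1} and $\|\nabla_{\Gamma_h}(\Delta_\gamma w)^e\|_{L^2(\Gamma_h)}\lesssim \|w\|_{H^3(\gamma)}$. For the edge term, since $\widebreve{\nabla_\gamma v}\in\bm{H}(\mathrm{div}_{\Gamma_h};\Gamma_h)$ forces $[\widebreve{\nabla_\gamma v}\cdot\bm{n}]|_e = 0$, we have $[E_h^v\cdot\bm{n}]|_e = [\nabla_{\Gamma_h}I_hv\cdot\bm{n}]|_e$, whereupon the jump estimate \eqref{eq:jump-estimate2} directly delivers $\mathcal{O}(h^2)\|w\|_{H^3(\gamma)}\|v\|_{H^3(\gamma)}$. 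The symmetric cross term is handled identically, and assembling all bounds completes the proof.
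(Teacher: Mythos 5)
Your proposal is correct and follows essentially the same approach as the paper: both rely on the Piola splitting $\Delta_{\Gamma_h}I_h w - (\Delta_\gamma w)^e = \mathrm{div}_{\Gamma_h}(\nabla_{\Gamma_h}I_hw - \widebreve{\nabla_\gamma w}) + (\mu_h-1)(\Delta_\gamma w)^e$, elementwise integration by parts to move the divergence onto the smooth factor, the vanishing normal jump of $\widebreve{\nabla_\gamma v}$ so that $[E_h^v\cdot\bm n]=[\nabla_{\Gamma_h}I_hv\cdot\bm n]$, the jump estimate \eqref{eq:jump-estimate2}, the interpolation bound \eqref{eq:I-D1}, and $|1-\mu_h|\lesssim h^2$. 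The only difference is cosmetic bookkeeping: you expand all pairings explicitly, while the paper organizes the same terms via a ``difference of products'' decomposition (using \eqref{eq:Delta-interpolation} to absorb the $(\mathrm{div}_{\Gamma_h}E_h^w,\mathrm{div}_{\Gamma_h}E_h^v)$-type contributions into a single $O(h)\cdot O(h)$ term).
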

\begin{proof} We denote by $w_I = I_h w$, $v_I = I_hv$. Note that $\widebreve{\nabla_\gamma w} \in \bm{H}({\rm div}_{\Gamma_h}; \Gamma_h)$, we write 
$$
\begin{aligned}
&~|(\Delta_{\Gamma_h} w_I - (\Delta_\gamma w)^e ,(\Delta_\gamma v)^e)_{\Gamma_h}|\\
\leq &~\big|(\Delta_{\Gamma_h} w_I -{\rm div}_{\Gamma_h}\widebreve{\nabla_\gamma w},(\Delta_\gamma v)^e)_{\Gamma_h} \big| 
+\big| ({\rm div}_{\Gamma_h}\widebreve{\nabla_\gamma w}-(\Delta_\gamma w)^e,(\Delta_\gamma v)^e)_{\Gamma_h}| \\
\leq &~\big|(\nabla_{\Gamma_h} w_I - \widebreve{\nabla_\gamma w},\nabla_{\Gamma_h}(\Delta_\gamma v)^e)_{\Gamma_h}\big|
+\Big|\sum_{e\in \mathcal{E}_h} \langle (\Delta_\gamma v)^e, [\nabla_{\Gamma_h}w_I \cdot \bm{n}] \rangle_e\Big|\\
& +|({\rm div}_{\Gamma_h}\widebreve{\nabla_\gamma w}-(\Delta_\gamma w)^e,(\Delta_\gamma v)^e)_{\Gamma_h}|.
\end{aligned}
$$
By the approximation result \eqref{eq:I-D1}, jump estimate \eqref{eq:jump-estimate2}, the property of Piola transform \eqref{eq:Piola-Delta} and $|1 - \mu_h| \lesssim h^2$, we deduce that 
\begin{equation} \label{eq:H1-Delta-err1}
|(\Delta_{\Gamma_h} w_I - (\Delta_\gamma w)^e ,(\Delta_\gamma v)^e)_{\Gamma_h}| \lesssim h^2 \|w\|_{H^3(\gamma)} \|v\|_{H^3(\gamma)}.
\end{equation}

In a similar way, we have $|((\Delta_\gamma w)^e ,(\Delta_\gamma v)^e - \Delta_{\Gamma_h} v_I)_{\Gamma_h}| \lesssim h^2 \|w\|_{H^3(\gamma)} \|v\|_{H^3(\gamma)}$. Combine the above estimates with \eqref{eq:Delta-interpolation} and $|1-\mu_h| \lesssim h^2$, we have 
$$
\begin{aligned}
&~|(\Delta_{\Gamma_h} w_I, \Delta_{\Gamma_h}v_I)_{\Gamma_h} - (\Delta_\gamma w,\Delta_\gamma v)_\gamma| \\
\leq &~ |  (\Delta_{\Gamma_h} w_I - (\Delta_\gamma w)^e, \Delta_{\Gamma_h}v_I - (\Delta_\gamma w)^e)_{\Gamma_h}| \\
& + |(\Delta_{\Gamma_h} w_I - (\Delta_\gamma w)^e ,(\Delta_\gamma v)^e)_{\Gamma_h}| 
   + |((\Delta_\gamma w)^e ,(\Delta_\gamma v)^e - \Delta_{\Gamma_h} v_I)_{\Gamma_h}| \\
& + |((\Delta_\gamma w)^e,(\Delta_\gamma v)^e)_{\Gamma_h}- (\Delta_\gamma w,\Delta_\gamma v)_\gamma| \\
\lesssim &~  h^2 \|w\|_{H^3(\gamma)} \|v\|_{H^3(\gamma)}.
\end{aligned}
$$
This completes the proof. 
\end{proof}

\begin{theorem}[broken $H^1$ norm error estimate]\label{tm:H1-estimate} Let $u$ be the solution of surface biharmonic problem \eqref{eq:s-biharmonic} on the $C^4$ surface $\gamma$, $u_h \in \mathring{V}_{h}$ be the solution of \eqref{eq:FEM}. It holds that 
\begin{equation}\label{eq:H1-estimate}
    \| u^e - u_h\|_{H_h^1(\Gamma_h)} \lesssim h^2 ( \|u\|_{H^3(\gamma)} + 
    \| f \|_{L^2(\gamma)}) \lesssim h^2 \| f \|_{L^2(\gamma)}.
\end{equation}
\end{theorem}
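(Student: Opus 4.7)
The plan is to combine a reduction to an $H^1$-conforming relative with an Aubin--Nitsche duality argument, exploiting the refined $\mathcal{O}(h^2)$ bounds already established in the paper for error terms in which a finite element function is replaced by the $V_h$-interpolant of an $H^3(\gamma)$ function (Lemma \ref{lm:bilinear-approximation}, the refined jump estimate \eqref{eq:jump-estimate2}, Corollary \ref{lm:normal-jump}, and the source estimate \eqref{eq:source-estimate2}).

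\emph{Reduction and dual problem.} First apply the triangle inequality $\|u^e - u_h\|_{H^1_h(\Gamma_h)} \le \|u^e - I_h u\|_{H^1_h(\Gamma_h)} + \|e_h\|_{H^1_h(\Gamma_h)}$ with $e_h := I_h u - u_h \in \mathring V_h$; the first summand is $\mathcal{O}(h^2 \|u\|_{H^3(\gamma)})$ by \eqref{eq:I-L0}. The discrete Poincar\'e inequality \eqref{eq:poincare} reduces the second summand to $|e_h|_{H^1_h(\Gamma_h)}$, and the $H^1$-conforming relative estimate \eqref{eq:sNZT-relative} combined with Theorem \ref{tm:energy-estimate} yields $|e_h - e_h^c|_{H^1_h(\Gamma_h)} \lesssim h \triplenorm{e_h}_h = \mathcal{O}(h^2 \|f\|_{L^2(\gamma)})$, where $e_h^c := \Pi_h^c e_h$. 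Hence it suffices to bound $|e_h^c|_{H^1(\Gamma_h)} \simeq |(e_h^c)^\ell|_{H^1(\gamma)}$. Let $\mathring E \in \mathring H^1(\gamma)$ denote the mean-zero version of $(e_h^c)^\ell$, and set up the dual problem: find $\phi \in \mathring H^2(\gamma)$ with
\[
(\Delta_\gamma \phi, \Delta_\gamma v)_\gamma = (\nabla_\gamma \mathring E, \nabla_\gamma v)_\gamma \quad \forall v \in \mathring H^2(\gamma).
\]
The right-hand side lies in $\mathring H^{-1}(\gamma)$, so the assumed $\mathring H^{-1}\to H^3$ regularity gives $\|\phi\|_{H^3(\gamma)} \lesssim |\mathring E|_{H^1(\gamma)}$; on the closed surface this reduces to the Poisson identity $\Delta_\gamma \phi = -\mathring E$.

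\emph{Key identity.} The critical step is the representation
\[
a_h(I_h \phi, e_h) = |\mathring E|^2_{H^1(\gamma)} + \mathcal{O}\bigl(h^2 |\mathring E|_{H^1(\gamma)} \|f\|_{L^2(\gamma)}\bigr).
\]
In the $\Delta_{\Gamma_h}$-bilinear part of $a_h$, replace $\Delta_{\Gamma_h} I_h \phi$ by $(\Delta_\gamma\phi)^e = -\mathring E^e$ using Corollary \ref{co:Delta-interpolation}; the $\mathcal{O}(h\|\phi\|_{H^3(\gamma)})$ residual is paired against $\|\Delta_{\Gamma_h} e_h\|_{L^2(\Gamma_h)} \le \triplenorm{e_h}_h = \mathcal{O}(h\|f\|_{L^2(\gamma)})$ and is thus $\mathcal{O}(h^2)$. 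Element-wise integration by parts on $\Gamma_h$ transforms $-(\mathring E^e, \Delta_{\Gamma_h} e_h)_{\Gamma_h}$ into $(\nabla_{\Gamma_h} \mathring E^e, \nabla_{\Gamma_h} e_h)_{\Gamma_h} - \sum_e \int_e \mathring E^e \,[\nabla_{\Gamma_h} e_h \cdot \bm n]\,\mathrm{d}s_h$, since $\mathring E^e$ is single-valued on each edge. The jump term is bounded via the $P_e^0$ projection trick exploiting the zero-mean property \eqref{eq:NZT_D1jumpaverage}, yielding $\mathcal{O}(h^2 |\mathring E|_{H^1(\gamma)}\|f\|_{L^2(\gamma)})$. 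The surviving volume integral is transferred to $\gamma$ via \eqref{eq:trans-surface}, losing only $\mathcal{O}(h^2)$ from $|\mathbf R_h - \mathbf I| \lesssim h^2$; after swapping $e_h$ for $e_h^c$ (whose difference is absorbed using \eqref{eq:sNZT-relative} and Theorem \ref{tm:energy-estimate}), the leading term becomes $(\nabla_\gamma \mathring E, \nabla_\gamma (e_h^c)^\ell)_\gamma = |\mathring E|^2_{H^1(\gamma)}$. The stabilization contribution to $a_h$ is controlled directly by Corollary \ref{lm:normal-jump} and is also $\mathcal{O}(h^2)$.

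\emph{Consistency closure and obstacle.} Using symmetry of $a_h$ and the discrete equation \eqref{eq:FEM}, $a_h(I_h\phi, e_h) = a_h(I_h u, I_h \phi) - l_h(I_h \phi)$. Lemma \ref{lm:bilinear-approximation} and Corollary \ref{lm:normal-jump} give $a_h(I_h u, I_h \phi) = (\Delta_\gamma u, \Delta_\gamma \phi)_\gamma + \mathcal{O}(h^2 \|u\|_{H^3(\gamma)} \|\phi\|_{H^3(\gamma)})$, while \eqref{eq:source-estimate2} gives $l_h(I_h \phi) = (f, \phi)_\gamma + \mathcal{O}(h^2 \|f\|_{L^2(\gamma)} \|\phi\|_{H^3(\gamma)})$. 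Since $(\Delta_\gamma u, \Delta_\gamma \phi)_\gamma = (f, \phi)_\gamma$ from the continuous weak form \eqref{eq:variational-form}, we obtain $a_h(I_h\phi, e_h) = \mathcal{O}(h^2 \|f\|_{L^2(\gamma)} \|\phi\|_{H^3(\gamma)})$. Substituting into the key identity, absorbing $\|\phi\|_{H^3(\gamma)} \lesssim |\mathring E|_{H^1(\gamma)}$, and dividing by $|\mathring E|_{H^1(\gamma)}$ produces $|\mathring E|_{H^1(\gamma)} \lesssim h^2 \|f\|_{L^2(\gamma)}$, closing the proof via the reduction of the first paragraph. The main technical difficulty is the key identity: each contribution---Laplacian interpolation, normal jump, geometric transfer, conforming-relative correction, and stabilization---must simultaneously attain the $\mathcal{O}(h^2)$ threshold, which hinges on the zero-mean normal jump \eqref{eq:NZT_D1jumpaverage} of the surface NZT space, the $\mathcal{O}(h^2)$ Piola approximation in Lemma \ref{lm:Piola-derivative}, and the fact that the dual solution has exactly $H^3(\gamma)$ regularity---precisely the threshold that Lemma \ref{lm:bilinear-approximation} and the refined jump estimate \eqref{eq:jump-estimate2} are designed to exploit.
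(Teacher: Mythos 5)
Your proposal is correct and follows essentially the same route as the paper: the same dual problem with $\mathring H^{-1}\to H^3$ regularity, the same use of the $H^1$-conforming relative $\Pi_h^c$ to transfer between $V_h$ and $H^1(\gamma)$, and the same decomposition into geometric-transfer, conforming-relative, Laplacian-interpolation, normal-jump, stabilization, and source terms, each hitting the $\mathcal{O}(h^2)$ threshold via Lemma~\ref{lm:bilinear-approximation}, \eqref{eq:jump-estimate2}, Corollary~\ref{lm:normal-jump}, and \eqref{eq:source-estimate2}. The only difference is cosmetic: you organize the computation as two expansions of $a_h(I_h\phi,\epsilon_h)$ (one yielding $|\mathring E|^2_{H^1(\gamma)}+\mathcal{O}(h^2)$, the other $\mathcal{O}(h^2)$ via the discrete equation), whereas the paper decomposes $|(\Pi_h^c\epsilon_h)^\ell|^2_{H^1(\gamma)}$ directly into $I_1+\cdots+I_4$ and then expands $I_3$ using \eqref{eq:FEM}; the resulting collection of terms is identical.
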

\begin{proof}
We denote $u_I =I_h u$ and $\epsilon_h :=u_I-u_h\in V_h$. Note that the $H^1$-conforming relative, $(\Pi_h^c \epsilon_h)^\ell \in H^1(\gamma)$, then $g:=-\Delta_\gamma(\Pi_h^c \epsilon_h)^\ell \in H^{-1}(\gamma)$ and $\int_\gamma g \mathrm{d}\sigma = 0$. Consider the following auxiliary problem: 
$$
\Delta_\gamma^2 \phi = g \quad \text{on } \gamma, \quad \int_{\gamma} \phi \mathrm{d}\sigma = 0,
$$
which satisfies $\|\phi\|_{H^3(\gamma)} \lesssim \|g\|_{\mathring{H}^{-1}(\gamma)}$ due to the regularity result. 
For any $v \in \mathring{H}^1(\gamma)$, by Green's formula
$$ 
(g,v)_\gamma = -(\Delta_\gamma(\Pi_h^c \epsilon_h)^\ell, v)_\gamma
 = (\nabla_\gamma(\Pi_h^c \epsilon_h)^\ell,\nabla_\gamma v)_\gamma 
\leq |(\Pi_h^c \epsilon_h)^\ell|_{H^1(\gamma)} \|v\|_{H^1(\gamma)},
$$
which gives 
\begin{equation} \label{eq:regularity-bound}
\|\phi\|_{H^3(\gamma)} \lesssim \|g\|_{\mathring{H}^{-1}(\gamma)} \leq |(\Pi_h^c \epsilon_h)^\ell|_{H^1(\gamma)}.
\end{equation}

Next, we have 
\begin{equation}\label{eq:H1_1}
\begin{aligned}
|(\Pi_h^c \epsilon_h)^\ell |_{H^1(\gamma)}^2 & = (g,(\Pi_h^c \epsilon_h)^\ell)_\gamma 
=(\Delta_\gamma^2 \phi,(\Pi_h^c \epsilon_h)^\ell)_\gamma
= (\nabla_\gamma\Delta_\gamma \phi,\nabla_\gamma(\Pi_h^c \epsilon_h)^\ell)_\gamma\\
& = (\nabla_\gamma\Delta_\gamma \phi,\nabla_\gamma(\Pi_h^c \epsilon_h)^\ell)_\gamma 
- (\nabla_{\Gamma_h}(\Delta_\gamma \phi)^e,\nabla_{\Gamma_h}\Pi_h^c \epsilon_h)_{\Gamma_h} \\
&\quad + (\nabla_{\Gamma_h}(\Delta_\gamma \phi)^e,\nabla_{\Gamma_h}(\Pi_h^c \epsilon_h-\epsilon_h))_{\Gamma_h}\\
&\quad -((\Delta_\gamma \phi)^e,\Delta_{\Gamma_h}\epsilon_h)_{\Gamma_h}+\sum_{e\in \mathcal{E}_h}\langle (\Delta_\gamma \phi)^e, [\nabla_{\Gamma_h}\epsilon_h \cdot \bm{n}] \rangle_e\\
&:= I_1+I_2+I_3+I_4.
\end{aligned}
\end{equation}

\textit{Estimates of $I_1$, $I_2$ and $I_4$.} Note that $\Delta_\gamma \phi, (\Pi_h^c \epsilon_h)^\ell \in H^1(\gamma)$, by \eqref{eq:trans-surface}, we have 
\begin{equation} \label{eq:H1-I1}
\begin{aligned}
|I_1| &= \big|((\textbf{R}_h - \textbf{I})\textbf{P} \nabla_\gamma(\Delta_\gamma \phi), \nabla_\gamma(\Pi_h^c \epsilon_h)^\ell)_\gamma\big| \lesssim h^2 \|\phi\|_{H^3(\gamma)} |\Pi_h^c \epsilon_h|_{H^1(\Gamma_h)} \\
&\lesssim h^2 \|\phi\|_{H^3(\gamma)} |\epsilon_h|_{H_h^1(\Gamma_h)} \lesssim h^2 \|\phi\|_{H^3(\gamma)} (\|u\|_{H^3(\gamma)} + \|f\|_{L^2(\gamma)}).
\end{aligned}
\end{equation}
Here, we have used Lemma \ref{lm:H1-relative} ($H^1$-conforming relative) and Theorem \ref{tm:energy-estimate} (energy norm error estimate). Next, we have 
\begin{equation} \label{eq:H1-I2}
|I_2| \lesssim h \| \phi \|_{H^3(\gamma)} |\epsilon_h|_{H_h^1(\Gamma_h)} \lesssim h^2 \| \phi \|_{H^3(\gamma)} (\|u\|_{H^3(\gamma)} + \|f\|_{L^2(\gamma)}).
\end{equation}
By the jump estimate \eqref{eq:jump-estimate1} and the energy estimate, we have 
\begin{equation} \label{eq:H1-I4}
|I_4|  \lesssim h\| \phi \|_{H^3(\gamma)} \triplenorm{\epsilon_h}_h \lesssim h^2 \| \phi \|_{H^3(\gamma)} (\|u\|_{H^3(\gamma)} + \|f\|_{L^2(\gamma)}).
\end{equation}

\textit{Estimate of $I_3$.} We denote $\phi_I := I_h \phi \in \mathring{V}_{h}$. From the FE scheme \eqref{eq:FEM}, it holds that 
$$ 
(\Delta_{\Gamma_h} u_h, \Delta_{\Gamma_h} \phi_I)_{\Gamma_h} + \sum_{e \in \mathcal{E}_h}h_e^{-1} \langle[\nabla_{\Gamma_h} u_h \cdot \bm{n}], [\nabla_{\Gamma_h} \phi_I \cdot \bm{n}]  \rangle_e = l_h(\phi_I),
$$ 
which leads to a decomposition of $I_3$ as 
$$
\begin{aligned}
I_3 = &-\big( (\Delta_\gamma \phi)^e-\Delta_{\Gamma_h}\phi_I,\Delta_{\Gamma_h}\epsilon_h \big)_{\Gamma_h} 
+ (\Delta_{\Gamma_h}\phi_I,\Delta_{\Gamma_h}u_h)_{\Gamma_h}-(\Delta_{\Gamma_h}\phi_I,\Delta_{\Gamma_h}u_I)_{\Gamma_h}\\
=& - \big( (\Delta_\gamma \phi)^e-\Delta_{\Gamma_h}\phi_I,\Delta_{\Gamma_h}\epsilon_h\big)_{\Gamma_h} 
+ l_h(\phi_I)-l(\phi)\\
&-\sum_{e\in \mathcal{E}_h}h_e^{-1} \langle[\nabla_{\Gamma_h} u_h \cdot \bm{n}], [\nabla_{\Gamma_h} \phi_I \cdot \bm{n}]  \rangle_e  \\
&+(\Delta_\gamma \phi,\Delta_\gamma u)_\gamma-(\Delta_{\Gamma_h}\phi_I,\Delta_{\Gamma_h}u_I)_{\Gamma_h}\\
:=&~J_1+J_2+J_3+J_4.
\end{aligned}
$$

By \eqref{eq:Delta-interpolation} and the energy estimate, there holds
\begin{equation} \label{eq:H1-J1}
|J_1|\lesssim h\|\phi\|_{H^3(\gamma)}\triplenorm{\epsilon_h}_h \lesssim h^2\|\phi\|_{H^3(\gamma)} (\|u\|_{H^3(\gamma)} + \|f\|_{L^2(\gamma)}).
\end{equation}
By source estimate \eqref{eq:source-estimate2} and \eqref{eq:H1-Delta}, there holds
\begin{equation} \label{eq:H1-J2-J4}
|J_2|+|J_4|\lesssim h^2 \|\phi\|_{H^3(\gamma)} (\|u\|_{H^3(\gamma)} + \|f\|_{L^2(\gamma)}).
\end{equation}
By jump estimates \eqref{eq:normal-jump} and \eqref{eq:energy-estimate-jump}, we have 
\begin{equation}\label{eq:H1-J3}
\begin{aligned}
|J_3| &\leq \Big( \sum_{e\in \mathcal{E}_h}h_e^{-1}  \| [\nabla_{\Gamma_h} u_h \cdot \bm{n}\|_{L^2(e)}^2 \Big)^{1/2}
\Big( \sum_{e\in \mathcal{E}_h}h_e^{-1}  \| [\nabla_{\Gamma_h} \phi_I \cdot \bm{n}\|_{L^2(e)}^2 \Big)^{1/2} \\
&\lesssim h^2 \|\phi\|_{H^3(\gamma)}( \|u\|_{H^3(\gamma)}+ \|f\|_{L^2(\gamma)}).
\end{aligned}
\end{equation}
Combine \eqref{eq:H1-J1}--\eqref{eq:H1-J3}, we have $|I_3| \lesssim h^2 \|\phi\|_{H^3(\gamma)}( \|u\|_{H^3(\gamma)}+ \|f\|_{L^2(\gamma)})$. Then, combining it with \eqref{eq:H1-I1}--\eqref{eq:H1-I4}, we obtain $|(\Pi_h^c \epsilon_h)^\ell |_{H^1(\gamma)}^2 \lesssim h^2 \|\phi\|_{H^3(\gamma)}( \|u\|_{H^3(\gamma)} + \|f\|_{L^2(\gamma)})$,
which yields 
\begin{equation} \label{eq:H1-Phce}
\|(\Pi_h^c \epsilon_h)^\ell \|_{H^1(\gamma)} \lesssim |(\Pi_h^c \epsilon_h)^\ell |_{H^1(\gamma)} \lesssim h^2( \|u\|_{H^3(\gamma)}+ \|f\|_{L^2(\gamma)})
\end{equation}
by using Poincar\'e inequality and  \eqref{eq:regularity-bound}.

To conclude, applying the property of $H^1$-conforming relative in \eqref{eq:sNZT-relative} gives $$\|\epsilon_h\|_{H_h^1(\Gamma_h)} \lesssim \|\Pi_h^c \epsilon_h \|_{H^1(\Gamma_h)} \lesssim \|(\Pi_h^c \epsilon_h)^\ell \|_{H^1(\gamma)}.$$ Utilizing \eqref{eq:H1-Phce} and the approximation result of $\|u^e - u_I\|_{H_h^1(\Gamma_h)}$ from \eqref{eq:I-L0}, along with the regularity result \eqref{eq:regularity}, completes the proof.
\end{proof}


\begin{remark}[estimate of $\nabla_{\gamma} u$]
In the broken $H^1$ norm error estimate, $\nabla_{\Gamma_h} u_h$ serves as an approximation to $\nabla_{\Gamma_h} u^e$. However, to approximate $\nabla_{\gamma} u$, it is necessary to map $\nabla_{\Gamma_h} u_h$ back to $\gamma$ using the Piola transform, denoted by $\mathscr{P}_{\bm{p}} \nabla_{\Gamma_h} u_h$. By leveraging the norm equivalence property of the Piola transform \eqref{eq:Piola-equivalence}, Lemma \ref{lm:Piola-derivative} ($\mathcal{O}(h^2)$ approximation of the Piola transform), and the broken $H^1$ norm error estimate \eqref{eq:H1-estimate}, we obtain:
\begin{align*}
\|\nabla_\gamma u - \mathscr{P}_{\bm{p}} \nabla_{\Gamma_h} u_h\|_{L^2(\gamma)} &\simeq \|\widebreve{\nabla_\gamma u} - \nabla_{\Gamma_h} u_h\|_{L^2(\Gamma_h)} \\
&\lesssim \|\widebreve{\nabla_\gamma u} - \nabla_{\Gamma_h} u^e\|_{L^2(\Gamma_h)} + \|u^e - u_h\|_{H^1_h(\Gamma_h)} \\
&\lesssim h^2 \|(\nabla_\gamma u)^e\|_{L^2(\Gamma_h)} + h^2 \|f\|_{L^2(\gamma)} \\
&\lesssim h^2 \|f\|_{L^2(\gamma)}.
\end{align*}
It is worth noting that directly approximating $(\nabla_{\gamma} u)^e$ with $\nabla_{\Gamma_h} u_h$ would only yield first-order convergence, as discussed in Remark \ref{rm:Piola-vs-extension}.
\end{remark}

\section{Numerical experiments} \label{sec:numerical}
We perform several numerical experiments to validate the theoretical results, with the implementation based on the software iFEM \cite{chen2008ifem}. The first two model problems, following those in \cite{olshanskii2009finite}, involve equations on a sphere and a torus, while the third problem is posed on a more general, implicitly defined surface. For each case, we refine the surface meshes by subdividing each triangular element into four smaller triangles and projecting the newly created nodes back onto the surface.

To facilitate computation, we report the following errors on a discrete surface:
\begin{equation*}
\begin{aligned}
&E_0 = \|u^e-u_h\|_{L^2(\Gamma_h)},\quad &&E_1=\|\nabla_{\Gamma_h}u^e-\nabla_{\Gamma_h} u_h\|_{L^2(\Gamma_h)}, \\
&E_{\Delta} = \|(\Delta_{\gamma} u)^e-\Delta_{\Gamma_h} u_h\|_{L^2(\Gamma_h)},\quad &&E_{\text{jump}}=\sum_{e\in \mathcal{E}_h}h_e^{-1} \|[\nabla_{\Gamma_h} u_h \cdot \bm{n}]\|_{L^2(e)}^2.
\end{aligned}
\end{equation*}


\subsection{Example 1: Problem on the Unit Sphere}

In the first example, we consider a unit sphere with radius $r = 1$. The source term $f$ is chosen such that the exact solution is $u = r^{-3}(3x^2y - y^3)$. This function $u$ is an eigenfunction of $-\Delta_\gamma$ satisfying
\[
-\Delta_\gamma u = 12u, \quad f = (\Delta_\gamma)^2 u = 144(3x^2y - y^3) \quad \text{on } \gamma.
\]
The errors show that the convergence order is 2 in both the broken $H^1$ norm, as predicted by Theorem \ref{tm:H1-estimate}. Additionally, $E_\Delta$ and $E_{\text{jump}}$ converge at a rate of 1, consistent with Theorem \ref{tm:energy-estimate2}.

\begin{table}[!htbp]
	\centering
	\begin{tabular}{@{}lcccccccc@{}}
		\toprule
		Dof&$E_0$&order&$E_1$&order&$E_\Delta$&order&$E_{\text{jump}}$&order\\
		\midrule
		   486&7.54e-02& &3.14e-01& &2.11e-00& &5.06e-01&  \\
		  1926&1.91e-02& 1.98& 7.96e-02& 1.98 &1.03e-00& 1.03& 2.61e-02& 0.96\\
		  7686&4.78e-03& 2.00& 1.99e-02& 2.00 &5.13e-01& 1.01& 1.31e-02& 0.99\\
		 30726&1.19e-03& 2.00& 4.99e-03& 2.00 &2.56e-01& 1.00& 6.58e-02& 1.00\\
		122886&2.99e-04& 2.00& 1.25e-03& 2.00 &1.28e-01& 1.00& 3.29e-02& 1.00\\
		\bottomrule
	\end{tabular}
	\caption{Error table for surface NZT element approximation on the unit sphere.}
	\label{tab:sphere}
\end{table}

\begin{figure}[!htbp]
	\centering
	\includegraphics[scale=0.6]{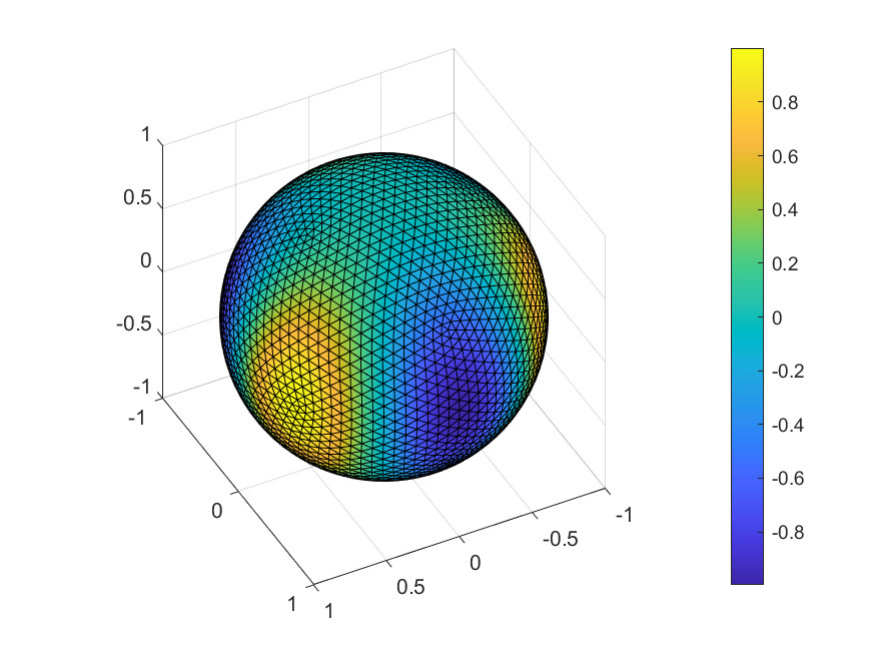}
	\caption{The numerical approximation of $u = r^{-3}(3x^2y - y^3)$ with 7686 DoFs on the unit sphere.}\label{fig:sphere}
\end{figure}

\subsection{Example 2: Problem on a torus}
The second problem is formulated on a torus with a major radius $R = 1$ and a minor radius $r = 0.6$. The toroidal coordinates $\{\rho, \theta, \phi\}$ are defined such that the Cartesian coordinates are given by
$$
x = (R + \rho \cos(\theta)) \cos(\phi), \quad y = (R + \rho \cos(\theta)) \sin(\phi), \quad z = \rho \sin(\theta),
$$
where $0 \leq \theta < 2\pi$, $0 \leq \phi < 2\pi$, and $0 \leq \rho$. The signed distance function of the torus is $d = \rho - r$. The source term $f$ is implemented based on the code in \cite{larsson2017continuous}, and the exact solution for this problem is $u = \sin(3\phi) \cos(3\theta + \phi)$. 

Applying the Laplace-Beltrami operator to $u$, we obtain:
\begin{align*}
-\Delta_\gamma u =& \; r^{-2} 9\sin(3\phi) \cos(3\theta + \phi) \\
& - r^{-1}(R + r \cos(\theta))^{-1} 3 \sin(\theta) \sin(3\phi) \sin(3\theta + \phi) \\
& + (R + r \cos(\theta))^{-2} \big(10 \sin(3\phi) \cos(3\theta + \phi) + 6 \cos(3\phi) \sin(3\theta + \phi)\big) \quad \text{on } \gamma.
\end{align*}

The numerical solution is visualized in Figure \ref{fig:torus}, and the errors are provided in Table \ref{tab:torus}. The observed rates of convergence align well with theoretical expectations.

\begin{table}[!htbp]
	\centering
	\begin{tabular}{@{}lcccccccc@{}}
		\toprule
		Dof&$E_0$&order&$E_1$&order&$E_\Delta$&order&$E_{\text{jump}}$&order\\
		\midrule
		  1536&7.92e-01&     & 4.25e-00&     & 3.99e+01&     & 8.99e-00&     \\
		  6144&2.26e-01& 1.81& 1.49e-00& 1.51& 2.15e+01& 0.89& 6.88e-00& 0.39\\
		 24576&6.88e-02& 1.72& 4.29e-01& 1.80& 1.13e+01& 0.93& 3.91e-00& 0.81\\
		 98304&1.73e-02& 1.99& 1.09e-01& 1.98& 5.83e-00& 0.95& 2.02e-00& 0.95\\
		393216&4.24e-03& 2.02& 2.67e-02& 2.03& 2.96e-00& 0.98& 1.04e-00& 0.96\\
		\bottomrule
	\end{tabular}
	\caption{Error table for surface NZT element approximation on the torus.}
	\label{tab:torus}
\end{table}

\begin{figure}[!htbp]
	\centering
	\includegraphics[scale=0.6]{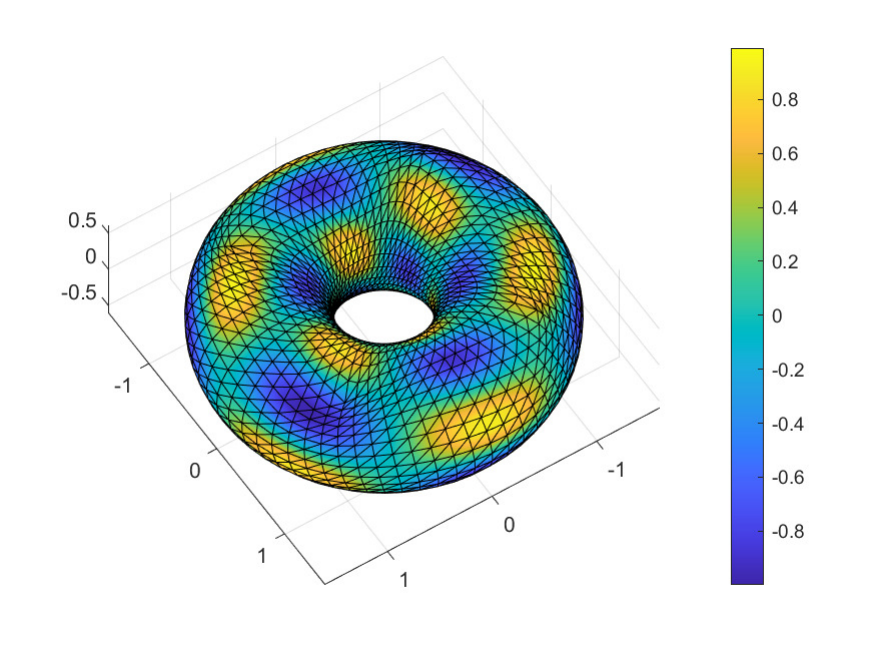}
	\caption{The numerical approximation of $u = \sin(3\phi) \cos(3\theta+\phi)$ with 6144 DoFs on the torus.}\label{fig:torus}
\end{figure}

\subsection{Example 3: Problem on an implicitly defined surface}
The third example, adapted from \cite{dziuk1988finite}, is defined on a general surface implicitly represented by the level set function
$$
\phi(x, y, z) = (x - z^2)^2 + y^2 + z^2 - 1.
$$
The function $f$ is chosen such that the exact solution is $u = y$, and the numerical approximation is illustrated in Figure \ref{fig:gen}. The expressions for $f$ and $\Delta_\gamma u$ are computed exactly using MATLAB's symbolic computation toolbox. In this example, obtaining explicit expressions for $d(x)$, $\bm{p}(x)$, and $\textbf{H}(x)$ is not straightforward. Therefore, the first-order projection algorithm from \cite{demlow2007adaptive} is utilized to iteratively determine $\bm{p}(x)$. To circumvent the computation of $\textbf{H}(x)$, we report the error
$$
E_1^\star = \|\textbf{P}_h(\nabla_{\gamma} u)^e - \nabla_{\Gamma_h} u_h\|_{L^2(\Gamma_h)},
$$
instead of $E_1$. Given that $\nabla_{\Gamma_h} u^e = \textbf{P}_h(\textbf{P} - d \textbf{H})(\nabla_{\gamma} u)^e$, the difference satisfies $|E_1 - E_1^\star| = \mathcal{O}(h^2)$. The corresponding error results are presented in Table \ref{tab:gen}, where the second-order convergence of $E_1^\star$ sufficiently demonstrates the second-order convergence of $E_1$.


\begin{table}[!htbp]
	\centering
	\begin{tabular}{@{}lcccccccc@{}}
		\toprule
		Dof&$E_0$&order&$E_1^\star$&order&$E_\Delta$&order&$E_{\text{jump}}$&order\\
		\midrule
		  3462&5.50e-01&     & 8.17e-01&     & 2.15e-00&     & 1.33e-01&     \\
		 13830&1.88e-01& 1.55& 2.81e-01& 1.54& 1.30e-00& 0.72& 8.04e-02& 0.73\\
		 55302&5.20e-02& 1.85& 7.84e-02& 1.84& 6.99e-01& 0.90& 4.17e-02& 0.95\\
		221190&1.31e-02& 1.99& 1.98e-02& 1.98& 3.52e-01& 0.99& 2.06e-02& 1.02\\
		884742&3.27e-03& 2.01& 4.93e-03& 2.01& 1.76e-01& 1.00& 1.02e-02& 1.02 \\
		\bottomrule
	\end{tabular}
	\caption{Error table for surface NZT element approximation on an implicitly defined surface.}
	\label{tab:gen}
\end{table}

\begin{figure}[!htbp]
	\centering
	\includegraphics[scale=0.6]{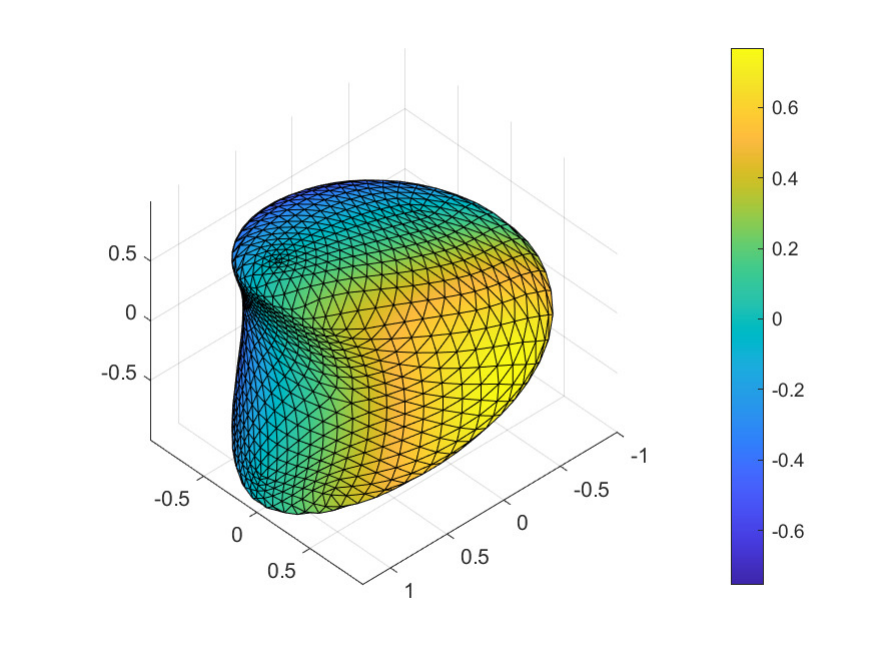}
	\caption{The numerical approximation of $u = y$ with 3462 DoFs on an implicitly defined surface.}
	\label{fig:gen}
\end{figure}


\bibliographystyle{siamplain}
\bibliography{surfaceNZT.bib} 
\end{document}